\documentclass[10pt]{article}
\usepackage[margin=1in]{geometry}
\usepackage{amsmath,amssymb,amsthm,mathtools}
\usepackage{enumitem}
\usepackage{authblk}
\usepackage[hidelinks]{hyperref}
\usepackage{aliascnt}

\numberwithin{equation}{section}

\newtheorem{theorem}{Theorem}[section]

\newaliascnt{lemma}{theorem}
\newtheorem{lemma}[lemma]{Lemma}
\aliascntresetthe{lemma}

\newaliascnt{proposition}{theorem}
\newtheorem{proposition}[proposition]{Proposition}
\aliascntresetthe{proposition}

\newaliascnt{corollary}{theorem}
\newtheorem{corollary}[corollary]{Corollary}
\aliascntresetthe{corollary}

\newaliascnt{conjecture}{theorem}
\newtheorem{conjecture}[conjecture]{Conjecture}
\aliascntresetthe{conjecture}

\newaliascnt{hypothesis}{theorem}

\aliascntresetthe{hypothesis}

\newaliascnt{remark}{theorem}

\aliascntresetthe{remark}

\newaliascnt{definition}{theorem}
\newtheorem{definition}[definition]{Definition}
\aliascntresetthe{definition}

\newaliascnt{claim}{theorem}
\newtheorem{claim}[claim]{Claim}
\aliascntresetthe{claim}

\newtheorem{problem}[theorem]{Problem}

\usepackage[capitalize,nameinlink]{cleveref}
\crefname{theorem}{Theorem}{Theorems}
\Crefname{theorem}{Theorem}{Theorems}
\crefname{lemma}{Lemma}{Lemmas}
\Crefname{lemma}{Lemma}{Lemmas}
\crefname{proposition}{Proposition}{Propositions}
\Crefname{proposition}{Proposition}{Propositions}
\crefname{corollary}{Corollary}{Corollaries}
\Crefname{corollary}{Corollary}{Corollaries}
\crefname{conjecture}{Conjecture}{Conjectures}
\Crefname{conjecture}{Conjecture}{Conjectures}
\crefname{hypothesis}{Hypothesis}{Hypotheses}
\Crefname{hypothesis}{Hypothesis}{Hypotheses}
\crefname{remark}{Remark}{Remarks}
\Crefname{remark}{Remark}{Remarks}
\crefname{section}{Section}{Sections}
\Crefname{section}{Section}{Sections}
\crefname{equation}{equation}{equations}
\Crefname{equation}{Equation}{Equations}
\crefname{definition}{definition}{definitions}
\Crefname{definition}{Definition}{Definitions}
\crefname{claim}{claim}{claims}
\Crefname{claim}{Claim}{Claims}

\newcommand{\abs}[1]{\left|#1\right|}
\newcommand{\norm}[1]{\left\lVert#1\right\rVert}

\title{\Large\bf The spectral radius of $k$-chromatic $r$-graphs}
\author{Xizhi~Liu\thanks{Supported by the Excellent Young Talents Program (Overseas) of the National Natural Science Foundation of China. Email: \texttt{liuxizhi@ustc.edu.cn}} } 
\author{Junchi~Luo\thanks{Email: \texttt{ljc2022@mail.ustc.edu.cn}}}
\affil{\small School of Mathematical Sciences, University of Science and Technology of China, Hefei, China}
\date{\today}
\begin{document}
\maketitle
\begin{abstract}
For an $r$-uniform hypergraph $G$, let $\lambda^{(p)}(G)$ denote its $p$-spectral radius, defined as the maximum of the polyform of $G$ over the unit sphere in the $\ell_p$-norm.
Let $Q_k^r(n)$ be the complete $k$-chromatic $r$-graph on $n$ vertices with color classes as equal as possible.
Kang--Nikiforov--Yuan conjectured that, for every $p\ge1$ and $n>(r-1)k$, the $r$-graph $Q_k^r(n)$ is the unique maximizer of $\lambda^{(p)}$ among all $k$-chromatic $r$-graphs of order $n$.
They also conjectured the corresponding explicit bound
\[
        \lambda^{(p)}(G)
        \le
        r!\left(\tbinom nr-k\tbinom{n/k}{r}\right)n^{-r/p},
\]
with equality only in the divisible extremal case.
The case $r=3$ was established in their work.
This paper resolves the remaining cases $r\ge4$, and hence settles both conjectures for all $r\ge3$.
As a consequence, the same threshold gives an anti-Wilf-type spectral certificate: any $r$-graph of order $n$ whose $p$-spectral radius exceeds the displayed bound has chromatic number at least $k+1$.
\end{abstract}
\section{Introduction}
\label{sec:introduction}

An $r$-uniform hypergraph, or briefly an $r$-graph, is a pair $G=(V,E)$ with $E\subseteq V^{(r)}$, where $V^{(r)}$ denotes the family of all $r$-subsets of $V$.
We identify a hypergraph with its edge set whenever convenient, and write $|G|$ for the number of edges of $G$.
For a vector $x=(x_v)_{v\in V}\in\mathbb R^V$, define the polyform
\[
        P_G(x)
        =
        r!\sum_{e\in G}\prod_{v\in e}x_v .
\]
For a real number $p\ge1$, the $p$-spectral radius of $G$ is
\[
        \lambda^{(p)}(G)
        =
        \max_{\sum_{v\in V}|x_v|^p=1} P_G(x).
\]
Since all coefficients of $P_G$ are nonnegative, replacing $x$ by $(|x_v|)_{v\in V}$ cannot decrease $P_G(x)$.
Thus the maximum in the definition of $\lambda^{(p)}(G)$ may always be taken over nonnegative vectors.

The parameter $\lambda^{(p)}$ forms a useful bridge between spectral and extremal hypergraph theory (see the survey of Nikiforov~\cite{Nikiforov2014}).
When $p=1$, it is the Lagrangian of $G$; when $p=r$, it is the variational spectral radius of $G$; and as $p\to\infty$, it approaches $r!|G|$.
Variational and tensorial approaches to hypergraph eigenvalues were developed in several directions, including the work of Friedman--Wigderson~\cite{FriedmanWigderson1995}, Lim~\cite{Lim2005}, Qi~\cite{Qi2005,Qi2006}, Cooper--Dutle~\cite{CooperDutle2012}, and Pearson--Zhang~\cite{PearsonZhang2014}.
The $p$-spectral radius in the above form was introduced by Keevash--Lenz--Mubayi~\cite{KLM2014} and was developed further by Nikiforov~\cite{Nikiforov2014}.

The study of spectral parameters and graph colorings has a long history.
For ordinary graphs, the Motzkin--Straus theorem identifies the Lagrangian in terms of the clique number~\cite{MotzkinStraus1965}, while Wilf's theorem bounds the chromatic number in terms of the spectral radius~\cite{Wilf1967}.
Further inequalities involving the chromatic number, clique number, and spectral radius were obtained by Cvetkovi{\'c}~\cite{Cvetkovic1972}, Edwards--Elphick~\cite{EdwardsElphick1983}, Nikiforov~\cite{Nikiforov2002, Nikiforov2007}, Feng--Li--Zhang~\cite{FengLiZhang2007}, Wocjan--Elphick~\cite{WocjanElphick2013}, and Elphick--Wocjan~\cite{ElphickWocjan2017}.
For hypergraphs, extremal problems for $t$-partite and $t$-colorable structures were studied by Mubayi--Talbot~\cite{MubayiTalbot2008}, and the $p$-spectral analogue for $k$-partite and $k$-chromatic uniform hypergraphs was investigated by Kang--Nikiforov--Yuan~\cite{KNY2015}.

An $r$-graph is called $k$-chromatic if its vertex set can be partitioned into $k$ classes so that no edge is contained in a single class.
Given nonnegative integers $n_1+\cdots+n_k=n$, let $Q(n_1,\ldots,n_k)$ denote the complete $k$-chromatic $r$-graph with color classes $V_1,\ldots,V_k$, where $|V_i|=n_i$, whose edges are precisely all $r$-sets not contained in a single color class.
Let $Q_k^r(n)$ denote the member for which the color class sizes differ by at most one.
Thus $Q_k^r(n)$ is the balanced complete $k$-chromatic $r$-graph.

Kang--Nikiforov--Yuan~\cite{KNY2015} established the exact $k$-chromatic theorem for $3$-graphs and proposed the following conjecture for general uniformity.

\begin{conjecture}[{\cite[Conjecture~7]{KNY2015}}]
\label{conj:kny}
Let $k\ge2$, $r\ge 4$, and let $G$ be a $k$-chromatic $r$-graph of order $n>(r-1)k$.
For every $p\ge1$,
\[
        \lambda^{(p)}(G)
        <
        \lambda^{(p)}(Q_k^r(n)),
\]
unless $G$ is isomorphic to $Q_k^r(n)$.
\end{conjecture}

They also conjectured the following explicit version.

\begin{conjecture}[{\cite[Conjecture~8]{KNY2015}}]
\label{conj:kny-explicit}
Let $k\ge2$, $r\ge 4$, and let $G$ be a $k$-chromatic $r$-graph of order $n>(r-1)k$.
For every $p\ge1$,
\[
        \lambda^{(p)}(G)
        <
        r!\left(
        \tbinom nr-k\tbinom{n/k}{r}
        \right)n^{-r/p},
\]
unless $k\mid n$ and $G$ is isomorphic to $Q_k^r(n)$.
\end{conjecture}

Note that if $n\le (r-1)k$, then the complete $r$-graph $K_n^r$ is already $k$-chromatic, since its vertices may be partitioned into $k$ classes of size at most $r-1$.
Thus the exact problem in that range reduces to the complete-graph case.

Our first main theorem confirms Conjecture~\ref{conj:kny}.

\begin{theorem}
\label{thm:main}
Let $r\ge4$, $k\ge2$, $p\ge1$, and $n>(r-1)k$.
If $G$ is a $k$-chromatic $r$-graph of order $n$, then
\[
        \lambda^{(p)}(G)
        \le
        \lambda^{(p)}(Q_k^r(n)),
\]
with equality if and only if $G$ is isomorphic to $Q_k^r(n)$.
\end{theorem}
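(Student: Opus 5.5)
Since $\lambda^{(p)}$ is monotone under adding edges (coefficients of $P_G$ are nonnegative), every $k$-chromatic $G$ is a spanning subgraph of some complete $k$-chromatic $Q(n_1,\ldots,n_k)$, so $\lambda^{(p)}(G)\le\lambda^{(p)}(Q(n_1,\ldots,n_k))$. For $p>1$ the maximizing vector of a connected $r$-graph can be taken strictly positive; this holds for $Q(n_1,\ldots,n_k)$ once $n>(r-1)k$ (then some class has size at least $r$ and the hypergraph is connected). Consequently deleting any edge strictly decreases $\lambda^{(p)}$, and equality forces $G=Q(n_1,\ldots,n_k)$. For $p=1$ (the Lagrangian) strictness under edge deletion fails in general, so that case needs a separate argument: I would show directly that every optimal weighting of a proper subgraph can be improved, using that $n>(r-1)k$ guarantees a class of size $\ge r$ whose internal $r$-sets are the only missing edges. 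With this, the problem reduces to proving
\[
        \lambda^{(p)}(Q(n_1,\ldots,n_k))<\lambda^{(p)}(Q_k^r(n))
\]
whenever the $n_i$ are not balanced.

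For the reduction, take a nonnegative maximizer $x$ of $P_{Q}$ with $Q=Q(n_1,\ldots,n_k)$. By symmetry and a convexity/averaging argument (the Lagrange conditions are identical for vertices in the same class, and power-mean arguments handle $p\ge1$), $x$ can be assumed constant on each class, say $x_v=a_i$ on $V_i$. Writing $P_Q$ via $P_{K_n^r}$ minus the class-internal terms gives
\[
        P_Q(x)=r!\Bigl(e_r(x)-\sum_{i}\tbinom{n_i}{r}a_i^r\Bigr),
\]
where $e_r$ is the elementary symmetric polynomial. Suppose $n_1\ge n_2+2$. Move one vertex $u$ from $V_1$ to $V_2$, keeping its weight $a_1$. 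The sum $e_r(x)$ is unchanged, so the change in $P$ is
\[
        r!\Bigl[\tbinom{n_1}{r}-\tbinom{n_1-1}{r}\Bigr]a_1^r\cdot(\text{adjusted})-\cdots,
\]
that is, the edges $u\cup S$ with $S\subseteq V_1\setminus u$ are gained and those with $S\subseteq V_2$ are lost. Hence the move helps iff $\binom{n_1-1}{r-1}a_1^{r-1}>\binom{n_2}{r-1}a_2^{r-1}$, up to the exact weights. So the key step is to show that at the optimum, larger classes do not carry too much smaller weights: specifically $\binom{n_1-1}{r-1}a_1^{r-1}\ge\binom{n_2}{r-1}a_2^{r-1}$, with strict inequality, or strict improvement after re-optimizing.

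I would derive this from the eigen-equations. For $v\in V_i$,
\[
        \partial_{v}P=r!\bigl(e_{r-1}(x_{-v})-\tbinom{n_i-1}{r-1}a_i^{r-1}\bigr)=\mu\, a_i^{p-1}.
\]
Subtracting the equations for classes $1$ and $2$ and expanding $e_{r-1}(x_{-v})$ in terms of $a_i$ yields a relation between $a_1$ and $a_2$. I expect this to be the main obstacle. For $r\ge4$ the resulting polynomial inequality does not factor as nicely as in the $r=3$ case of Kang--Nikiforov--Yuan, and one must show $a_1\le a_2$ together with a quantitative lower bound on $a_1/a_2$ that is strong enough to beat the binomial ratio $\binom{n_2}{r-1}/\binom{n_1-1}{r-1}$. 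My first attempt would be to write $e_{r-1}(x_{-v})-e_{r-1}(x_{-w})=(a_2-a_1)e_{r-2}(x_{-v,-w})$ for $v\in V_1$, $w\in V_2$, and then compare this with the difference of the internal terms. Here the hypothesis $n>(r-1)k$ enters: it guarantees that $e_{r-2}$ of the cross weights dominates the intra-class binomials. Iterating the balancing moves then reaches $Q_k^r(n)$ with strict increase, which also gives the uniqueness claim.
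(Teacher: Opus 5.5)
\paragraph{The main step fails.} Your plan is to show that at an optimal weighting of an unbalanced $Q(\mathbf n)$, moving one vertex from $V_1$ to $V_2$ \emph{with the weights left unchanged} strictly increases $P$. That is, you need $\binom{n_1-1}{r-1}a_1^{r-1}>\binom{n_2}{r-1}a_2^{r-1}$. This is false for $p$ near $1$.

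Take $p=1$, $k=2$, class sizes $(b+2,b)$ with $b$ large, and class weights $\alpha,\beta$. The Lagrange condition reads
\[
 (\beta-\alpha)\,e_{r-2}(U')=\tbinom{b+1}{r-1}\alpha^{r-1}-\tbinom{b-1}{r-1}\beta^{r-1},
\]
where $U'$ is the weight multiset after deleting one vertex of each class. Expanding it shows that the class masses $(b+2)\alpha$ and $b\beta$ agree up to $O(b^{-2})$. Hence $\alpha/\beta=1-2/b+O(b^{-2})$ and $(\alpha/\beta)^{r-1}=1-2(r-1)/b+O(b^{-2})$. The move, however, needs $(\alpha/\beta)^{r-1}>\binom{b}{r-1}/\binom{b+1}{r-1}=1-(r-1)/(b+1)$. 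So the fixed-weight transfer strictly \emph{decreases} $P$ there. The same leading-order expansion of the eigen-equations shows this persists for $p$ slightly above $1$.

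This tuple satisfies all the eigen-equations. So no manipulation of them, applied along your iteration, can produce the inequality you want. In fact the reverse inequality is exactly the necessary condition the paper extracts at an extremal tuple (\cref{lem:one-vertex-transfer-pgreater,lem:pone-necessary}).

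\paragraph{What is missing.} Your scheme does go through for $p\ge2$, but only with an input you do not mention. That input is the $p$-mass ordering $n_1a_1^p\ge n_2a_2^p$ (\cref{lem:p-mass-order}), which the paper proves by a mass-switching argument, not by subtracting eigen-equations. It gives $a_1/a_2\ge(n_2/n_1)^{1/p}>n_2/(n_1-1)$ when $n_1-n_2\ge2$ and $p\ge2$, which is essentially (S3).

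For smaller $p$ the test vector itself must change. The paper moves a vertex between the extreme classes while redistributing mass:
\begin{itemize}
\item For $p>1$, it preserves the $p$-masses of a $(b+2,b)$ block pair. It uses (S1)--(S2) to get $\rho\ge(m+1)/(m+2)$, then majorization and Schur-concavity of $e_s(z^{1/p})$ for the full and one-sided layers, and the stop-loss/Descartes argument for the pure cross layer.
\item For $p=1$, it spreads each class's $\ell_1$-mass uniformly and proves \cref{lem:pone-mixed}.
\end{itemize}
Your fallback ``or strict improvement after re-optimizing'' is precisely this hard part, and it is left unaddressed.

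\paragraph{A smaller point.} Positivity of maximizers does not follow from connectivity when $r\ge3$ and $1<p<r$. What you need is \cref{lem:quoted-complete-chromatic-positivity}: every nonnegative eigenvector of $Q(n_1,\ldots,n_k)$ is positive. That holds for all $p\ge1$, so it also gives strictness at $p=1$ without a separate argument.
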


We also prove the following evaluation bound for the conjectured extremal graph, which combined with Theorem~\ref{thm:main}, confirms  Conjecture~\ref{conj:kny-explicit} for all $r\ge4$.

\begin{theorem}
\label{thm:evaluation-Qkr}
Let $r\ge4$, $k\ge2$, $n>(r-1)k$, and $p\ge1$.
Then
\[
        \lambda^{(p)}(Q_k^r(n))
        \le
        r!\left(
        \binom nr-k\binom{n/k}{r}
        \right)n^{-r/p}.
\]
If $k\mid n$, equality holds.
If $k\nmid n$, the inequality is strict.
\end{theorem}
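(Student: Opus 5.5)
The plan is to reduce to the Lagrangian ($p=1$) case by Hölder's inequality, and then to evaluate the Lagrangian of $Q_k^r(n)$ by symmetrization. Write $Q=Q(n_1,\dots,n_k)$ with $n_i\in\{\lfloor n/k\rfloor,\lceil n/k\rceil\}$. For $t\ge0$, read $\binom{t}{r}$ as $t(t-1)\cdots(t-r+1)/r!$.

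\textbf{Step 1: reduction to $p=1$.} Since $P_Q$ is homogeneous of degree $r$, every nonnegative $x$ with $\sum_v x_v^p=1$ satisfies
\[
P_Q(x)\le \lambda^{(1)}(Q)\,\|x\|_1^r .
\]
Hölder gives $\|x\|_1\le n^{1-1/p}$. Hence $\lambda^{(p)}(Q)\le \lambda^{(1)}(Q)\,n^{r-r/p}$, and it suffices to prove
\[
\lambda^{(1)}(Q)\le r!\Bigl(\tbinom nr-k\tbinom{n/k}{r}\Bigr)n^{-r}.
\]
Equality in the Hölder step forces $x$ to be constant. If $k\mid n$, the constant vector $x\equiv n^{-1/p}$ gives $P_Q(x)=r!\,|Q|\,n^{-r/p}$, and $|Q|=\binom nr-k\binom{n/k}r$. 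So once the displayed inequality is proved, equality holds in the divisible case.

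\textbf{Step 2: averaging inside a class.} Take $u,v$ in the same class $V_i$. Then $P_Q(x)=a\,x_ux_v+b\,(x_u+x_v)+c$, where $a,b,c$ do not depend on $x_u,x_v$. The coefficient $a$ counts edges through $\{u,v\}$ with the remaining $r-2$ vertices not all in $V_i$, so $a\ge0$. Replacing $x_u,x_v$ by their average keeps $\|x\|_1$ fixed and does not decrease $P_Q$. Iterating this, or passing to a limit, the Lagrangian is attained at a vector that equals $t_i$ on $V_i$.

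For such a vector,
\[
P_Q(x)/r!=e_r(x)-\sum_i \tbinom{n_i}{r}t_i^r ,
\]
where $e_r(x)$ is the $r$-th elementary symmetric function of the $n$ entries of $x$. This is a polynomial in $k$ variables with the constraint $\sum_i n_it_i=1$.

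\textbf{Step 3: main obstacle.} I must show this $k$-variable function is at most $\bigl(\binom nr-k\binom{n/k}r\bigr)n^{-r}$, with strict inequality when $k\nmid n$.

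The first attempt is a two-class exchange. Fix all classes except $i$ and $j$, and study the function of $(t_i,t_j)$ on the segment $n_it_i+n_jt_j=\text{const}$. I will try to show it is maximized when $t_i=t_j$. The hypothesis $n>(r-1)k$, so that every $n_i\ge r-1$, should be exactly what controls the negative term $\binom{n_i}{r}t_i^r$. This would give $t_1=\dots=t_k=1/n$, and then $\lambda^{(1)}(Q)=r!\,|Q|\,n^{-r}$.

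It then remains to compare $|Q|=\binom nr-\sum_i\binom{n_i}{r}$ with $\binom nr-k\binom{n/k}{r}$. The map $t\mapsto\binom tr$ is strictly convex on $t\ge r-1$, and all $n_i$ and $n/k$ lie in that range. Jensen's inequality therefore gives $\sum_i\binom{n_i}r\ge k\binom{n/k}r$, with strict inequality exactly when the $n_i$ are not all equal, that is, when $k\nmid n$. This yields both the inequality and the strictness claim.

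If the two-class exchange fails to hold globally, the fallback is a Lagrange-multiplier argument. At an interior critical point, the relations between the $t_i$ would be compared using $\partial/\partial t_i$ of the class-constant form. I would show that any solution with unequal $t_i$ gives a smaller value than the uniform point, again using $n_i\ge r-1$. Boundary points, where some $t_i=0$, would be handled by induction on $k$: such a point is a Lagrangian point of a graph with fewer classes.
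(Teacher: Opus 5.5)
The gap is in Step 3, in the case $k\nmid n$: your claim that the class-constant form is maximized at $t_1=\cdots=t_k=1/n$, so that $\lambda^{(1)}(Q)=r!\,|Q|\,n^{-r}$, is false. Steps 1 and 2 are fine: $\lambda^{(p)}(Q)\le\lambda^{(1)}(Q)\,n^{r-r/p}$ is a valid alternative to the paper's H\"older reduction, and averaging inside a colour class is correct. Averaging between two classes of the \emph{same} size also works, as the paper shows, so your plan can be completed when $k\mid n$.

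At the uniform vector $w=1/n$, a vertex $u\in V_i$ has $\partial P_Q/\partial x_u=r!\,(\binom{n-1}{r-1}-\binom{n_i-1}{r-1})\,w^{r-1}$. This is strictly smaller on classes of size $q+1$ than on classes of size $q$. So the uniform vector is not even a critical point on the simplex, and shifting mass from the larger classes to the smaller ones strictly increases $P_Q$. Concretely, take $r=4$, $k=2$, $n=7$, with class sizes $4,3$ and weights $a,b$. Then $P_Q/4!=4ab^3+18a^2b^2+12a^3b$, which equals $34/7^4\approx 0.01416$ at $a=b$ but is about $0.01419$ at $b/a=1.1$.

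Hence $\lambda^{(1)}(Q_k^r(n))>r!\,|Q_k^r(n)|\,n^{-r}$ whenever $k\nmid n$. This has three consequences for your plan. The two-class exchange to $t_i=t_j$ cannot hold for classes of different sizes. Your fallback, that unequal critical points give less than the uniform point, is also false. And the Jensen inequality $|Q|<\binom nr-k\binom{n/k}r$ by itself says nothing about $\lambda^{(1)}$.

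The missing idea is to bound the true maximum, which is attained at unequal weights. You must show that its gain over the uniform value is smaller than the Jensen slack $\bigl(\binom nr-k\binom{n/k}r-|Q|\bigr)n^{-r}$. The paper does this in four steps:
\begin{enumerate}
\item It averages only among equal-size classes, reducing to a one-variable function $F(x)$ of the total mass on the larger classes.
\item It localizes every maximizer between the equal-vertex-weight point $\xi$ and the equal-class-mass point $\tau$.
\item It proves positivity of the resulting degree-$r$ polynomial $P(s)$ on $[0,1]$ through its Bernstein coefficients. The coefficients $\Gamma_0,\dots,\Gamma_{r-1}$ are positive by Maclaurin's inequality and convexity of $\binom yr y^{-m}$.
\item For $\Gamma_r=P(1)>0$ it proves a delicate endpoint inequality at equal class masses, $D_{\mathrm{all}}>D_{\mathrm{mono}}$. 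This combines a Maclaurin stability estimate with a second-derivative bound on $\binom yr y^{-r}$.
\end{enumerate}
Nothing in your proposal plays this role, so the non-divisible case remains unproved, both the inequality and its strictness.
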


As an immediate consequence, the explicit bound gives an anti-Wilf-type spectral coloring criterion for uniform hypergraphs.

\begin{corollary}
\label{cor:spectral-coloring-criterion}
Let $r\ge4$, $k\ge2$, $p\ge1$, and $n>(r-1)k$.
Let $G$ be an $r$-graph of order $n$.
If
\[
        \lambda^{(p)}(G)
        >
        r!\left(
        \tbinom nr-k\tbinom{n/k}{r}
        \right)n^{-r/p},
\]
then $G$ is not $k$-chromatic. Equivalently, $\chi(G)\ge k+1$. 
\end{corollary}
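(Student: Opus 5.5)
The corollary follows by contraposition from Theorem~\ref{thm:main} and Theorem~\ref{thm:evaluation-Qkr}, and I would write it exactly that way.

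Suppose, for contradiction, that $G$ is an $r$-graph of order $n$ satisfying the displayed inequality but is $k$-chromatic. Since $r\ge4$, $k\ge2$, $p\ge1$ and $n>(r-1)k$, the hypotheses of Theorem~\ref{thm:main} hold. That theorem gives $\lambda^{(p)}(G)\le\lambda^{(p)}(Q_k^r(n))$. The same parameter ranges are exactly those of Theorem~\ref{thm:evaluation-Qkr}, which gives
\[
        \lambda^{(p)}(Q_k^r(n))\le r!\left(\tbinom nr-k\tbinom{n/k}{r}\right)n^{-r/p}.
\]
Chaining the two inequalities yields $\lambda^{(p)}(G)\le r!\left(\tbinom nr-k\tbinom{n/k}{r}\right)n^{-r/p}$. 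This contradicts the hypothesis, so $G$ is not $k$-chromatic.

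For the equivalence with $\chi(G)\ge k+1$, I would note that $\chi(G)\le k$ already makes $G$ $k$-chromatic in the paper's sense. Given a proper coloring with at most $k$ classes, we may pad it with empty classes; the definition only asks for a partition into $k$ classes with no edge inside a single class, and empty classes are harmless. Alternatively, if one insists on nonempty classes, we can split a class of size at least two, which is possible because $n>(r-1)k\ge k$. Splitting cannot create a monochromatic edge.

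Two points need care rather than real work. The binomial $\tbinom{n/k}{r}$ should be read as the polynomial $\frac{1}{r!}\prod_{i=0}^{r-1}(n/k-i)$ when $k\nmid n$, consistent with how it appears in Theorem~\ref{thm:evaluation-Qkr}. The other is the padding/splitting remark above. Beyond these, no obstacle remains, since all the substance lies in the two theorems.
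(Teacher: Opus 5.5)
Your proposal is correct and is essentially the paper's argument: the paper states the corollary as an immediate consequence of chaining Theorem~\ref{thm:main} with Theorem~\ref{thm:evaluation-Qkr} and taking the contrapositive, exactly as you do. Your remarks on padding with empty classes or splitting classes, which identify ``not $k$-chromatic'' with $\chi(G)\ge k+1$, are a harmless clarification that the paper leaves implicit.
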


We briefly describe the proof.
First, by passing to the complete $k$-chromatic graph on the same color classes, it is enough to solve the complete problem; strict monotonicity inside complete $k$-chromatic graphs then gives the equality case.
The proof for $p>1$ uses the differentiability of the $\ell_p$-sphere.
In an extremal complete $k$-chromatic graph, a positive eigenvector is constant on each color class.
After ordering the classes by size, we prove that the corresponding class values are ordered in the opposite direction and that the class $p$-masses satisfy sharp one-sided inequalities.
These structural facts allow us to perform an exact local smoothing operation: if two extreme classes have sizes differing by at least two, then one can move one vertex from the larger class to the smaller class and redistribute the relevant local $p$-masses so that the polyform strictly increases.
This contradicts extremality and forces the color classes to be balanced.

The endpoint $p=1$ is handled separately.
Here the Lagrangian constraint is linear, and the previous $p>1$ smoothing argument is no longer available.
Instead we prove a strict two-class balancing inequality.
If two color classes have sizes $a$ and $b$ with $a\ge b+2$, then moving one vertex from the larger class to the smaller one and redistributing the total $\ell_1$-mass of these two classes uniformly increases the mixed two-class contribution, while all external contributions do not decrease by Schur-concavity.
This again forces the extremal complete $k$-chromatic graph to have balanced color classes.

Finally, Theorem~\ref{thm:evaluation-Qkr} is proved by a separate argument.
The case $p>1$ follows from a H{\"o}lder reduction once the $p=1$ estimate is known.
The Lagrangian estimate for $Q_k^r(n)$ is reduced to a one-variable optimization depending on the total mass placed on the larger color classes.
The maximum is localized to an interval between the equal-class-mass point and the equal-vertex-mass point, and the desired inequality is then proved by a Bernstein-coefficient argument together with a sharp endpoint estimate.

The paper is organized as follows.
Section~\ref{sec:Preliminaries} collects the spectral tools and quoted results used later.
Section~\ref{sec:structural} derives structural consequences for extremal complete $k$-chromatic graphs when $p>1$.
Section~\ref{sec:smoothing} proves Theorem~\ref{thm:main} in the case $p>1$, using the local gap-two smoothing argument.
Section~\ref{sec:pone} proves Theorem~\ref{thm:main} at the Lagrangian endpoint $p=1$.
Section~\ref{sec:evaluation-Qkr} proves Theorem~\ref{thm:evaluation-Qkr}.
Section~\ref{sec:concluding} records several related open directions.

\section{Preliminaries}
\label{sec:Preliminaries}

We collect here the standard facts on $\lambda^{(p)}$ and the known $k$-chromatic results which will be used throughout the proof.
Throughout the paper we use the convention
\[
        \tbinom{x}{s}
        =
        \tfrac{x(x-1)\cdots(x-s+1)}{s!}
\]
when $x$ is real and $s$ is a nonnegative integer, and $\binom ms=0$ for integers $m<s$.


For a vertex $u\in V(G)$, the link of $u$ in $G$ is
\[
        \mathcal L_G(u)
        =
        \{S\subseteq V(G)\setminus\{u\}: |S|=r-1,\ S\cup\{u\}\in G\}.
\]
The following basic facts are standard; see Nikiforov~\cite{Nikiforov2014} and Kang--Nikiforov--Yuan~\cite{KNY2015}.

\begin{lemma}[{\cite[Section~1.3 and Propositions~2.2--2.3]{Nikiforov2014}}]
\label{lem:quoted-basic-variational}
Let $p\ge 1$ be a real number, and let $G$ be an $r$-graph.
\begin{enumerate}[label=(\roman*)]
    \item The maximum defining $\lambda^{(p)}(G)$ is attained, and it is
    attained by a nonnegative vector.
    \item For every real vector $x$, $P_G(x)\le \lambda^{(p)}(G)\|x\|_p^r$. 
    \item If $H\subseteq G$, then $\lambda^{(p)}(H)\le \lambda^{(p)}(G)$. 
\end{enumerate}
\end{lemma}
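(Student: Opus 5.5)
Each part follows directly from the definitions, using compactness, homogeneity and the nonnegativity of the coefficients of $P_G$. I will treat the three parts in order.

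For (i), I will use that the set $S_p=\{x\in\mathbb R^V:\sum_v|x_v|^p=1\}$ is closed and bounded in the finite-dimensional space $\mathbb R^V$, hence compact. The polyform $P_G$ is a polynomial and therefore continuous, so it attains its maximum on $S_p$. For nonnegativity, take any maximizer $x$ and set $y=(|x_v|)_{v\in V}$. Then $y\in S_p$, and since every coefficient of $P_G$ is nonnegative, the triangle inequality gives
\[
P_G(y)=r!\sum_{e\in G}\prod_{v\in e}|x_v|\ge \Bigl|r!\sum_{e\in G}\prod_{v\in e}x_v\Bigr|\ge P_G(x).
\]
Thus $y$ is also a maximizer, and it is nonnegative.

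For (ii), the key fact is that $P_G$ is homogeneous of degree $r$, i.e.\ $P_G(tx)=t^rP_G(x)$ for real $t$, because each monomial has exactly $r$ factors. If $x=0$, both sides vanish. Otherwise set $y=x/\|x\|_p\in S_p$. Then
\[
P_G(x)=\|x\|_p^r\,P_G(y)\le \|x\|_p^r\,\lambda^{(p)}(G),
\]
by the definition of $\lambda^{(p)}(G)$ as a maximum over $S_p$.

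For (iii), I first reduce to the case $V(H)=V(G)$. If $V(H)\subseteq V(G)$, adding the missing vertices as isolated vertices changes neither the polyform on vectors extended by zeros nor the maximum; a vector supported on $V(H)$ has the same $\ell_p$-norm and the same polyform value in either setting. By (i), choose a nonnegative maximizer $x$ for $H$. Since $H\subseteq G$, the sum defining $P_G(x)$ contains all terms of $P_H(x)$ plus further terms that are nonnegative because $x\ge 0$. Hence $\lambda^{(p)}(H)=P_H(x)\le P_G(x)\le\lambda^{(p)}(G)$.

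I expect no real obstacle. The only point needing care is (iii): one must pass to a nonnegative maximizer before comparing $P_H$ with $P_G$, since for vectors with mixed signs the extra edges of $G$ could contribute negatively. One must also handle the vertex-set convention by zero extension.
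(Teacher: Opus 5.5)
Your proof is correct and is the standard argument. The paper does not reprove this lemma; it cites Nikiforov. Its only in-text justification is the remark in the introduction that replacing $x$ by $(|x_v|)_{v\in V}$ cannot decrease $P_G(x)$ because all coefficients are nonnegative, which is exactly your step in (i). Parts (ii) and (iii) then follow, as you do, by degree-$r$ homogeneity and by comparing $P_H$ with $P_G$ at a zero-extended nonnegative maximizer for $H$. Only that zero-extension direction, $\lambda^{(p)}(H)\le P_G(x)$, is actually needed in (iii), and it is precisely what your justification establishes.
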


For $p>1$, eigenvectors satisfy the usual Lagrange multiplier equations.

\begin{lemma}[{\cite[Theorem~3.1]{Nikiforov2014}}]
\label{lem:quoted-eigenequations}
Let $p>1$ be a real number, and let $G$ be an $r$-graph.
If $x=(x_v)_{v\in V(G)}$ is an eigenvector to $\lambda^{(p)}(G)$, normalized by $\sum_v |x_v|^p=1$, then for every $u\in V(G)$,
\[
        \lambda^{(p)}(G)x_u |x_u|^{p-2}
        =
        \frac1r\,\frac{\partial P_G(x)}{\partial x_u}.
\]
In particular, when $x\ge 0$,
\[
        \lambda^{(p)}(G)x_u^{p-1}
        =
        (r-1)!
        \sum_{S\in\mathcal L_G(u)}\prod_{v\in S}x_v .
\]
\end{lemma}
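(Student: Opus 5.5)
The statement in question is Lemma~\ref{lem:quoted-eigenequations}, the Lagrange multiplier equations for $p>1$. I would prove it by a direct constrained-optimization argument.

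\textbf{Setup.} Put
\[
        f(x)=\sum_v |x_v|^p .
\]
Since $p>1$, $f$ is continuously differentiable on $\mathbb R^V$, with
\[
        \partial f/\partial x_u = p\,x_u|x_u|^{p-2},
\]
interpreted as $0$ when $x_u=0$. Its gradient vanishes only at the origin, so on the sphere $f=1$ the gradient is nonzero. The polyform $P_G$ is a polynomial and hence smooth. Let $x$ be a maximizer of $P_G$ on this sphere.

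\textbf{Lagrange multipliers.} Because the constraint gradient is nonzero at $x$, the Lagrange multiplier theorem gives a scalar $\mu$ with
\[
        \partial P_G/\partial x_u = \mu\, p\, x_u|x_u|^{p-2}
        \qquad\text{for every } u.
\]
To find $\mu$, multiply the $u$-th equation by $x_u$ and sum over $u$.

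\textbf{Identifying the multiplier.} $P_G$ is homogeneous of degree $r$, so Euler's identity gives
\[
        \sum_u x_u\,\partial P_G/\partial x_u = r\,P_G(x) = r\,\lambda^{(p)}(G).
\]
The right-hand side of the summed equations is $\mu p\sum_u |x_u|^p = \mu p$. Hence $\mu p = r\lambda^{(p)}(G)$. Substituting back yields
\[
        \lambda^{(p)}(G)\,x_u|x_u|^{p-2} = \tfrac1r\,\partial P_G/\partial x_u ,
\]
which is the first claim.

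\textbf{The nonnegative case.} For $x\ge0$ we have $x_u|x_u|^{p-2}=x_u^{p-1}$. Differentiating
\[
        P_G(x) = r!\sum_{e\in G}\prod_{v\in e}x_v
\]
with respect to $x_u$ keeps only the edges containing $u$, and each such edge contributes $r!\prod_{v\in S}x_v$ with $S=e\setminus\{u\}\in\mathcal L_G(u)$. Dividing by $r$ turns $r!$ into $(r-1)!$, which gives the second displayed identity.

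\textbf{Main point of care.} The only delicate step is the differentiability of the constraint at coordinates where $x_u=0$. When $1<p<2$ the factor $|x_u|^{p-2}$ blows up there, but the product $x_u|x_u|^{p-2}$ is still continuous and equal to $0$. So $f$ is $C^1$ for every $p>1$, and the multiplier theorem applies. This is precisely where $p>1$ is needed; at $p=1$ the function $f$ is not differentiable at coordinates equal to $0$, and the argument fails.
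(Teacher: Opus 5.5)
Your proof is correct and matches the standard argument behind this lemma; the paper gives no proof of its own and simply cites Nikiforov's Theorem~3.1, which is proved the same way: Lagrange multipliers for the $C^1$ constraint $\sum_v|x_v|^p=1$, with the multiplier identified as $\mu=r\lambda^{(p)}(G)/p$ via Euler's identity for the degree-$r$ homogeneous form $P_G$. Your observation that $x_u|x_u|^{p-2}$ extends continuously by $0$ at $x_u=0$ is exactly where $p>1$ is needed, since it makes the constraint $C^1$ with nonvanishing gradient on the sphere.
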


We shall also use the standard symmetry principle for equivalent vertices.

\begin{lemma}[{\cite[Lemma~10]{KNY2015}}]
\label{lem:quoted-symmetric-vertices}
Let $p>1$ be a real number, and let $G$ be an $r$-graph with at least one edge.
Suppose that the transposition of two vertices $u,v\in V(G)$ is an automorphism of $G$.
If $x\ge 0$ is an eigenvector to $\lambda^{(p)}(G)$, then $x_u=x_v$. 
\end{lemma}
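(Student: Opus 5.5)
The claim to prove is \cref{lem:quoted-symmetric-vertices}: if the transposition $\sigma=(u\,v)$ is an automorphism of $G$, then $x_u=x_v$ for every nonnegative eigenvector $x$. We argue by contradiction. Suppose $x\ge 0$ is an eigenvector with $x_u\neq x_v$, normalized by $\sum_w x_w^p=1$, and set $\lambda=\lambda^{(p)}(G)>0$; positivity holds because $G$ has an edge and one can place equal mass on the vertices of that edge. Decompose the polyform according to how an edge meets $\{u,v\}$:
\[
P_G(x)=A+x_uB_u+x_vB_v+x_ux_vC,
\]
where $A$ collects edges avoiding $u,v$, the term $x_uB_u$ collects edges containing $u$ but not $v$, the term $x_vB_v$ collects edges containing $v$ but not $u$, and $C$ collects edges containing both. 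None of $A,B_u,B_v,C$ depends on $x_u$ or $x_v$. Since $\sigma$ is an automorphism, $S\cup\{u\}\in G$ if and only if $S\cup\{v\}\in G$ for $S$ avoiding $u,v$, so $B_u=B_v=:B$. Hence
\[
P_G(x)=A+(x_u+x_v)B+x_ux_vC,\qquad B,C\ge 0.
\]

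Now replace $x_u,x_v$ by the power mean $m=\bigl((x_u^p+x_v^p)/2\bigr)^{1/p}$ and call the new vector $y$. Then $\|y\|_p=1$. For $p>1$ the power-mean inequality gives $2m>x_u+x_v$ strictly when $x_u\ne x_v$, and $m^2\ge\bigl((x_u+x_v)/2\bigr)^2\ge x_ux_v$. Consequently $P_G(y)\ge P_G(x)=\lambda$, and maximality forces equality. If $B>0$, the strict inequality already gives a contradiction.

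The main obstacle is the case $B=0$, where only the $C$ term may help. Here I would use the eigenequations of \cref{lem:quoted-eigenequations} instead. The equation at $u$ reads $\lambda x_u^{p-1}=\frac1r\partial_uP=\frac1r(B+x_vC)=\frac1r x_vC$, and symmetrically $\lambda x_v^{p-1}=\frac1r x_uC$. If $C=0$, then $x_u=x_v=0$, a contradiction. Otherwise, suppose $x_u>x_v$. Then $x_v>0$, since $x_v=0$ would force $x_u=0$; hence both coordinates are positive. Dividing the two equations gives $(x_u/x_v)^{p-1}=x_v/x_u$, that is, $(x_u/x_v)^p=1$, so $x_u=x_v$, a contradiction.

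In fact this eigenequation argument works for general $B$ and avoids the case split altogether. Subtracting the two equations gives
\[
\lambda\bigl(x_u^{p-1}-x_v^{p-1}\bigr)=\frac{C}{r}\,(x_v-x_u).
\]
For $p>1$ the left side has the sign of $x_u-x_v$, while the right side has the opposite sign or vanishes. Since $\lambda>0$, both sides must be $0$, so $x_u=x_v$. This is the proof I would write; the key points are the identity $B_u=B_v$ coming from the automorphism and the strict monotonicity of $t\mapsto t^{p-1}$ for $p>1$.
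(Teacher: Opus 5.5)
Your proof is correct. The paper gives no argument of its own for this lemma; it is quoted from \cite{KNY2015}. So there is no in-paper proof to compare against, and the eigenequation argument in your last paragraph is complete as it stands. The automorphism gives $B_u=B_v$. The eigenequations of \cref{lem:quoted-eigenequations} then give $\lambda\bigl(x_u^{p-1}-x_v^{p-1}\bigr)=\frac{C}{r}(x_v-x_u)$ with $C\ge 0$. Finally, $\lambda>0$ (from the existence of an edge) together with strict monotonicity of $t\mapsto t^{p-1}$ on $[0,\infty)$ forces $x_u=x_v$.

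One correction to your side discussion: the case $B=0$ is not actually an obstacle for the power-mean route. If $x_u\ne x_v$, then $m^2\ge\bigl(\tfrac{x_u+x_v}{2}\bigr)^2>x_ux_v$ strictly, so $P_G(y)>P_G(x)$ as soon as $B>0$ or $C>0$. If $B=C=0$, then $P_G(x)=A=\lambda>0$ does not involve $x_u,x_v$. Setting $x_u=x_v=0$ and rescaling the remaining coordinates back to unit $p$-norm would then multiply $P_G$ by a factor larger than $1$, which is impossible. This gives a purely variational proof that avoids the Lagrange-multiplier lemma, and hence the differentiability of the $\ell_p$-sphere. Your eigenequation proof is shorter once \cref{lem:quoted-eigenequations} is available. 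Both routes use $p>1$ in an essential way, as they must. For two $r$-edges sharing $r-1$ vertices, the polyform depends on the two private vertices only through the sum of their weights. So at $p=1$ these equivalent vertices can receive unequal weights in a maximizer.
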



We next quote the two structural facts about complete $k$-chromatic $r$-graphs that are used repeatedly in the reduction from arbitrary $k$-chromatic graphs to complete ones.

\begin{lemma}[{\cite[Propositions~13--14]{KNY2015}}]
\label{lem:quoted-complete-chromatic-positivity}
Let $r\ge 3$, $k\ge 2$, and let $Q$ be a complete $k$-chromatic $r$-graph with at least one edge.
Let $p\ge 1$.
Every nonnegative eigenvector to $\lambda^{(p)}(Q)$ is positive.
Moreover, it is constant on each nonempty colour class.
\end{lemma}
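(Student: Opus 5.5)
We prove the two assertions (positivity, then constancy on colour classes) separately. In both, we compare a nonnegative eigenvector $x$ with a perturbation that changes only two coordinates. We use throughout that $P_Q$ is multilinear, so it has degree at most one in each variable. Since $Q$ has an edge, $\lambda^{(p)}(Q)>0$. Hence the support $U=\{v:x_v>0\}$ contains an edge $e$ of $Q$, that is, an $r$-set meeting at least two colour classes.

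\emph{Positivity, $p>1$.} Suppose $x_u=0$. By \cref{lem:quoted-eigenequations}, $\sum_{S\in\mathcal L_Q(u)}\prod_{v\in S}x_v=0$. Note $u\notin e$, and $e$ meets at least two classes. Since $r\ge3$, we can delete one vertex of $e$ so that the remaining $(r-1)$-set $S\subseteq U$ is not contained in the class of $u$. Then $S\cup\{u\}$ is an edge of the complete $k$-chromatic graph $Q$, and $\prod_{v\in S}x_v>0$, a contradiction. (Equivalently, raising $x_u$ to $\varepsilon$ gains order $\varepsilon$ in $P_Q$ but costs only $\varepsilon^p=o(\varepsilon)$ in the norm.)

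\emph{Positivity, $p=1$.} This is the main obstacle, since the constraint is now linear and the $\varepsilon^p$ trick fails. We use a mass-splitting argument instead. Suppose $x_u=0$ and pick $w\in U$, taking $w$ in the class of $u$ if that class meets $U$. Write
\[
P_Q(x)=A\,x_w+B\,x_u+C\,x_ux_w+D,
\]
where $A,B,C,D$ do not depend on $x_u,x_w$. Move mass $t=x_w/2$ from $w$ to $u$. The $\ell_1$-norm is preserved, and the change in $P_Q$ is
\[
t(B-A)+C\,t(x_w-t).
\]
We claim $B\ge A$ and $C>0$, so the change is strictly positive, contradicting maximality.

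\begin{itemize}
\item If $w$ lies in the class of $u$, then $u,w$ are twins, so $A=B$.
\item Otherwise the class of $u$ misses $U$ entirely. Then every $(r-1)$-set $S\subseteq U\setminus\{w\}$ with $S\cup\{w\}\in Q$ also satisfies $S\cup\{u\}\in Q$, because $S\cup\{u\}$ cannot be monochromatic. Terms with $S\not\subseteq U$ vanish, so $B\ge A$.
\item For $C$, we need an $(r-2)$-set $T\subseteq U$ with $T\cup\{u,w\}\in Q$. Take $T$ to be $e$ minus two suitably chosen vertices (avoiding $w$), so that $T\cup\{u,w\}$ is not monochromatic. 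This is possible because $e$ meets two classes and, in the second case, $u$'s class is disjoint from $U$. Hence $C>0$.
\end{itemize}

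\emph{Constancy on classes.} For $p>1$, two vertices $u,w$ in the same colour class are twins, so their transposition is an automorphism of $Q$. \cref{lem:quoted-symmetric-vertices} then gives $x_u=x_w$.

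For $p=1$, we already know $x>0$. Replace $x_u,x_w$ by their average. This preserves the $\ell_1$-norm. The linear part is unchanged because $A=B$ for twins. The term $Cx_ux_w$ strictly increases unless $x_u=x_w$, since $C>0$ by the same choice of $T$ as above, now with all coordinates positive. So maximality forces $x_u=x_w$.
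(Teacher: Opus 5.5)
Your proof is correct. There is nothing in the paper to compare it with: the paper gives no argument for this lemma and simply imports it from Kang--Nikiforov--Yuan (Propositions~13--14). Your write-up is therefore a self-contained replacement for that citation.

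For $p>1$ you use only the eigenequations (\cref{lem:quoted-eigenequations}) and the twin-symmetry lemma (\cref{lem:quoted-symmetric-vertices}). For $p=1$ you use a direct two-coordinate perturbation based on the multilinearity of $P_Q$. The decomposition $P_Q=Ax_w+Bx_u+Cx_ux_w+D$ is the right tool, because it isolates exactly where $r\ge3$ is needed: $C>0$ when $u$ and $w$ are twins. For $r=2$ that coefficient vanishes, and indeed Lagrangian maximizers of complete bipartite graphs can vanish on vertices and need not be constant on classes.

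One small expository point concerns the $p=1$ constancy step. The phrase ``the same choice of $T$ as above'' does not literally apply there, since $u$ may now lie in $e$. This is harmless: because $x>0$, you may take any $(r-2)$-subset of $V(Q)\setminus\{u,w\}$ containing a vertex outside the common class of $u,w$. Such a subset exists because $Q$ has an edge, so at least two classes are nonempty, and $|V(Q)|\ge r$.

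Your averaging argument also works verbatim for $p>1$. Under averaging the $\ell_p$-norm strictly drops when $x_u\ne x_w$, while $P_Q$ does not decrease. You could therefore drop \cref{lem:quoted-symmetric-vertices} and give a uniform treatment of constancy for all $p\ge1$.
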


\begin{lemma}[{\cite[Propositions~12 and~14]{KNY2015}}]
\label{lem:quoted-strict-monotonicity}
Let $r\ge 3$, $k\ge 2$, $p\ge 1$, and let $Q$ be a complete $k$-chromatic $r$-graph with at least one edge.
If $H$ is a proper subgraph of $Q$ on the same vertex set, then $\lambda^{(p)}(H)<\lambda^{(p)}(Q)$. 
\end{lemma}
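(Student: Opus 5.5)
The plan is a direct comparison argument: test $Q$ on an optimal vector for $H$, and use the positivity of eigenvectors of complete $k$-chromatic graphs (Lemma~\ref{lem:quoted-complete-chromatic-positivity}) to make the inequality strict.

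First, by Lemma~\ref{lem:quoted-basic-variational}(i), I choose a nonnegative vector $y\in\mathbb R^{V}$ with $\|y\|_p=1$ and $P_H(y)=\lambda^{(p)}(H)$. Since $H\subseteq Q$ and every coordinate of $y$ is nonnegative, each edge of $Q\setminus H$ contributes a nonnegative term to $P_Q(y)$. This gives
\[
        \lambda^{(p)}(H)=P_H(y)
        = P_Q(y)-r!\sum_{e\in Q\setminus H}\prod_{v\in e}y_v
        \le P_Q(y)\le \lambda^{(p)}(Q),
\]
where the last step uses the definition of $\lambda^{(p)}(Q)$, or equivalently Lemma~\ref{lem:quoted-basic-variational}(ii) with $\|y\|_p=1$. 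This already yields the weak inequality, which is Lemma~\ref{lem:quoted-basic-variational}(iii).

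For strictness, I suppose $\lambda^{(p)}(H)=\lambda^{(p)}(Q)$. Then both inequalities in the display are equalities. In particular $P_Q(y)=\lambda^{(p)}(Q)$ with $\|y\|_p=1$ and $y\ge 0$, so $y$ is a nonnegative eigenvector to $\lambda^{(p)}(Q)$. Since $Q$ is a complete $k$-chromatic $r$-graph with at least one edge, Lemma~\ref{lem:quoted-complete-chromatic-positivity} applies for every $p\ge1$, including the Lagrangian case $p=1$, and shows that $y$ is strictly positive. Because $H$ is a proper subgraph of $Q$ on the same vertex set, $Q\setminus H$ contains at least one edge $e$, and $\prod_{v\in e}y_v>0$. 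Hence the first inequality in the display is strict, which is a contradiction.

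No step is a real obstacle once Lemma~\ref{lem:quoted-complete-chromatic-positivity} is available; all the depth of the statement lies in that positivity result. The only point to check is that equality in the chain really makes $y$ a maximizer for $Q$ and not merely for $H$. This holds because $y$ lies on the unit $\ell_p$-sphere and attains the value $\lambda^{(p)}(Q)$. The argument also covers the degenerate case where $H$ has no edges: then $\lambda^{(p)}(H)=0<\lambda^{(p)}(Q)$, since $Q$ has an edge.
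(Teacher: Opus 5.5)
Your proof is correct and is essentially the route behind the citation. The paper gives no proof of its own and only quotes Kang--Nikiforov--Yuan, whose argument likewise combines the comparison $\lambda^{(p)}(H)=P_H(y)\le P_Q(y)\le\lambda^{(p)}(Q)$ with the fact that every nonnegative eigenvector of a complete $k$-chromatic graph is positive (Lemma~\ref{lem:quoted-complete-chromatic-positivity}), so equality would force an edge of $Q\setminus H$ to carry positive weight. You also correctly noted that this requires positivity for every nonnegative eigenvector of $Q$, not merely for some.
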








We shall use the Power Mean inequality, the AM--GM inequality, Bernoulli's inequality, Maclaurin's inequality, H{\"o}lder's inequality, and the standard majorization facts for Schur-convex and Schur-concave functions.
We refer to Hardy--Littlewood--P{\'o}lya~\cite{HLP} for these classical inequalities.

In particular, we shall use the following direct consequence of H{\"o}lder's inequality.

\begin{lemma}
\label{lem:quoted-holder-reduction}
Let $H$ be an $r$-graph and let $p>1$.
Then
\[
        \lambda^{(p)}(H)
        \le
        \bigl(r!|H|\bigr)^{1-1/p}
        \bigl(\lambda^{(1)}(H)\bigr)^{1/p}.
\]
\end{lemma}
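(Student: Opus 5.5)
The plan is to prove the inequality by applying H\"older's inequality coordinatewise to the polyform. Fix $x\ge0$ with $\|x\|_p=1$ and write
\[
        P_H(x)=r!\sum_{e\in H}\prod_{v\in e}x_v .
\]
We split each summand as $1\cdot\prod_{v\in e}x_v$ and apply H\"older with exponents $p'=p/(p-1)$ and $p$ over the index set $H$:
\[
        \sum_{e\in H}\prod_{v\in e}x_v
        \le
        |H|^{1-1/p}\Bigl(\sum_{e\in H}\prod_{v\in e}x_v^{p}\Bigr)^{1/p}.
\]

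Next we set $y_v=x_v^p$. Then $y\ge0$ and $\sum_v y_v=\|x\|_p^p=1$, so $y$ is feasible for the Lagrangian. By the definition of $\lambda^{(1)}$ (or Lemma~\ref{lem:quoted-basic-variational}(ii) with $p=1$),
\[
        r!\sum_{e\in H}\prod_{v\in e}y_v=P_H(y)\le\lambda^{(1)}(H).
\]
Substituting, we get
\[
        P_H(x)\le r!\,|H|^{1-1/p}\bigl(\lambda^{(1)}(H)/r!\bigr)^{1/p}
        =(r!|H|)^{1-1/p}\bigl(\lambda^{(1)}(H)\bigr)^{1/p},
\]
since $r!=(r!)^{1-1/p}(r!)^{1/p}$.

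Finally, we take the maximum over $x$. By Lemma~\ref{lem:quoted-basic-variational}(i) this maximum is attained at a nonnegative vector, so restricting to $x\ge0$ costs nothing. If $H$ is empty, both sides are $0$.

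There is no real obstacle here. The only point requiring care is the bookkeeping of the $r!$ factors, so that the constant comes out as exactly $(r!|H|)^{1-1/p}$.
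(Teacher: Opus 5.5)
Your proof is correct and follows essentially the same route as the paper: Hölder over the edge set with exponents $p/(p-1)$ and $p$, the substitution $y_v=x_v^p$ to land on the $\ell_1$-simplex, and the Lagrangian bound on $\sum_{e\in H}\prod_{v\in e}y_v$. The only cosmetic difference is that the paper applies this to an eigenvector directly, whereas you bound an arbitrary nonnegative unit vector and then maximize; the bookkeeping of the $r!$ factors is handled correctly.
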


\begin{proof}
Let $x\ge 0$ be an eigenvector to $\lambda^{(p)}(H)$, normalized by $\sum_v x_v^p=1$.
By H{\"o}lder's inequality,
\[
\lambda^{(p)}(H)
=
r!\sum_{e\in H}\prod_{v\in e}x_v
\le
r!|H|^{1-1/p}
\left(\sum_{e\in H}\prod_{v\in e}x_v^p\right)^{1/p}.
\]
Setting $y_v=x_v^p$, we have $y_v\ge 0$ and $\sum_v y_v=1$, hence
\[
        r!\sum_{e\in H}\prod_{v\in e}y_v
        \le \lambda^{(1)}(H).
\]
Combining the last two displays gives the result.
\end{proof}

\begin{lemma}
\label{lem:no-small-class-unified}
Let $r\ge3$, $k\ge2$, $p\ge1$, and $n>(r-1)k$.
Let $\mathbf n=(n_1,\ldots,n_k)$ maximize
\[
        \lambda^{(p)}(Q(n_1,\ldots,n_k))
\]
among all $k$-tuples of nonnegative integers summing to $n$.
Then $n_i\ge r-1$ for every $i \in [k]$. 
\end{lemma}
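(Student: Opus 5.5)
Suppose, for contradiction, that some class has $n_i\le r-2$. Since $n>(r-1)k$, pigeonhole gives another class $j$ with $n_j\ge r$; we may take $n_j$ maximal, so $n_j\ge r$. The plan is to move one vertex $w$ from $V_j$ to $V_i$. This gives $\mathbf n'$ with $n_i'=n_i+1\le r-1$ and $n_j'=n_j-1\ge r-1$, and we aim to show $\lambda^{(p)}(Q(\mathbf n'))>\lambda^{(p)}(Q(\mathbf n))$, contradicting maximality.

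The key observation is that the move only adds edges; in the natural identification of vertex sets, $Q(\mathbf n)$ is a proper subgraph of $Q(\mathbf n')$.
\begin{itemize}
\item An $r$-set $e$ is a non-edge of $Q(\mathbf n)$ exactly when it lies inside one class.
\item An $r$-set inside $V_i\cup\{w\}$ cannot occur, since $|V_i\cup\{w\}|=n_i+1\le r-1<r$.
\item Every $r$-set inside $V_j\setminus\{w\}$ was already inside $V_j$, so it was already a non-edge.
\item The other classes are unchanged.
\end{itemize}
Hence every non-edge of $Q(\mathbf n')$ is a non-edge of $Q(\mathbf n)$, that is, $Q(\mathbf n)\subseteq Q(\mathbf n')$. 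Conversely, any $r$-subset of $V_j$ containing $w$ exists because $n_j\ge r$. Such a set is a non-edge of $Q(\mathbf n)$ but an edge of $Q(\mathbf n')$, so the containment is proper.

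Both graphs have at least one edge, since $n>(r-1)k$ forces some class of size at least $r$ and hence $n\ge r+1$ with at least two classes used. Also $Q(\mathbf n')$ is a complete $k$-chromatic $r$-graph on the same vertex set. Lemma~\ref{lem:quoted-strict-monotonicity} then gives $\lambda^{(p)}(Q(\mathbf n))<\lambda^{(p)}(Q(\mathbf n'))$, which is the desired contradiction.

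The only subtle points are degenerate cases. If $n_i=0$ the class is empty, but the argument above is unchanged. We also need $Q(\mathbf n)$ to have an edge so that the lemma applies. Here $n\ge r$ and $k\ge2$ with some class of size at most $r-2$. If all vertices were in one class, every class other than $j$ would be empty, and the move creates edges anyway. For the statement we only need the non-strict inequality from Lemma~\ref{lem:quoted-basic-variational}(iii) together with strictness whenever $Q(\mathbf n)$ has an edge. If $Q(\mathbf n)$ has no edge, then $\lambda^{(p)}=0$, which is strictly less than $\lambda^{(p)}(Q(\mathbf n'))>0$. I expect no real obstacle; the main care is verifying the subgraph containment, which is the central step.
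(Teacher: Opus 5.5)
Your proof is correct and follows the paper's argument. The paper makes the same move of one vertex from a class of size at least $r$ into the class of size at most $r-2$, notes that edges are only gained, and gets strictness by evaluating the old positive eigenvector on the new graph; you instead package this as $Q(\mathbf n)\subsetneq Q(\mathbf n')$ and invoke \cref{lem:quoted-strict-monotonicity}, whose edge hypothesis concerns only the larger graph $Q(\mathbf n')$, so your separate treatment of an edgeless $Q(\mathbf n)$ is unnecessary, though harmless.
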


\begin{proof}
Because $n>(r-1)k$, some complete $k$-chromatic $r$-graph of order $n$ has an edge.
Thus an extremal tuple has positive $p$-spectral radius, and its graph has at least one edge.
By \cref{lem:quoted-complete-chromatic-positivity}, an eigenvector is positive and constant on each nonempty class.

Suppose that some class $V_j$ has size $b\le r-2$.
Since $\sum_i n_i>(r-1)k$, some class $V_i$ has size $a\ge r$.
Let $\alpha>0$ be the common eigenvector value on $V_i$.
Move one vertex $v$ from $V_i$ to $V_j$, and keep the same vector on the same labelled vertex set.
The new graph gains all sets
\[
        \{v\}\cup S,\qquad S\in (V_i\setminus\{v\})^{(r-1)},
\]
and loses all sets
\[
        \{v\}\cup T,\qquad T\in V_j^{(r-1)}.
\]
Since $b\le r-2$, no latter sets exist.
Hence the old vector gives a strictly larger test value for the new tuple, contradicting extremality.
Therefore every class has size at least $r-1$.
\end{proof}

\section{Structural consequences of extremality for \texorpdfstring{$p>1$}{p>1}}
\label{sec:structural}

Throughout this section and the next two sections, we fix $p>1$.
For a $k$-tuple $\mathbf n=(n_1,\ldots,n_k)$ of nonnegative integers summing to $n$, put
\[
        \Lambda_p(\mathbf n)
        =
        \lambda^{(p)}(Q(n_1,\ldots,n_k)).
\]
We call $\mathbf n$ extremal if it maximizes $\Lambda_p$ among all nonnegative integer $k$-tuples with sum $n$.

The main result of this section is the following result. 

\begin{proposition}
\label{prop:structural-hypotheses}
Let $r\ge 3$, $k\ge2$, $p>1$, and $n>(r-1)k$.
Let $\mathbf n=(n_1,\ldots,n_k)$ be an extremal tuple for $\Lambda_p$, and let $x\ge0$ be an eigenvector to $\lambda^{(p)}(Q(\mathbf n))$ normalized by $\sum_v x_v^p=1$.
Then every class has positive size, $x$ is positive, and $x$ is constant on every class.
If $a_i$ denotes the common value of $x$ on $V_i$, then after relabelling the classes so that $n_1\ge\cdots\ge n_k$, the following hold:
\begin{enumerate}[label=(S\arabic*)]
\item $0<a_1\le a_2\le\cdots\le a_k$;
\item for every $i\in[k]$,
\[
        n_1a_1^p\ge n_i a_i^p\ge (n_1-1)a_1^p;
\]
\item
\[
        n_k\ge n_1-\left\lceil\frac1{p-1}\right\rceil.
\]
\end{enumerate}
\end{proposition}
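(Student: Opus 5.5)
The plan is to work almost entirely with the eigen-equations of \cref{lem:quoted-eigenequations}, compared across pairs of classes, plus test-vector arguments that come from extremality of $\mathbf n$.

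\emph{Preliminaries.} By \cref{lem:no-small-class-unified}, every $n_i\ge r-1\ge 1$, so every class is nonempty. Since $n>(r-1)k$, the graph $Q(\mathbf n)$ has an edge. Then \cref{lem:quoted-complete-chromatic-positivity} gives that $x>0$ and that $x$ is constant on each class, so the values $a_i$ are well defined.

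\emph{Step 1: (S1).} Fix $u\in V_i$ and $v\in V_j$ with $n_i\ge n_j$, and let $W=V\setminus\{u,v\}$. Write the link sums of $u$ and $v$ as a common part over $W$, plus the sets through the other vertex, minus the $(r-1)$-sets inside their own class. Subtracting the two eigen-equations should give
\[
\lambda\,(a_i^{p-1}-a_j^{p-1})=(r-1)!\Bigl[(a_j-a_i)\,e_{r-2}(x|_W)-\tbinom{n_i-1}{r-1}a_i^{r-1}+\tbinom{n_j-1}{r-1}a_j^{r-1}\Bigr],
\]
where $e_{r-2}(x|_W)>0$. Suppose $a_i>a_j$. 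Then the left side is positive. On the right, the first term is negative, and the remaining two terms together are $\le 0$ because $n_i\ge n_j$ and $a_i>a_j$. This is a contradiction, so (S1) follows. Applying the same identity with $i=1$ also gives a first comparison, $\binom{n_i-1}{r-1}a_i^{r-1}\le\binom{n_1-1}{r-1}a_1^{r-1}$, which I will use later.

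\emph{Step 2: (S2).} Here I use extremality of $\mathbf n$. Move a vertex $v$ from $V_1$ to $V_i$, or from $V_i$ to $V_1$, keeping all other coordinates. Give $v$ a free value $t\ge0$, and apply \cref{lem:quoted-basic-variational}(ii) to the new complete $k$-chromatic graph, whose $\lambda^{(p)}$ is at most $\lambda$. Writing $P_0$ for the part of the polyform not involving $v$, this gives
\[
\frac{P_0+r\,t\,L'}{\bigl(1-a_1^p+t^p\bigr)^{r/p}}\le\lambda\qquad\text{for all } t\ge 0,
\]
where $L'$ is $(r-1)!$ times the link sum of $v$ in the new graph. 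Next I eliminate $P_0$ using $P_0+r\,a_1L=\lambda$ and $L=\lambda a_1^{p-1}$, and choose $t$ to be the eigen-equation value $t^{p-1}=L'/\lambda$. The first-order expansion should then turn the inequality into $L'\le L$ up to the ratio of $p$-masses. The difference $L'-L$ is exactly $(r-1)!\bigl[\binom{n_1-1}{r-1}a_1^{r-1}-\binom{n_i}{r-1}a_i^{r-1}\bigr]$, with the analogous expression for the reverse move. Combining the two directions with the Step 1 comparison should yield the two-sided bound $n_1a_1^p\ge n_ia_i^p\ge(n_1-1)a_1^p$. I expect to convert binomial ratios such as $\binom{n_i}{r-1}/\binom{n_1-1}{r-1}$ into ratios $n_i/n_1$ via a monotonicity or Bernoulli-type estimate.

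\emph{Step 3: (S3), the main obstacle.} From (S2) with $i=k$ we get $n_k a_k^p\ge(n_1-1)a_1^p$, so $n_1-1\le n_k(a_k/a_1)^p$. It therefore suffices to bound $a_k/a_1$ from above in terms of $n_1/n_k$ with exponent $1/(p-1)$, roughly $(a_k/a_1)^{p-1}\lesssim n_1/n_k$. I would try to get this from the Step 1 identity with $(i,j)=(1,k)$, dropping the negative term $(a_1-a_k)e_{r-2}$ and comparing $\lambda a_k^{p-1}$ with $\lambda a_1^{p-1}$ through the class-internal terms. I expect this bound on $a_k/a_1$ to be the hardest part: the eigen-identity gives only weak control, and extracting precisely the threshold $\lceil 1/(p-1)\rceil$ will need a careful Bernoulli-inequality estimate of the form $(1+1/m)^{p-1}$ versus $1+(p-1)/m$. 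If this route is too lossy, my fallback is to rerun the Step 2 test-vector argument, moving a vertex from $V_1$ to $V_k$, but keeping second-order terms in $t$ rather than only the first-order expansion.
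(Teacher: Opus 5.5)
Your preliminaries, your proof of (S1), and the lower half of (S2) via the move $V_1\to V_i$ essentially match the paper. The genuine gap is the upper half of (S2), $n_1a_1^p\ge n_ia_i^p$.

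\textbf{Why your tools cannot reach the upper half of (S2).} The free value $t$ on the moved vertex adds nothing. In the original graph, $P_0+rtL-\lambda(1-a_1^p+t^p)^{r/p}\le0$ already holds for all $t\ge0$, with equality at $t=a_1$. So your inequality for all $t$ is exactly equivalent to $L'\le L$. Hence the two moves yield only
\[
\tbinom{n_1-1}{r-1}a_1^{r-1}\le\tbinom{n_i}{r-1}a_i^{r-1}
\quad\text{and}\quad
\tbinom{n_i-1}{r-1}a_i^{r-1}\le\tbinom{n_1}{r-1}a_1^{r-1},
\]
and the second is weaker than your Step~1 comparison. All of these, together with (S1), are satisfied by $a_i/a_1=\bigl(\binom{n_1-1}{r-1}/\binom{n_i-1}{r-1}\bigr)^{1/(r-1)}$. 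This value is at least $n_1/n_i>(n_1/n_i)^{1/p}$ when $n_1>n_i$. So no combination of these inequalities can give $n_ia_i^p\le n_1a_1^p$.

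\textbf{The missing idea.} You need a non-infinitesimal perturbation of $x$ inside the fixed graph $Q(\mathbf n)$; it uses optimality of $x$ rather than extremality of $\mathbf n$. Set $N=n_1>M=n_i$ and suppose $Na_1^\theta<Ma_i^\theta$ for some $1\le\theta<p$. Exchange the $\theta$-masses of the two classes, so that the larger class receives the larger $\theta$-mass. Because $\binom z\ell z^{-(\ell-m)/\theta}/\binom zm$ is nondecreasing in $z$ for $\ell\ge m$, each pair of edge types with $(\ell,m)$ and $(m,\ell)$ vertices in the two classes contributes at least as much after the exchange. So $P_Q$ does not decrease. Meanwhile the $p$-mass of the two classes strictly drops, because $p/\theta>1$ and $N>M$. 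Rescaling to norm $1$ contradicts the maximality of $x$, and letting $\theta\uparrow p$ gives the bound.

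\textbf{Step~3 is aimed in the wrong direction.} The two halves of (S2) only say $(n_1-1)/n_k\le(a_k/a_1)^p\le n_1/n_k$, which is consistent for every gap $d=n_1-n_k$. An upper bound of the form $(a_k/a_1)^{p-1}\le n_1/n_k$ combined with $n_1-1\le n_k(a_k/a_1)^p$ is vacuous, since $((n_1-1)/n_k)^{1/p}\le(n_1/n_k)^{1/(p-1)}$ always. Your proposed sources of such a bound do not help anyway:
\begin{enumerate}[label=(\alph*)]
\item Dropping $(a_k-a_1)e_{r-2}$ leaves $\lambda(a_k^{p-1}-a_1^{p-1})\le(r-1)!\binom{n_1-1}{r-1}a_1^{r-1}$. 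This bounds $(a_k/a_1)^{p-1}-1$ only by a quantity of order one, not of order $d/n_k$.
\item Second-order terms in $t$ give nothing, by the remark above.
\end{enumerate}

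\textbf{What works for (S3).} Use the exponent-one bound $a_k/a_1\ge(n_1-1)/n_k$. You already get it from the move $V_1\to V_k$ and the binomial-ratio estimate, and it is stronger than the lower half of (S2). Set it against the upper half of (S2), $a_k/a_1\le(n_1/n_k)^{1/p}$. This gives
\[
1+\frac{d-1}{n_k}\le\Bigl(1+\frac{d}{n_k}\Bigr)^{1/p}<1+\frac{d}{pn_k},
\]
hence $d<p/(p-1)$, so $d\le\lceil 1/(p-1)\rceil$.
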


Note that, since $\left\lceil\frac1{p-1}\right\rceil \le 1$ when $p \ge 2$, Proposition~\ref{prop:structural-hypotheses} confirms Conjecture~\ref{conj:kny} for $p \ge 2$.

The proof of Proposition~\ref{prop:structural-hypotheses} is divided into several lemmas. 

The next result shows that, for an extremal complete $k$-chromatic graph, the class values of a positive eigenvector are ordered oppositely to the class sizes.
For a finite multiset $W=\{w_1,\ldots,w_m\}$, let $e_q(W)$ denote the $q$-th elementary symmetric sum, with $e_0(W)=1$ and $e_q(W)=0$ for $q>|W|$.
Explicitly,
\[
        e_q(W)
        =
        \sum_{I \in \binom{[m]}{q}}\prod_{i\in I}w_i .
\]
We shall use the identity
\[
        e_s(W\cup\{z\})=e_s(W)+z e_{s-1}(W).
\]

\begin{lemma}\label{lem:opposite-order}
Let $p>1$ be a real number, let $\mathbf n=(n_1,\ldots,n_k)$ be an extremal tuple, and let $a_i$ be the common positive eigenvector value on $V_i$.
If $n_i\ge n_j$, then $a_i\le a_j$.
In particular, if $n_i=n_j$, then $a_i=a_j$.
\end{lemma}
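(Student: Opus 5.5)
The plan is to compare the eigenequations of \cref{lem:quoted-eigenequations} at a vertex of $V_i$ and at a vertex of $V_j$, and derive a sign contradiction if $a_i>a_j$. Extremality is used only to get, via \cref{lem:quoted-complete-chromatic-positivity} (with \cref{lem:no-small-class-unified} guaranteeing nonempty classes and an edge), that $x$ is positive and constant on classes. The ordering argument itself should hold for any positive class-constant eigenvector of a complete $k$-chromatic graph.

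\textbf{Step 1: the eigenequation.} Let $W$ be the multiset of all $n$ coordinates of $x$. For $u\in V_i$, the link of $u$ in $Q(\mathbf n)$ consists of all $(r-1)$-subsets of $V\setminus\{u\}$ except those contained in $V_i\setminus\{u\}$. Hence \cref{lem:quoted-eigenequations} gives
\[
\frac{\lambda^{(p)}(Q(\mathbf n))\,a_i^{p-1}}{(r-1)!}
= e_{r-1}\bigl(W\setminus\{a_i\}\bigr)-\tbinom{n_i-1}{r-1}a_i^{r-1}.
\]

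\textbf{Step 2: subtract the two equations.} Fix $u\in V_i$ and $w\in V_j$ with $i\ne j$. Let $R$ be the multiset $W$ with one copy of $a_i$ and one copy of $a_j$ removed. By the identity $e_s(R\cup\{z\})=e_s(R)+ze_{s-1}(R)$,
\[
e_{r-1}(W\setminus\{a_i\})-e_{r-1}(W\setminus\{a_j\})=(a_j-a_i)\,e_{r-2}(R).
\]
Subtracting the equation at $w$ from the one at $u$ then yields
\[
\frac{\lambda\,(a_i^{p-1}-a_j^{p-1})}{(r-1)!}
=(a_j-a_i)\,e_{r-2}(R)-\Bigl[\tbinom{n_i-1}{r-1}a_i^{r-1}-\tbinom{n_j-1}{r-1}a_j^{r-1}\Bigr].
\]

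\textbf{Step 3: the sign contradiction.} Suppose $n_i\ge n_j$ but $a_i>a_j$.
\begin{itemize}
\item Since $p>1$ and $\lambda>0$, the left-hand side is strictly positive.
\item $R$ has $n-2\ge r-2$ positive entries, because $n>(r-1)k\ge r$. So $e_{r-2}(R)>0$, and the first term on the right is strictly negative.
\item Since $n_i\ge n_j$, we have $\binom{n_i-1}{r-1}\ge\binom{n_j-1}{r-1}$, and $a_i^{r-1}>a_j^{r-1}$. So the bracket is nonnegative and is subtracted.
\end{itemize}
The right-hand side is therefore negative, a contradiction. Hence $a_i\le a_j$.

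If $n_i=n_j$, applying this in both directions gives $a_i=a_j$. Alternatively, \cref{lem:quoted-symmetric-vertices} applies, since swapping the two classes is an automorphism.

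The only delicate point is strictness. It rests on $e_{r-2}(R)>0$, which needs positivity of $x$ and $n-2\ge r-2$, together with $p>1$ so that $t\mapsto t^{p-1}$ is strictly increasing. Everything else is bookkeeping with elementary symmetric sums.
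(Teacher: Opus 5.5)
Your proof is correct and is essentially the paper's argument: you subtract the eigenequations at $u\in V_i$ and $v\in V_j$, use $e_s(\mathcal C\cup\{z\})=e_s(\mathcal C)+z\,e_{s-1}(\mathcal C)$ to isolate $(a_j-a_i)e_{r-2}(R)$ minus the monochromatic defects, and reach the same sign contradiction. Your justification of $e_{r-2}(R)>0$ from positivity and $n-2\ge r-2$ is fine, and slightly more direct than the paper's appeal to \cref{lem:no-small-class-unified}.

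The one thing to drop is the ``alternatively'' remark. \cref{lem:quoted-symmetric-vertices} requires a transposition of two vertices to be an automorphism. Swapping $u\in V_i$ with $v\in V_j$ is not one once $n_i\ge r$: it sends the edge $\{v\}\cup S$, with $S\in (V_i\setminus\{u\})^{(r-1)}$, to the non-edge $\{u\}\cup S$. So the equal-size case should rest only on applying the inequality in both directions, as you do first.
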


\begin{proof}
Put $s=r-1$.
For a vertex $u\in V_i$, write
\[
 L_i=\sum_{S\in\mathcal L_Q(u)}\prod_{w\in S}x_w.
\]
The Lagrange multiplier equation for the positive eigenvector gives
\[
 \lambda^{(p)}(Q)a_i^{p-1}=(r-1)!L_i.
\]
The analogous equation holds for each class.

Assume, for a contradiction, that $n_i\ge n_j$ but $a_i>a_j$.
Choose $u\in V_i$ and $v\in V_j$.
Let $\mathcal C$ be the multiset of all weights outside $\{u,v\}$.
Then
\[
 L_i=e_s(\mathcal C\cup\{a_j\})-\binom{n_i-1}{s}a_i^s,
\]
and
\[
 L_j=e_s(\mathcal C\cup\{a_i\})-\binom{n_j-1}{s}a_j^s.
\]
Therefore
\[
 L_i-L_j=(a_j-a_i)e_{s-1}(\mathcal C)-\binom{n_i-1}{s}a_i^s+\binom{n_j-1}{s}a_j^s.
\]
The factor $e_{s-1}(\mathcal C)$ is positive: if $s=1$, then it is $e_0(\mathcal C)=1$, while if $s\ge2$, \cref{lem:no-small-class-unified} ensures that after deleting $u$ and $v$ there remain at least $s-1$ positive entries.
Hence the first term is negative, since $a_i>a_j$.
The remaining two terms are nonpositive, because $n_i\ge n_j$ and $a_i>a_j$.
Hence $L_i<L_j$.

On the other hand, the eigenequations give
\[
 L_i-L_j=\frac{\lambda^{(p)}(Q)}{(r-1)!}\left(a_i^{p-1}-a_j^{p-1}\right)>0,
\]
again because $a_i>a_j$.
This contradiction proves the result.
\end{proof}

We next prove the main mass-ordering lemma.
The following coefficient inequality is the elementary input.

\begin{lemma}\label{lem:coeff-monotone}
Let $1\le t$, let $\ell\ge m\ge0$ be integers, and define, for integers $z\ge \ell$,
\[
 F_{\ell,m,t}(z)=\frac{\binom z\ell}{\binom z m}z^{-(\ell-m)/t}.
\]
Then $F_{\ell,m,t}(z)$ is nondecreasing in $z$.
\end{lemma}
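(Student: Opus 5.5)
Since $F_{\ell,m,t}(z)>0$ for integers $z\ge\ell$, I will show that the ratio $F_{\ell,m,t}(z+1)/F_{\ell,m,t}(z)$ is at least $1$ for every integer $z\ge\ell$. If $\ell=m$ then $F\equiv1$ and there is nothing to prove, so I assume $\ell>m$. The binomial ratio telescopes:
\[
        \frac{\binom{z}{\ell}}{\binom{z}{m}}
        =
        \frac{m!\,(z-m)!}{\ell!\,(z-\ell)!}
        =
        \frac{m!}{\ell!}\prod_{j=m}^{\ell-1}(z-j).
\]
Hence
\[
        \frac{F_{\ell,m,t}(z+1)}{F_{\ell,m,t}(z)}
        =
        \prod_{j=m}^{\ell-1}\frac{z+1-j}{z-j}
        \cdot
        \left(\frac{z}{z+1}\right)^{(\ell-m)/t}.
\]

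The product has exactly $\ell-m$ factors, which matches the exponent. The inequality will therefore follow factor by factor once I show, for each $j\in\{m,\ldots,\ell-1\}$,
\[
        \frac{z+1-j}{z-j}\ge\left(\frac{z+1}{z}\right)^{1/t}.
\]
Because $z\ge\ell>j$, the denominator $z-j$ is positive. Since $j\ge0$, we get $1+\frac1{z-j}\ge 1+\frac1z$. Since $t\ge1$ and $1+\frac1z>1$, we also have $\bigl(1+\frac1z\bigr)^{1/t}\le 1+\frac1z$. Chaining these two bounds gives the displayed inequality. Multiplying over the $\ell-m$ values of $j$ then yields $F(z+1)\ge F(z)$.

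I do not expect a real obstacle here. The only points needing care are positivity (which requires $z\ge\ell$, so that $z-j>0$) and the step $(1+1/z)^{1/t}\le 1+1/z$, which is exactly where the hypothesis $t\ge1$ enters. If the lemma is later applied to real $z$, the same computation works with the falling-factorial convention $\binom{x}{s}$ fixed in Section~\ref{sec:Preliminaries}, by differentiating $\log F$ instead: one gets $\sum_{j}\frac1{z-j}-\frac{\ell-m}{tz}\ge0$, which holds termwise.
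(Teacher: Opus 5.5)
Your proof is correct and takes essentially the same route as the paper. Both compute the consecutive ratio, write the binomial part as a telescoping product of $\ell-m$ factors each bounded below by a common quantity, and use $t\ge1$ to absorb the power $(\ell-m)/t$. The paper works with $F(z)/F(z-1)$ and bounds each factor by $z/(z-1)$; you work with $F(z+1)/F(z)$ and bound each factor by $1+1/z$, which also covers the endpoint $z=\ell$ directly.
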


\begin{proof}
The case $\ell=m$ is trivial.
Assume $\ell>m$.
For $z>\ell$,
\[
 \frac{F_{\ell,m,t}(z)}{F_{\ell,m,t}(z-1)}=\frac{z-m}{z-\ell}\left(\frac{z-1}{z}\right)^{(\ell-m)/t}.
\]
Also
\[
 \frac{z-m}{z-\ell}=\prod_{q=m+1}^{\ell}\frac{z-q+1}{z-q}\ge\left(\frac{z}{z-1}\right)^{\ell-m}.
\]
Since $t\ge1$, this implies
\[
 \frac{F_{\ell,m,t}(z)}{F_{\ell,m,t}(z-1)}\ge\left(\frac{z}{z-1}\right)^{(\ell-m)(1-1/t)}\ge1.
\]
The endpoint $z=\ell$ is harmless, because the preceding inequality proves monotonicity from that point onward.
\end{proof}

\begin{lemma}\label{lem:t-mass-switch}
Let $Q=Q(n_1,\ldots,n_k)$ be a complete $k$-chromatic $r$-graph, and fix two nonempty classes $U,V$ of sizes $N\ge M\ge1$.
Suppose a nonnegative vector is constant with values $\alpha$ on $U$ and $\beta$ on $V$, while all other coordinates are fixed.
Let $1\le t$ and set
\[
 Y_U=N\alpha^t,
 \qquad
 Y_V=M\beta^t.
\]
If $N>M$ and $Y_U<Y_V$, define new values $\widetilde\alpha,\widetilde\beta$ by
\[
 N\widetilde\alpha^t=Y_V,
 \qquad
 M\widetilde\beta^t=Y_U.
\]
Then the value of $P_Q$ does not decrease after replacing $\alpha,\beta$ by $\widetilde\alpha,\widetilde\beta$.
\end{lemma}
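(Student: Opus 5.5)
The plan is to write $P_Q$ as a polynomial in $\alpha,\beta$ with all other coordinates frozen, and to group its monomials into symmetric pairs $(j,l),(l,j)$. Each pair will be shown not to decrease, with Lemma~\ref{lem:coeff-monotone} supplying the only nontrivial inequality.

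\textbf{Step 1: expand $P_Q$.} Let $W$ be the multiset of coordinates outside $U\cup V$. An $r$-set using $j$ vertices of $U$, $l$ vertices of $V$ and $r-j-l$ vertices of $W$ is an edge in exactly these cases:
\begin{enumerate}[label=(\alph*)]
\item $j+l\ge1$ and $(j,l)\notin\{(r,0),(0,r)\}$;
\item $j=l=0$ and the set is not inside a single class; these sets give a constant.
\end{enumerate}
Hence
\[
 \frac{P_Q}{r!}=C+\sum_{(j,l)\in I}\binom Nj\binom Ml\,e_{r-j-l}(W)\,\alpha^j\beta^l ,
\]
where $C$ does not depend on $\alpha,\beta$ and $I=\{(j,l): j,l\ge0,\ 1\le j+l\le r\}\setminus\{(r,0),(0,r)\}$. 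The index set $I$ and the coefficient $c_{j+l}:=e_{r-j-l}(W)\ge0$ are both symmetric under $(j,l)\mapsto(l,j)$.

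\textbf{Step 2: rewrite in the masses.} Put $X=Y_U^{1/t}$ and $Z=Y_V^{1/t}$, so $X<Z$. Then $\alpha=X N^{-1/t}$ and $\beta=ZM^{-1/t}$. Set $g(z,j)=\binom zj z^{-j/t}$. The $(j,l)$ term becomes
\[
 c_{j+l}\,g(N,j)\,g(M,l)\,X^jZ^l .
\]
The switch sends $\alpha\mapsto Z N^{-1/t}$ and $\beta\mapsto XM^{-1/t}$, which amounts to exchanging $X$ and $Z$ while keeping the coefficients $g(N,\cdot)$ and $g(M,\cdot)$ in place.

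\textbf{Step 3: compare pairs.} Diagonal terms $j=l$ contain $(XZ)^j$ and are unchanged. For $j>l$, the pair $(j,l),(l,j)$ changes by
\[
 c_{j+l}\bigl(g(N,j)g(M,l)-g(N,l)g(M,j)\bigr)\,X^lZ^l\bigl(Z^{j-l}-X^{j-l}\bigr).
\]
The last factor is nonnegative because $Z>X$. It therefore suffices to show
\[
 g(N,j)\,g(M,l)\ \ge\ g(N,l)\,g(M,j).
\]
\begin{itemize}
\item If $M<j$, then $g(M,j)=0$ and the left side is nonnegative, so the inequality holds.
\item If $M\ge j$, then also $N\ge j$, and all $g$'s involved are positive. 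The inequality is equivalent to $F_{j,l,t}(N)\ge F_{j,l,t}(M)$, which is exactly Lemma~\ref{lem:coeff-monotone} applied with $\ell=j$, $m=l$ and $N\ge M\ge j$.
\end{itemize}
Summing over all pairs shows that $P_Q$ does not decrease.

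\textbf{Main obstacle.} The only delicate point is the bookkeeping in Step 1. One must confirm that the coefficient of $\alpha^j\beta^l$ depends only on $j+l$ through $W$, apart from the binomial factors, so that it is symmetric under swapping $j$ and $l$. One must also confirm that the excluded monochromatic sets (all in $U$, all in $V$, or all inside a single other class) either come as a symmetric pair, namely $(r,0),(0,r)$, or do not involve $\alpha,\beta$ at all. Once this is settled, the rest reduces to the pairwise inequality above.
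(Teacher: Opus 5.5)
Your proposal is correct and is essentially the paper's proof. The paper likewise expands with the common outside factor $e_h(Z)$, discards the symmetric nonedge pair $(r,0),(0,r)$, and pairs the $(\ell,m)$ and $(m,\ell)$ contributions via $C_{\ell,m}=\binom N\ell\binom Mm N^{-\ell/t}M^{-m/t}$ (your $g(N,j)\,g(M,l)$). It then obtains $C_{\ell,m}\ge C_{m,\ell}$ from \cref{lem:coeff-monotone}, or trivially when $M<\ell$, exactly as you do.
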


\begin{proof}
Only terms using vertices from $U\cup V$ can change.
Let $Z$ be the multiset of all fixed weights outside $U\cup V$.
For fixed nonnegative integers $\ell,m,h$ with $\ell+m+h=r$ and $\ell+m>0$, the contribution of terms using $\ell$ vertices from $U$, $m$ vertices from $V$, and $h$ vertices from outside $U\cup V$ is a nonnegative common outside factor $e_h(Z)$ times
\[
 \binom N\ell \binom M m \alpha^\ell \beta^m,
\]
except that when $h=0$ the two monochromatic cases $(\ell,m)=(r,0)$ and $(0,r)$ are nonedges both before and after the switch.
Their contribution is therefore identically zero, and we omit this pair from all summations below.
We use the usual convention that $\binom ab=0$ when $b>a$.

It is enough to compare, for every unordered pair $\{\ell,m\}$ with $\ell\ge m$, the sum of the two corresponding contributions.
Write
\[
 C_{\ell,m}=\binom N\ell\binom M m N^{-\ell/t}M^{-m/t}.
\]
In terms of $Y_U,Y_V$, the pair of exponents $(\ell,m)$ contributes $C_{\ell,m}Y_U^{\ell/t}Y_V^{m/t}$, while the pair $(m,\ell)$ contributes $C_{m,\ell}Y_U^{m/t}Y_V^{\ell/t}$.

By \cref{lem:coeff-monotone}, and also trivially when $M<\ell$, we have
\[
 C_{\ell,m}\ge C_{m,\ell}\qquad (N\ge M,\ \ell\ge m).
\]
Since $Y_V>Y_U$ and $\ell\ge m$,
\[
 Y_V^{\ell/t}Y_U^{m/t}-Y_V^{m/t}Y_U^{\ell/t}\ge0.
\]
After the switch $Y_U$ and $Y_V$ exchange their positions, and the change in the paired contribution is
\begin{align*}
&\left(C_{\ell,m}Y_V^{\ell/t}Y_U^{m/t}+C_{m,\ell}Y_V^{m/t}Y_U^{\ell/t}\right)
-\left(C_{\ell,m}Y_U^{\ell/t}Y_V^{m/t}+C_{m,\ell}Y_U^{m/t}Y_V^{\ell/t}\right)\\
&\qquad=(C_{\ell,m}-C_{m,\ell})\left(Y_V^{\ell/t}Y_U^{m/t}-Y_V^{m/t}Y_U^{\ell/t}\right)\ge0.
\end{align*}
Multiplying by the nonnegative outside factor $e_h(Z)$ and summing over all choices of $h,\ell,m$ proves that $P_Q$ does not decrease.
\end{proof}

\begin{lemma}\label{lem:p-mass-order}
Let $p>1$ be a real number, let $\mathbf n=(n_1,\ldots,n_k)$ be an extremal tuple, and let $a_i$ be the common positive eigenvector value on $V_i$.
If $n_i\ge n_j$, then
\[
 n_i a_i^p\ge n_j a_j^p.
\]
\end{lemma}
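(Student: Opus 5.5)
We argue by contradiction, using \cref{lem:t-mass-switch} with $t=p$. If $n_i=n_j$, then \cref{lem:opposite-order} gives $a_i=a_j$, and the claim holds with equality. So assume $n_i>n_j$ and, for a contradiction, that $n_ia_i^p<n_ja_j^p$. Put $U=V_i$, $V=V_j$, $N=n_i>M=n_j$, $\alpha=a_i$, $\beta=a_j$, so that $Y_U=N\alpha^p<Y_V=M\beta^p$. By \cref{lem:no-small-class-unified} we have $M\ge r-1$, and by \cref{prop:structural-hypotheses} the vector $x$ is positive. Let $\widetilde x$ be obtained from $x$ by replacing $\alpha,\beta$ with $\widetilde\alpha,\widetilde\beta$ as in \cref{lem:t-mass-switch}. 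The $p$-mass of $U\cup V$ is $Y_U+Y_V$ both before and after the switch, so $\|\widetilde x\|_p=\|x\|_p=1$. \cref{lem:t-mass-switch} gives $P_Q(\widetilde x)\ge P_Q(x)=\lambda^{(p)}(Q)$. It therefore suffices to show that this inequality is \emph{strict}, which contradicts the definition of $\lambda^{(p)}(Q)$.

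For strictness, go back to the paired-difference identity in the proof of \cref{lem:t-mass-switch}. Every summand there is nonnegative, so it is enough to find one summand that is strictly positive. We take $h=0$ and $(\ell,m)=(r-1,1)$. The sets of this type are genuine edges, since they meet two classes, and their outside factor is $e_0(Z)=1>0$. This choice works even when $k=2$, where $Z$ is empty. The factor $Y_V^{(r-1)/p}Y_U^{1/p}-Y_V^{1/p}Y_U^{(r-1)/p}$ is strictly positive, because $Y_V>Y_U>0$ and $r-1>1$ (recall $r\ge3$).

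It remains to show $C_{r-1,1}>C_{1,r-1}$ strictly. A direct computation gives
\[
 \frac{C_{r-1,1}}{C_{1,r-1}}=\frac{F_{r-1,1,p}(N)}{F_{r-1,1,p}(M)}.
\]
Since $N>M\ge r-1$, both binomial coefficients are positive. Now look at the ratio estimate in the proof of \cref{lem:coeff-monotone}: each step ratio is at least $\bigl(z/(z-1)\bigr)^{(\ell-m)(1-1/t)}$. With $t=p>1$ and $\ell-m=r-2\ge1$, this bound is strictly greater than $1$. So $F_{r-1,1,p}$ is strictly increasing on integers $z\ge r-1$, and hence $C_{r-1,1}>C_{1,r-1}$.

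Multiplying the two strictly positive factors shows that this summand is strictly positive, so $P_Q(\widetilde x)>\lambda^{(p)}(Q)$, which is the desired contradiction. The main point needing care is exactly this strictness step. It is why we pick the term $(r-1,1)$ with $h=0$: this avoids any reliance on outside classes, and it uses $p>1$ to upgrade the monotonicity in \cref{lem:coeff-monotone} to strict monotonicity.
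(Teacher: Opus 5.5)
Your proof is correct, but it takes a different route from the paper's.

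\textbf{The paper's route.} The paper never applies \cref{lem:t-mass-switch} with $t=p$. It proves $N\alpha^t\ge M\beta^t$ for every $1\le t<p$, and then lets $t\uparrow p$. For $t<p$, switching the $t$-masses $Y_U<Y_V$ strictly lowers the $p$-norm, since the old-minus-new difference is $(Y_V^q-Y_U^q)(M^{1-q}-N^{1-q})>0$ with $q=p/t>1$. So the mere non-decrease of $P_Q$ given by the statement of \cref{lem:t-mass-switch} already yields a contradiction after rescaling to norm $1$.

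\textbf{Your route.} You switch the $p$-masses themselves, so $\|\widetilde x\|_p=1$ exactly, and strictness must therefore come from the polynomial. Your choice of the summand $h=0$, $(\ell,m)=(r-1,1)$ supplies it cleanly:
\begin{itemize}
\item its outside factor is $e_0(Z)=1$;
\item its $Y$-factor is $(Y_UY_V)^{1/p}\bigl(Y_V^{(r-2)/p}-Y_U^{(r-2)/p}\bigr)>0$;
\item the step-ratio bound $\bigl(z/(z-1)\bigr)^{(r-2)(1-1/p)}>1$ makes $F_{r-1,1,p}$ strictly increasing on integers $z\ge r-1$;
\item $M\ge r-1$ from \cref{lem:no-small-class-unified} ensures $C_{1,r-1}>0$, so the ratio $C_{r-1,1}/C_{1,r-1}=F_{r-1,1,p}(N)/F_{r-1,1,p}(M)$ is meaningful and exceeds $1$.
\end{itemize}

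\textbf{Trade-off.} The paper's argument needs no strictness analysis and uses \cref{lem:t-mass-switch,lem:coeff-monotone} only through their statements. In the case $n_i>n_j$ it needs no lower bound on class sizes. The cost is a one-parameter family of switches and a limiting step. Yours is a single-step contradiction at $t=p$. It relies on $r\ge3$, on $M\ge r-1$, and on opening up the proofs of those two lemmas to isolate one strictly positive paired term.

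\textbf{Minor point.} Do not cite \cref{prop:structural-hypotheses} for positivity of $x$: that proposition is proved using the present lemma, so the citation is formally circular. Positivity is part of the hypothesis here ($a_i$ is the common \emph{positive} value), and it also follows from \cref{lem:quoted-complete-chromatic-positivity}. You do need it, since $Y_U>0$ is what makes your $Y$-factor strictly positive.
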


\begin{proof}
If $n_i=n_j$, then $a_i=a_j$ by \cref{lem:opposite-order}, and the claim is immediate.
Assume $N:=n_i>M:=n_j$.
Let $\alpha=a_i$ and $\beta=a_j$.
We first prove that, for every $1\le t<p$,
\begin{equation}\label{eq:t-mass-order}
 N\alpha^t\ge M\beta^t.
\end{equation}
Suppose not.
Put $Y_i=N\alpha^t$ and $Y_j=M\beta^t$, so that $Y_i<Y_j$.
Switch the two $t$-masses as in \cref{lem:t-mass-switch}.
Thus the new class values $\widetilde\alpha,\widetilde\beta$ satisfy
\[
 N\widetilde\alpha^t=Y_j,
 \qquad
 M\widetilde\beta^t=Y_i.
\]
By \cref{lem:t-mass-switch}, the value of $P_Q$ does not decrease.

We now compare the $p$-norm.
Set $q=p/t>1$.
The old $p$-norm contribution of these two classes is
\[
 N\alpha^p+M\beta^p=N^{1-q}Y_i^q+M^{1-q}Y_j^q,
\]
whereas the new contribution is
\[
 N\widetilde\alpha^p+M\widetilde\beta^p=N^{1-q}Y_j^q+M^{1-q}Y_i^q.
\]
The difference old minus new is
\[
 (Y_j^q-Y_i^q)(M^{1-q}-N^{1-q})>0,
\]
because $Y_j>Y_i$, $N>M$, and $1-q<0$.
Thus the switched vector has $p$-norm strictly smaller than $1$, while its polynomial value is at least $P_Q(x)>0$.
Scaling it back to $p$-norm $1$ strictly increases the value of $P_Q$, contradicting the maximality of $x$ for $\lambda^{(p)}(Q)$.
Therefore \eqref{eq:t-mass-order} holds for every $1\le t<p$.

Letting $t\uparrow p$ in \eqref{eq:t-mass-order} gives $N\alpha^p\ge M\beta^p$, as required.
\end{proof}

\begin{lemma}\label{lem:one-vertex-transfer-pgreater}
Let $p>1$ be a real number, let $\mathbf n=(n_1,\ldots,n_k)$ be an extremal tuple, and order the classes so that $n_1\ge\cdots\ge n_k$.
Let $a_i$ be the common positive eigenvector value on $V_i$.
If $n_i\le n_1-1$, then
\[
 \binom{n_1-1}{r-1}a_1^{r-1}\le \binom{n_i}{r-1}a_i^{r-1}.
\]
Consequently,
\[
 \frac{a_i}{a_1}\ge \frac{n_1-1}{n_i}.
\]
\end{lemma}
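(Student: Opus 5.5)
The plan is a one-vertex transfer argument with the eigenvector held fixed, in the spirit of the proof of \cref{lem:no-small-class-unified}. Put $s=r-1$, let $Q=Q(\mathbf n)$, and let $x$ be the positive eigenvector, normalized by $\sum_v x_v^p=1$ and constant on classes by \cref{lem:quoted-complete-chromatic-positivity}. Fix $i$ with $n_i\le n_1-1$ (so $i\neq 1$) and a vertex $v\in V_1$. Let $Q'$ be the complete $k$-chromatic graph on the same labelled vertex set with colour classes $V_1\setminus\{v\}$, $V_i\cup\{v\}$, and $V_j$ for $j\ne 1,i$. Its class sizes form a nonnegative integer $k$-tuple summing to $n$. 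By extremality of $\mathbf n$ and \cref{lem:quoted-basic-variational}(ii),
\[
P_{Q'}(x)\le \lambda^{(p)}(Q')\le \lambda^{(p)}(Q)=P_Q(x).
\]

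Next I compute $P_{Q'}(x)-P_Q(x)$ exactly. An $r$-set not containing $v$ is an edge of $Q$ if and only if it is an edge of $Q'$, since its intersections with the classes are unchanged. For an $r$-set $\{v\}\cup S$ with $|S|=s$, membership can only change if $S$ lies inside a single class. If $S\subseteq V_1\setminus\{v\}$, the set is a nonedge in $Q$ and an edge in $Q'$. If $S\subseteq V_i$, it is an edge in $Q$ and a nonedge in $Q'$. If $S\subseteq V_j$ with $j\ne1,i$, it is an edge in both graphs. Therefore
\[
P_{Q'}(x)-P_Q(x)=r!\,a_1\left(\binom{n_1-1}{s}a_1^{s}-\binom{n_i}{s}a_i^{s}\right).
\]
Since $a_1>0$, the displayed inequality $P_{Q'}(x)\le P_Q(x)$ immediately gives $\binom{n_1-1}{r-1}a_1^{r-1}\le\binom{n_i}{r-1}a_i^{r-1}$, which is the first claim.

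For the consequence, \cref{lem:no-small-class-unified} gives $n_i\ge s$, so $\binom{n_i}{s}>0$, and we may write
\[
\Bigl(\frac{a_i}{a_1}\Bigr)^{s}\ge\frac{\binom{n_1-1}{s}}{\binom{n_i}{s}}=\prod_{q=0}^{s-1}\frac{n_1-1-q}{n_i-q}.
\]
For $A=n_1-1\ge B=n_i>q\ge0$ we have $(A-q)/(B-q)\ge A/B$, because $(A-q)B-(B-q)A=q(A-B)\ge0$. Hence the product is at least $\bigl((n_1-1)/n_i\bigr)^{s}$, and taking $s$-th roots yields $a_i/a_1\ge(n_1-1)/n_i$. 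The endpoint case $n_i=s$ is harmless, since the factor with $q=s-1$ has denominator $n_i-s+1=1$.

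The only delicate step is the exact bookkeeping of which $r$-sets through $v$ change status. In particular one must check that sets meeting two or more classes are edges both before and after the move, so that only monochromatic $(r-1)$-sets contribute. Everything else is routine.
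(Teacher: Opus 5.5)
Your proposal is correct and follows essentially the same route as the paper: move one vertex from $V_1$ to $V_i$ while keeping the eigenvector fixed, and use extremality to get $\binom{n_1-1}{r-1}a_1^{r}\le\binom{n_i}{r-1}a_1a_i^{r-1}$. You then finish, as the paper does, by invoking $n_i\ge r-1$ from \cref{lem:no-small-class-unified} and the factorwise bound $(N-q)/(M-q)\ge N/M$. Your explicit check that $r$-sets through $v$ meeting two classes keep their status only adds detail the paper leaves implicit.
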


\begin{proof}
Move one vertex $v$ from the largest class $V_1$ to $V_i$ and keep the old eigenvector on the same labelled vertices.
Since the original tuple is extremal, the new graph cannot give a larger test value.
The only changed edges are the following.
The new graph gains all sets
\[
 \{v\}\cup S,\qquad S\in (V_1\setminus\{v\})^{(r-1)},
\]
and loses all sets
\[
 \{v\}\cup T,
 \qquad
 T\in V_i^{(r-1)}.
\]
Therefore
\[
 \binom{n_1-1}{r-1}a_1^r\le \binom{n_i}{r-1}a_1a_i^{r-1}.
\]
Dividing by $a_1>0$ gives the first assertion.

Now put $s=r-1$, $N=n_1-1$, and $M=n_i$.
By \cref{lem:no-small-class-unified}, $M\ge s$, and because $n_i\le n_1-1$ we have $N\ge M$.
Hence
\[
 \frac{\binom Ns}{\binom Ms}=\prod_{q=0}^{s-1}\frac{N-q}{M-q}\ge \left(\frac NM\right)^s,
\]
since $(N-q)/(M-q)\ge N/M$ for every $0\le q\le s-1$.
The first assertion therefore implies
\[
 \left(\frac{a_i}{a_1}\right)^s\ge \frac{\binom Ns}{\binom Ms}\ge\left(\frac NM\right)^s.
\]
Taking $s$th roots gives
\[
 \frac{a_i}{a_1}\ge \frac NM=\frac{n_1-1}{n_i}.
\]
\end{proof}

We are now ready to present the proof of Proposition~\ref{prop:structural-hypotheses}. 

\begin{proof}[Proof of Proposition~\ref{prop:structural-hypotheses}]
Since $n>(r-1)k$, the extremal graph $Q(\mathbf n)$ has at least one edge.
Hence \cref{lem:quoted-complete-chromatic-positivity} applies: every nonnegative eigenvector is positive and is constant on each nonempty class.
\Cref{lem:no-small-class-unified} further gives $n_i\ge r-1$ for every $i$, so every class has positive size.

Relabel the classes so that $n_1\ge n_2\ge\cdots\ge n_k$.
Then \cref{lem:opposite-order} gives $0<a_1\le a_2\le\cdots\le a_k$, proving (S1).
For every $i$, \cref{lem:p-mass-order} gives
\[
 n_1a_1^p\ge n_i a_i^p.
\]
If $n_i=n_1$, then $a_i=a_1$ by \cref{lem:opposite-order}, and $n_i a_i^p=n_1a_1^p\ge (n_1-1)a_1^p$.
If $n_i\le n_1-1$, then \cref{lem:one-vertex-transfer-pgreater} gives
\[
 a_i\ge \frac{n_1-1}{n_i}a_1.
\]
Thus
\[
 n_i a_i^p\ge n_i\left(\frac{n_1-1}{n_i}\right)^p a_1^p
=(n_1-1)\left(\frac{n_1-1}{n_i}\right)^{p-1}a_1^p\ge (n_1-1)a_1^p.
\]
This proves (S2).

It remains to prove the diameter bound.
Let $d=n_1-n_k$.
If $d=0$, there is nothing to prove.
Assume $d\ge1$.
By \cref{lem:one-vertex-transfer-pgreater}, applied with $i=k$,
\[
 \frac{a_k}{a_1}\ge \frac{n_1-1}{n_k}.
\]
On the other hand, by \cref{lem:p-mass-order},
\[
 n_k a_k^p\le n_1a_1^p,
\]
and hence
\[
 \frac{a_k}{a_1}\le \left(\frac{n_1}{n_k}\right)^{1/p}.
\]
Combining these two estimates gives
\[
 \frac{n_1-1}{n_k}\le \left(\frac{n_1}{n_k}\right)^{1/p}.
\]
Writing $n_1=n_k+d$, this becomes
\[
 1+\frac{d-1}{n_k}\le \left(1+\frac d{n_k}\right)^{1/p}.
\]
Since $p>1$, the function $z\mapsto z^{1/p}$ is strictly concave on $(0,\infty)$, and therefore
\[
 \left(1+\frac d{n_k}\right)^{1/p}<1+\frac d{p n_k}.
\]
Thus $d-1<d/p$, or equivalently
\[
 d<\frac p{p-1}=1+\frac1{p-1}.
\]
Since $d$ is an integer,
\[
 d\le \left\lceil\frac1{p-1}\right\rceil,
\]
which proves (S3).
\end{proof}

\section{Proof of Theorem~\ref{thm:main} for $p > 1$}
\label{sec:smoothing}

We now prove the local smoothing result that will force an extremal complete $k$-chromatic graph to be balanced.
The local argument uses only the structural properties from \cref{prop:structural-hypotheses}.

Let $m\ge1$.
Consider two local blocks $C$ and $B$ of sizes
\[
 \abs C=m+2,
 \qquad
 \abs B=m.
\]
Let their total $p$-masses be $S_C>0$ and $S_B>0$.
We assume
\begin{equation}\label{eq:rho-assumption}
 \rho:=\frac{S_B}{S_C}\ge \frac{m+1}{m+2}.
\end{equation}
The gap-two smoothing replaces the sizes $(m+2,m)$ by $(m+1,m+1)$ while preserving the two block masses.
Thus the old point $p$-masses are
\[
 \frac{S_C}{m+2}\quad\text{on }C,
 \qquad
 \frac{S_B}{m}\quad\text{on }B,
\]
and the new point $p$-masses are
\[
 \frac{S_C}{m+1}\quad\text{on the new }C\text{-block},
 \qquad
 \frac{S_B}{m+1}\quad\text{on the new }B\text{-block}.
\]

\subsection{Full local layers}

We use the standard majorization notation $u\succ v$.

\begin{lemma}\label{lem:gap-two-majorization}
Under \eqref{eq:rho-assumption}, the old local $p$-mass vector majorizes the new local $p$-mass vector.
\end{lemma}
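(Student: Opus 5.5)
We compare two vectors of length $2m+2$. The old vector has $m+2$ entries equal to $c:=S_C/(m+2)$ and $m$ entries equal to $b:=S_B/m$. The new vector has $m+1$ entries equal to $c':=S_C/(m+1)$ and $m+1$ entries equal to $b':=S_B/(m+1)$. Both have total $S_C+S_B$, so only the partial-sum inequalities need checking. The plan is to order the entries and compare partial sums of the decreasingly sorted vectors, using the concavity of the piecewise linear partial-sum functions.

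First we order the entries. By \eqref{eq:rho-assumption}, $S_B\ge \frac{m+1}{m+2}S_C$. This gives
\[
b=\frac{S_B}{m}\ge \frac{(m+1)S_C}{m(m+2)}>\frac{S_C}{m+1}=c' ,
\]
where the strict inequality is $(m+1)^2>m(m+2)$. We also have $c'>c$, $b'<b$, and $b'=S_B/(m+1)\ge S_C/(m+2)=c$. So the old sorted vector is $(b^{(m)},c^{(m+2)})$, with $b$ the largest entry overall and $c$ the smallest. The new sorted vector consists of $m+1$ copies each of $\max(b',c')$ and $\min(b',c')$, all lying in $[c,b]$.

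Next we compare partial sums. Let $\Phi_{\rm old}(j)$ and $\Phi_{\rm new}(j)$ be the sums of the $j$ largest entries. Both are concave piecewise linear functions of $j\in[0,2m+2]$, and they agree at $j=0$ and $j=2m+2$. The function $\Phi_{\rm old}$ has slope $b$ on $[0,m]$ and slope $c$ on $[m,2m+2]$. Since every new entry is at most $b$, we get $\Phi_{\rm new}(j)\le bj=\Phi_{\rm old}(j)$ for $j\le m$. For $j\ge m$, every new entry is at least $c$, so the remaining tail satisfies $\Phi_{\rm new}(j)=\text{total}-(\text{sum of }2m+2-j\text{ smallest new entries})\le \text{total}-c(2m+2-j)=\Phi_{\rm old}(j)$. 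This proves $\Phi_{\rm old}\ge\Phi_{\rm new}$ everywhere, which is the majorization.

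The only delicate point is the ordering step, namely checking $b\ge$ every new entry and $c\le$ every new entry. For the bound $b\ge c'$ this is exactly where hypothesis \eqref{eq:rho-assumption} is used. The remaining comparisons are immediate from the definitions.
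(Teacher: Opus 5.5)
Your proof is correct, and it is organized differently from the paper's.

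The paper normalizes $S_C=1$ and splits into the cases $\rho\ge1$ and $\frac{m+1}{m+2}\le\rho\le1$, according to which of $S_B/(m+1)$ and $S_C/(m+1)$ is the larger new entry. In each case it writes down the partial sums of both decreasing rearrangements explicitly and checks the differences by hand; in the second case this uses monotonicity in $\rho$ and vanishing at $\rho=\frac{m+1}{m+2}$.

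You instead observe that the old vector is two-valued with values $b\ge c$ and that every new entry lies in $[c,b]$. You then use the general fact that such a two-valued vector majorizes any equal-sum vector with entries in $[c,b]$. For $j\le m$ the top partial sums are capped by $bj$. For $j\ge m$ the $2m+2-j$ smallest new entries sum to at least $c(2m+2-j)$, which is exactly the corresponding old tail.

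This removes the case split entirely and isolates what the hypothesis is needed for. It is also the same mechanism the paper invokes without proof in \cref{lem:pone-robin} at $p=1$, so your argument justifies that lemma too. The paper's explicit computation gives the exact partial-sum gaps, but nothing downstream uses them. The strict majorization needed later in \cref{lem:pure-cross} only requires that the two vectors are not permutations of each other, which your ordering already shows: the largest new entry is strictly smaller than $b$.

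One small correction to your closing remark. The sharp use of \eqref{eq:rho-assumption} is the bound $b'\ge c$, which is equivalent to it. The bound $b\ge c'$ only needs $S_B/S_C\ge m/(m+1)$. The inequalities you actually wrote down are all correct, so this affects only the commentary, not the proof.
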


\begin{proof}
By homogeneity set $S_C=1$ and $S_B=\rho$.
Put
\[
 x=\frac1{m+2},
 \qquad
 y=\frac\rho m,
 \qquad
 x^\#=\frac1{m+1},
 \qquad
 y^\#=\frac\rho{m+1}.
\]
The old vector has $m$ copies of $y$ and $m+2$ copies of $x$.
The new vector has $m+1$ copies of $x^\#$ and $m+1$ copies of $y^\#$.
Since $\rho\ge (m+1)/(m+2)$, we have $y\ge x$.

First assume $\rho\ge1$.
Then $y^\#\ge x^\#$.
For $1\le j\le m$, the sum of the largest $j$ old coordinates is $jy$, whereas the sum of the largest $j$ new coordinates is $jy^\#\le jy$.
If $j=m+t$ with $1\le t\le m+2$, the old partial sum is
\[
 \rho+\frac{t}{m+2},
\]
while the new partial sum is at most
\[
 \rho+\frac{t-1}{m+1}.
\]
The difference is
\[
 \frac{t}{m+2}-\frac{t-1}{m+1}=\frac{m+2-t}{(m+1)(m+2)}\ge0.
\]

Now assume $(m+1)/(m+2)\le \rho\le1$.
Then the new decreasing rearrangement begins with $m+1$ copies of $x^\#$.
If $1\le j\le m$, then $jy\ge jx^\#$ because $\rho/m\ge1/(m+1)$.
If $j=m+1$, the old partial sum is
\[
 \rho+\frac1{m+2}\ge1,
\]
which equals the new partial sum.
Finally, if $j=m+1+t$ with $0\le t\le m+1$, then the old minus new partial sum is
\[
 \rho+\frac{t+1}{m+2}-1-\frac{t\rho}{m+1}.
\]
This expression is increasing in $\rho$, and at $\rho=(m+1)/(m+2)$ it equals $0$.
Hence it is nonnegative throughout the interval.
Thus every partial sum of the old decreasing rearrangement is at least the corresponding partial sum of the new decreasing rearrangement.
The total sums are equal, so the old vector majorizes the new one.
\end{proof}

\begin{lemma}
\label{lem:standard-schur-elementary}
Let $0<\alpha<1$.
Then
\[
        z\mapsto e_s(z_1^\alpha,\ldots,z_N^\alpha)
\]
is Schur-concave on $\mathbb R_+^N$.
Moreover, it is strictly Schur-concave on the positive orthant whenever $1\le s\le N$.
\end{lemma}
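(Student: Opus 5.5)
We plan to verify the Schur--Ostrowski criterion. The function
\[
F(z)=e_s(z_1^\alpha,\ldots,z_N^\alpha)
\]
is symmetric and continuous on $\mathbb R_+^N$, and it is $C^1$ on the open orthant. So Schur-concavity on the open positive orthant follows once we show that
\[
(z_i-z_j)\Bigl(\frac{\partial F}{\partial z_i}-\frac{\partial F}{\partial z_j}\Bigr)\le 0
\]
for all $i\ne j$. We then pass to the closed orthant $\mathbb R_+^N$ by continuity. Majorization is preserved under limits in the sense that, if $u\succ v$ with zero entries, we can approximate both $u$ and $v$ by positive vectors $u_\varepsilon\succ v_\varepsilon$, for instance by adding $\varepsilon$ to every coordinate.

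\textbf{Computing the derivatives.} Fix $i\ne j$ and let $W$ be the multiset $\{z_\ell^\alpha:\ell\ne i,j\}$. Expanding $e_s$ in the two variables $w_i=z_i^\alpha$ and $w_j=z_j^\alpha$ gives
\[
F=e_s(W)+(w_i+w_j)e_{s-1}(W)+w_iw_je_{s-2}(W).
\]
Differentiating,
\[
\partial_{z_i}F=\alpha z_i^{\alpha-1}\bigl(e_{s-1}(W)+w_je_{s-2}(W)\bigr),
\]
and symmetrically for $\partial_{z_j}F$. Subtracting, we get
\[
\partial_{z_i}F-\partial_{z_j}F=\alpha\bigl(z_i^{\alpha-1}-z_j^{\alpha-1}\bigr)e_{s-1}(W)+\alpha e_{s-2}(W)\bigl(z_i^{\alpha-1}z_j^{\alpha}-z_j^{\alpha-1}z_i^{\alpha}\bigr).
\]
The last bracket equals $(z_iz_j)^{\alpha-1}(z_j-z_i)$. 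Since $\alpha-1<0$, the map $z\mapsto z^{\alpha-1}$ is strictly decreasing. Hence both terms have sign opposite to $z_i-z_j$, using $e_{s-1}(W),e_{s-2}(W)\ge0$. This proves the Schur--Ostrowski inequality.

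\textbf{Strictness.} For strictness we need at least one of the two terms to be strictly nonzero when $z_i\ne z_j$:
\begin{itemize}[label=--]
\item if $s=1$, then $e_0(W)=1>0$, so the first term is strictly nonzero;
\item if $s\ge2$, then $e_{s-2}(W)>0$, because $W$ has $N-2\ge s-2$ positive entries, so the second term is strictly nonzero.
\end{itemize}
Thus the Schur--Ostrowski inequality is strict whenever $z_i\ne z_j$. The standard consequence is that $F(u)<F(v)$ whenever $u\succ v$ and $u$ is not a permutation of $v$. One way to see this is to write $v$ as obtained from $u$ by a finite chain of T-transforms (Robin Hood transfers) and integrate the strictly signed directional derivative along each transfer segment. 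The segments stay inside the positive orthant, since convex combinations of positive entries remain positive.

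\textbf{Main obstacle.} The delicate points are the boundary, where $z^{\alpha-1}$ blows up and $F$ is not differentiable, and the precise form of strictness. We handle the boundary by the approximation above and only claim strictness in the open orthant. For strictness we rely on the T-transform decomposition rather than on the bare Schur--Ostrowski criterion.
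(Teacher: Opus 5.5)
Your proposal is correct and follows essentially the same route as the paper: the Schur--Ostrowski criterion on the open orthant with the identical derivative computation, the shift by $\varepsilon\mathbf 1$ to pass to the closed orthant, and strictness via a finite chain of nontrivial Robin-Hood transfers that stay in the positive orthant. Your strictness justification for $s\ge2$ (namely $e_{s-2}(W)>0$ because $|W|=N-2\ge s-2$) is a slightly sharper form of the paper's remark that at least one of $e_{s-1}(W)$, $e_{s-2}(W)$ is positive.
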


\begin{proof}
Write
\[
        F_s(z_1,\ldots,z_N)=e_s(z_1^\alpha,\ldots,z_N^\alpha).
\]
The case $s=0$ is trivial, since $F_0\equiv1$.
We first prove Schur-concavity on the positive orthant.
The function $F_s$ is symmetric and continuously differentiable on $(0,\infty)^N$.
By the Schur derivative criterion, it is enough to prove that, for every $i\ne j$,
\[
        (z_i-z_j)\left(
        \frac{\partial F_s}{\partial z_i}
        -
        \frac{\partial F_s}{\partial z_j}
        \right)\le 0.
\]
Fix $i\ne j$.
Let $W$ denote the multiset
\[
        W=\{z_\ell^\alpha:\ell\in[N]\setminus\{i,j\}\}.
\]
We use the conventions $e_0(W)=1$ and $e_q(W)=0$ if $q<0$ or $q>|W|$.
For $s\ge1$, differentiating the elementary symmetric sum gives
\[
\frac{\partial F_s}{\partial z_i}
=
\alpha z_i^{\alpha-1}
\left(e_{s-1}(W)+z_j^\alpha e_{s-2}(W)\right),
\]
and similarly
\[
\frac{\partial F_s}{\partial z_j}
=
\alpha z_j^{\alpha-1}
\left(e_{s-1}(W)+z_i^\alpha e_{s-2}(W)\right).
\]
Hence
\[
\begin{aligned}
\frac{\partial F_s}{\partial z_i}
-
\frac{\partial F_s}{\partial z_j}
&=
\alpha\bigl(z_i^{\alpha-1}-z_j^{\alpha-1}\bigr)e_{s-1}(W)  \\
&\quad
+\alpha\bigl(z_i^{\alpha-1}z_j^\alpha
      -z_j^{\alpha-1}z_i^\alpha\bigr)e_{s-2}(W).
\end{aligned}
\]
Assume, without loss of generality, that $z_i>z_j$.
Since $\alpha-1<0$, we have
\[
        z_i^{\alpha-1}-z_j^{\alpha-1}<0.
\]
Moreover,
\[
        z_i^{\alpha-1}z_j^\alpha
        -z_j^{\alpha-1}z_i^\alpha
        =
        z_i^{\alpha-1}z_j^{\alpha-1}(z_j-z_i)<0.
\]
Since the elementary symmetric sums of the positive multiset $W$ are nonnegative, it follows that
\[
        \frac{\partial F_s}{\partial z_i}
        -
        \frac{\partial F_s}{\partial z_j}
        \le 0.
\]
Thus
\[
        (z_i-z_j)\left(
        \frac{\partial F_s}{\partial z_i}
        -
        \frac{\partial F_s}{\partial z_j}
        \right)\le0.
\]
The Schur derivative criterion therefore proves that $F_s$ is Schur-concave on $(0,\infty)^N$.

The extension to $\mathbb R_+^N$ follows by continuity.
Indeed, if $u,v\in\mathbb R_+^N$ and $u\succ v$, then for every $\varepsilon>0$ we also have
\[
        u+\varepsilon\mathbf 1\succ v+\varepsilon\mathbf 1,
\]
and both vectors lie in $(0,\infty)^N$.
Applying the already proved positive-orthant case and letting $\varepsilon\downarrow0$ gives
\[
        F_s(u)\le F_s(v).
\]
Thus $F_s$ is Schur-concave on $\mathbb R_+^N$.

It remains to prove the strict statement.
Let $1\le s\le N$, and let all coordinates be positive.
We claim that every nontrivial Robin-Hood transfer strictly increases $F_s$.
Suppose $z_i>z_j$, and replace $(z_i,z_j)$ by
\[
        (z_i-\delta,\ z_j+\delta),
        \qquad
        0<\delta\le \frac{z_i-z_j}{2},
\]
leaving all other coordinates fixed.
Along the path
\[
        \phi(t)
        =
        F_s(z_1,\ldots,z_i-t,\ldots,z_j+t,\ldots,z_N),
        \qquad
        0\le t\le\delta,
\]
we have, for $0\le t<\delta$,
\[
        z_i-t>z_j+t.
\]
The derivative computation above gives
\[
        \phi'(t)
        =
        \frac{\partial F_s}{\partial z_j}
        -
        \frac{\partial F_s}{\partial z_i}
        >0.
\]
Here the inequality is strict because $1\le s\le N$ and the remaining coordinates are positive: if $s=1$, the strictness follows from the strict concavity of $x^\alpha$; if $2\le s\le N$, then at least one of $e_{s-1}(W)$ and $e_{s-2}(W)$ is positive.
Consequently,
\[
        \phi(\delta)>\phi(0).
\]
Thus every nontrivial Robin-Hood transfer strictly increases $F_s$.

Finally, if $u,v\in(0,\infty)^N$, $u\succ v$, and $v$ is not a permutation of $u$, then by the standard Robin-Hood characterization of majorization, $v$ can be obtained from $u$ by a finite sequence of nontrivial Robin-Hood transfers, with all intermediate vectors remaining in the positive orthant.
Since $F_s$ strictly increases at each step, we get
\[
        F_s(u)<F_s(v).
\]
This proves the strict Schur-concavity on the positive orthant.
\end{proof}

\begin{corollary}\label{cor:full-layers}
Let $p>1$ be a real number, put $\alpha=1/p$, and let $m\ge1$.
Let $A,B>0$ satisfy
\[
\frac BA\ge \frac{m+1}{m+2}.
\]
Set
\[
u=
\left(
\underbrace{\frac{A}{m+2},\ldots,\frac{A}{m+2}}_{m+2},
\underbrace{\frac{B}{m},\ldots,\frac{B}{m}}_{m}
\right)
\]
and
\[
v=
\left(
\underbrace{\frac{A}{m+1},\ldots,\frac{A}{m+1}}_{m+1},
\underbrace{\frac{B}{m+1},\ldots,\frac{B}{m+1}}_{m+1}
\right).
\]
Then, for every $1\le s\le 2m+2$,
\[
e_s(u_1^\alpha,\ldots,u_{2m+2}^\alpha)
\le
e_s(v_1^\alpha,\ldots,v_{2m+2}^\alpha).
\]
Equivalently,
\[
\begin{aligned}
&\sum_{\ell=0}^{s}
\binom{m+2}{\ell}\binom{m}{s-\ell}
\left(\frac{A}{m+2}\right)^{\alpha\ell}
\left(\frac{B}{m}\right)^{\alpha(s-\ell)}
\\
&\qquad\le
\sum_{\ell=0}^{s}
\binom{m+1}{\ell}\binom{m+1}{s-\ell}
\left(\frac{A}{m+1}\right)^{\alpha\ell}
\left(\frac{B}{m+1}\right)^{\alpha(s-\ell)}.
\end{aligned}
\]
\end{corollary}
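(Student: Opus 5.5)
The corollary follows directly from the two preceding lemmas, applied with $N=2m+2$ and $\alpha=1/p\in(0,1)$.

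First, observe that $u$ and $v$ are exactly the old and new local $p$-mass vectors of the gap-two smoothing, with $S_C=A$ and $S_B=B$. The hypothesis $B/A\ge (m+1)/(m+2)$ is precisely \eqref{eq:rho-assumption} with $\rho=B/A$. Hence \cref{lem:gap-two-majorization} yields $u\succ v$. Both vectors have the same length $2m+2$ and the same total $A+B$, so majorization is meaningful here.

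Second, apply \cref{lem:standard-schur-elementary}. Since $p>1$, we have $0<\alpha<1$, so the map $z\mapsto e_s(z_1^\alpha,\ldots,z_{2m+2}^\alpha)$ is Schur-concave on $\mathbb R_+^{2m+2}$ for every $s$. From $u\succ v$ we therefore get $e_s(u^\alpha)\le e_s(v^\alpha)$ for all $1\le s\le 2m+2$, which is the first claimed inequality.

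Third, rewrite both sides in the stated closed form. In $u$ there are $m+2$ equal coordinates $A/(m+2)$ and $m$ equal coordinates $B/m$. An $s$-subset of coordinates consists of $\ell$ coordinates from the first group and $s-\ell$ from the second. There are $\binom{m+2}{\ell}\binom{m}{s-\ell}$ such subsets, each contributing the product $(A/(m+2))^{\alpha\ell}(B/m)^{\alpha(s-\ell)}$. The same count for $v$ gives the right-hand side with $\binom{m+1}{\ell}\binom{m+1}{s-\ell}$. With the convention $\binom{a}{b}=0$ for $b>a$, the out-of-range terms vanish automatically.

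No step is a genuine obstacle, since the work is done in the two lemmas. The only point needing care is matching normalizations: $u$ and $v$ here are $p$-masses, that is, $p$-th powers of the vector values, and taking $\alpha=1/p$ recovers the actual values $u_i^{1/p}$. This is exactly the form in which \cref{lem:standard-schur-elementary} was stated.
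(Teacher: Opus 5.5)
Your proposal is correct and follows essentially the same route as the paper: obtain $u\succ v$ from \cref{lem:gap-two-majorization} (with $S_C=A$, $S_B=B$), apply the Schur-concavity of $z\mapsto e_s(z_1^\alpha,\ldots,z_{2m+2}^\alpha)$ from \cref{lem:standard-schur-elementary}, and expand both elementary symmetric sums by counting coordinates in each block. Your extra remarks on equal totals and the binomial convention $\binom ab=0$ for $b>a$ are consistent with the paper's conventions.
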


\begin{proof}
By \cref{lem:gap-two-majorization}, $u\succ v$.
By \cref{lem:standard-schur-elementary}, the function
\[
z\mapsto e_s(z_1^\alpha,\ldots,z_{2m+2}^\alpha)
\]
is Schur-concave on $\mathbb R_{\ge0}^{2m+2}$.
Hence
\[
e_s(u_1^\alpha,\ldots,u_{2m+2}^\alpha)
\le
e_s(v_1^\alpha,\ldots,v_{2m+2}^\alpha).
\]
Expanding both elementary symmetric sums gives the displayed binomial formula.
\end{proof}



\subsection{One-sided local layers}

\begin{definition}
\label{def:local-layers}
Let $C,B$ be disjoint weighted vertex sets.
Define
\[
F_s(C,B)
=\sum_{\substack{T\subseteq C\cup B\\ |T|=s}}\prod_{v\in T}w_v,
\]
\[
O_s(C,B)=
\sum_{\substack{T\subseteq C\cup B\\ |T|=s\\ T\not\subseteq C}}
\prod_{v\in T}w_v,
\]
and
\[
X_s(C,B)=
\sum_{\substack{T\subseteq C\cup B\\ |T|=s\\
T\cap C\ne\emptyset,\ T\cap B\ne\emptyset}}
\prod_{v\in T}w_v.
\]
\end{definition}

In the gap-two smoothing step, the old one-sided layer has
\[
|C|=m+2,\qquad |B|=m,
\]
while the smoothed one has
\[
|C^\#|=|B^\#|=m+1.
\]
Thus \cref{lem:one-sided} compares $O_s(C,B)$ with $O_s(C^\#,B^\#)$, where the forbidden side is $C$ before smoothing and $C^\#$ after smoothing.

\begin{lemma}\label{lem:one-sided}
Suppose all $s$-subsets of the two local blocks are allowed except those contained entirely in the old $C$-block.
Then the corresponding one-sided local layer of rank $s$ does not decrease under the gap-two smoothing.
If $1\le s\le m+2$, then the one-sided layer increases strictly.
\end{lemma}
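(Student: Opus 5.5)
The plan is to write the one-sided layer as the full layer minus its monochromatic part,
\[
O_s(C,B)=F_s(C,B)-e_s(C),
\]
and to compare the two pieces separately. Here the weights are $w_v=(\text{point }p\text{-mass})^{1/p}$. So before smoothing the weights are $(S_C/(m+2))^{1/p}$ on $C$ and $(S_B/m)^{1/p}$ on $B$. After smoothing they are $(S_C/(m+1))^{1/p}$ on $C^\#$ and $(S_B/(m+1))^{1/p}$ on $B^\#$. It therefore suffices to show two things: $F_s$ does not decrease, and the subtracted monochromatic term $e_s(C)$ does not increase.

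\emph{Full layer.} Apply \cref{cor:full-layers} with $A=S_C$, $B=S_B$ and $\alpha=1/p$. Its hypothesis $B/A\ge (m+1)/(m+2)$ is exactly \eqref{eq:rho-assumption}. This gives
\[
F_s(C,B)=e_s(u^\alpha)\le e_s(v^\alpha)=F_s(C^\#,B^\#)
\]
for all $s$. For $s=0$ and for $s>2m+2$ both sides coincide trivially.

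\emph{Monochromatic term.} We have
\[
e_s(C)=\binom{m+2}{s}\Bigl(\tfrac{S_C}{m+2}\Bigr)^{s/p}
\quad\text{and}\quad
e_s(C^\#)=\binom{m+1}{s}\Bigl(\tfrac{S_C}{m+1}\Bigr)^{s/p}.
\]
Up to the common factor $S_C^{s/p}$, these are the values at $z=m+2$ and $z=m+1$ of
\[
F_{s,0,p}(z)=\binom zs z^{-s/p}.
\]
By \cref{lem:coeff-monotone} with $t=p\ge1$, this function is nondecreasing, so $e_s(C)\ge e_s(C^\#)$. If $s>m+1$, the right side is $0$ and the inequality is trivial. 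Subtracting the two comparisons gives
\[
O_s(C,B)\le O_s(C^\#,B^\#).
\]

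\emph{Strictness.} This is the only delicate point, and I will get it from the full-layer term. For $1\le s\le m+2\le 2m+2$, \cref{lem:standard-schur-elementary} gives strict Schur-concavity on the positive orthant. We have $u\succ v$ by \cref{lem:gap-two-majorization}, so it remains to check that $v$ is not a permutation of $u$.

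Look at multiplicities of values. The vector $v$ either has two distinct values each of multiplicity $m+1$, or is constant, which happens only if $A=B$. The vector $u$ either has values of multiplicities $m+2$ and $m$, or is constant, which happens only if $A/(m+2)=B/m$.

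These patterns cannot match. Two distinct values with multiplicities $m+2,m$ never match $m+1,m+1$. The two constant cases would require both $A=B$ and $A/(m+2)=A/m$, which is impossible since $A>0$.

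Hence $F_s$ increases strictly, and combined with $e_s(C)\ge e_s(C^\#)$ the one-sided layer increases strictly for $1\le s\le m+2$.
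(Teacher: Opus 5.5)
Your proof is correct. Its skeleton matches the paper's: you write the layer as the full layer minus the all-$C$ defect, use \cref{cor:full-layers} for the full part, and use monotonicity of $M\mapsto\binom Ms M^{-s/p}$ for the defect.

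The genuine difference is where strictness comes from. The paper uses only the non-strict full-layer inequality and gets strictness from the defect. It re-proves the monotonicity of $\binom Ms M^{-s/p}$ directly, in strict form, via Bernoulli's inequality. For $s=m+2$ it uses that the new defect vanishes because $\binom{m+1}{m+2}=0$, while the old one is positive. This is why the strict range in the statement is $1\le s\le m+2$: exactly the ranks at which a forbidden all-$C$ set exists.

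You instead cite the non-strict \cref{lem:coeff-monotone} for the defect and take strictness from the strict part of \cref{lem:standard-schur-elementary}. That obliges you to check that $v$ is not a permutation of $u$. Your multiplicity count does this correctly: two values with multiplicities $m+2,m$ versus $m+1,m+1$, and the two constant cases are incompatible because $A>0$.

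Your route reuses lemmas already in the paper and in fact gives strict increase for every $1\le s\le 2m+2$. The price is dependence on strict Schur-concavity and the Robin--Hood decomposition behind it. The paper's route is more elementary, needing only non-strict majorization, and ties strictness directly to the forbidden sets.

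Incidentally, your permutation check also supplies the justification the paper leaves implicit when it invokes strict Schur-concavity in the subcase $m<s-2$ of \cref{lem:pure-cross}.
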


\begin{proof}
If $s>2m+2$, then both the old and the new local layers are zero, so there is nothing to prove.
Hence we may assume $0\le s\le2m+2$.

For $s=0$, both one-sided layers are zero.
For $s\ge1$, write
\[
 O_s=F_s-\binom{m+2}{s}\left(\frac{S_C}{m+2}\right)^{s/p}
\]
for the old layer and
\[
 O_s^\#=F_s^\#-\binom{m+1}{s}\left(\frac{S_C}{m+1}\right)^{s/p}
\]
for the new layer.
By \cref{cor:full-layers}, $F_s^\#\ge F_s$.

It remains to observe that, for fixed $S_C$, the defect
\[
 M\longmapsto \binom M s\left(\frac{S_C}{M}\right)^{s/p}=S_C^{s/p}\binom M s M^{-s/p}
\]
is increasing in the integer variable $M\ge s$.
Indeed, for $M>s$ the ratio of consecutive values is
\[
 \frac{M}{M-s}\left(\frac{M-1}{M}\right)^{s/p}.
\]
We claim this ratio is larger than $1$.
Since $0<1-1/M<1$ and $s/p<s$, we have
\[
 \left(1-\frac1M\right)^{s/p}>\left(1-\frac1M\right)^s.
\]
Bernoulli's inequality for the integer exponent $s$ gives
\[
 \left(1-\frac1M\right)^s\ge 1-\frac{s}{M}=\frac{M-s}{M}.
\]
Hence the displayed ratio is strictly larger than $1$.
Thus the defect is strictly increasing in $M$ on the integer range $M\ge s$.

If $s=m+2$, then the old all-$C$ defect is positive, whereas the new all-$C$ defect is zero because $\binom{m+1}{s}=0$.
If $1\le s\le m+1$, then both $m+1$ and $m+2$ lie in the range $M\ge s$, and the monotonicity just proved shows that the old defect is strictly larger than the new defect.
Hence the forbidden all-$C$ defect strictly decreases when $m+2$ is replaced by $m+1$, whenever $1\le s\le m+2$.
The lemma follows.
\end{proof}

\subsection{Pure cross layers}

The pure cross layer consists of the $s$-subsets which meet both local blocks.
This is the only local layer whose proof requires a more delicate argument.

\begin{lemma}\label{lem:stop-loss}
Let $0<\alpha<1$.
Suppose $A_i,B_j>0$ and $x_i,y_j>0$.
If
\[
 \sum_i A_i\min(T,x_i)\ge \sum_j B_j\min(T,y_j)\qquad\text{for every }T\ge0,
\]
then
\[
 \sum_i A_i x_i^\alpha\ge \sum_j B_j y_j^\alpha.
\]
\end{lemma}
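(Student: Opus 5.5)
Represent the concave power $z^\alpha$ as a mixture of the ``stop-loss'' functions $z\mapsto\min(T,z)$, $T>0$, with a nonnegative weight.
Once this is done, the hypothesis, integrated against that weight, gives the conclusion directly.
Concretely, for $0<\alpha<1$ and $z\ge0$ we have
\[
 z^\alpha=\alpha(1-\alpha)\int_0^\infty \min(T,z)\,T^{\alpha-2}\,dT .
\]
To verify this, split the integral at $T=z$.
The part $\int_0^z T\cdot T^{\alpha-2}\,dT$ equals $z^\alpha/\alpha$.
The part $z\int_z^\infty T^{\alpha-2}\,dT$ equals $z\cdot z^{\alpha-1}/(1-\alpha)=z^\alpha/(1-\alpha)$.
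Their sum is $z^\alpha\cdot\frac{1}{\alpha(1-\alpha)}$, as required.
Both pieces converge precisely because $0<\alpha<1$: near $0$ the integrand is $T^{\alpha-1}$, and at infinity it is $zT^{\alpha-2}$.

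With the representation in hand, the proof is short.
I assume, as is implicit in the statement, that the index sets are finite.
Multiply the hypothesis
\[
 \sum_i A_i\min(T,x_i)-\sum_j B_j\min(T,y_j)\ge0
\]
by the nonnegative weight $\alpha(1-\alpha)T^{\alpha-2}$ and integrate over $T\in(0,\infty)$.
Each individual integral $\int_0^\infty \min(T,z)T^{\alpha-2}\,dT$ is finite, so linearity applies and we may interchange the finite sums with the integral.
The left side then becomes $\sum_i A_ix_i^\alpha-\sum_jB_jy_j^\alpha$.
It is nonnegative as the integral of a nonnegative function, which proves the claim.

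No real obstacle is expected; the only point needing care is the convergence of each integral, which I will check separately for the two endpoints.
An alternative route, if one prefers to avoid integrals, is discrete.
Sort all the points $x_i,y_j$ and write the concave function $z^\alpha$ on $[0,\max]$ as a positive combination of $\min(T_k,z)$ over the breakpoints, plus a linear term.
The linear term is itself $\min(T,z)$ for $T$ large, so it is also covered by the hypothesis.
The piecewise-linear interpolant agrees with $z^\alpha$ at every data point, which would again give the result.
I would present the integral version since it is cleaner.
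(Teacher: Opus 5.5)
Your proposal is correct and follows essentially the same route as the paper: the paper also writes $x^\alpha=\alpha(1-\alpha)\int_0^\infty \min(T,x)\,T^{\alpha-2}\,dT$, verifies it by splitting at $T=x$, and integrates the stop-loss hypothesis against this positive kernel. Your added remarks on finiteness of the sums, convergence at both endpoints, and the alternative discrete piecewise-linear route are fine but not needed.
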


\begin{proof}
For $x>0$ and $0<\alpha<1$,
\[
 x^\alpha=\alpha(1-\alpha)\int_0^\infty \min(T,x)T^{\alpha-2}\,dT.
\]
Indeed, splitting the integral at $T=x$ gives
\[
 \int_0^x T^{\alpha-1}\,dT+x\int_x^\infty T^{\alpha-2}\,dT
 =\frac{x^\alpha}{\alpha}+\frac{x^\alpha}{1-\alpha}.
\]
Multiplying by $\alpha(1-\alpha)$ gives the identity.
Integrating the assumed stop-loss inequality against the positive kernel $\alpha(1-\alpha)T^{\alpha-2}$ proves the claim.
\end{proof}
We shall use Descartes' rule of signs in the following standard form: a real polynomial whose nonzero coefficient sequence has at most one sign change has at most one positive zero, counted with multiplicity.

\begin{corollary}\label{coro:Descartes-rule}
Let
\[
f(x)=a_0+a_1x+\cdots+a_d x^d
\]
be a real polynomial.
Suppose that, after deleting zero coefficients, the sequence
\[
a_0,a_1,\ldots,a_d
\]
has at most one sign change.
If $f(x)>0$ for all sufficiently small positive $x$, and $f(c)>0$ for some $c>0$, then
\[
f(x)>0\qquad\text{for every }0<x\le c.
\]
\end{corollary}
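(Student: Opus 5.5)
The plan is to argue by contradiction, combining Descartes' rule of signs (in the form just stated) with a parity count of the zeros of $f$ in the interval $(0,c)$.

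First, $f$ is not the zero polynomial, since $f(c)>0$. By hypothesis the coefficient sequence has at most one sign change, so Descartes' rule says $f$ has at most one positive zero, counted with multiplicity. Also $f(c)>0$, so $c$ is not a zero. Hence it suffices to show that $f$ cannot vanish anywhere in $(0,c)$.

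Suppose, for contradiction, that $f(x_0)=0$ for some $x_0\in(0,c)$. Then the positive zeros of $f$ in $(0,c)$ form a finite nonempty set $x_1<\cdots<x_q$ with multiplicities $m_1,\ldots,m_q$. Choose $\varepsilon>0$ smaller than $x_1$ and small enough that $f(\varepsilon)>0$; this is possible because $f>0$ for all sufficiently small positive $x$. Now factor
\[
f(x)=g(x)\prod_{i=1}^{q}(x-x_i)^{m_i},
\]
where $g$ is a polynomial with no zeros on $[\varepsilon,c]$. By the intermediate value theorem, $g$ has constant sign on $[\varepsilon,c]$.

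Compare the signs of $f$ at $\varepsilon$ and at $c$. Each factor $(x-x_i)^{m_i}$ is positive at $c$ and has sign $(-1)^{m_i}$ at $\varepsilon$. Therefore
\[
\operatorname{sign}f(\varepsilon)=(-1)^{\sum_i m_i}\operatorname{sign}f(c).
\]
Both $f(\varepsilon)$ and $f(c)$ are positive, so $\sum_i m_i$ is even. Since at least one zero exists, $\sum_i m_i\ge 2$. This means $f$ has at least two positive zeros counted with multiplicity, contradicting Descartes' rule. Hence $f(x)>0$ for every $0<x\le c$.

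There is no genuine obstacle. The only point needing care is that multiplicity is essential: a double root at which $f$ touches zero without changing sign must still count as two zeros, and the parity argument above handles exactly this case.
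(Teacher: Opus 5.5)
Your proof is correct and follows the same route as the paper. Both arguments use Descartes' rule to get at most one positive zero counted with multiplicity, then a sign/parity argument on $(0,c)$ to show that any zero there would force a second one. Your factorization $f=g\prod_i(x-x_i)^{m_i}$ just makes rigorous the paper's informal split into the odd- and even-multiplicity cases.
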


\begin{proof}
By Descartes' rule of signs, $f$ has at most one positive zero, counted with multiplicity.
Suppose for a contradiction that $f(t)=0$ for some $t\in(0,c]$.
Since $f$ is positive near $0$ and also $f(c)>0$, this zero cannot be the only positive zero with odd multiplicity; otherwise the sign of $f$ would change and would have to change back before reaching $c$, giving another positive zero.
If the zero at $t$ has even multiplicity, then it is already counted with multiplicity at least $2$, again contradicting Descartes' rule.
Hence no such $t$ exists, and $f(x)>0$ for all $0<x\le c$.
\end{proof}

In the gap-two smoothing step, the old local blocks have sizes
\[
|C|=m+2,\qquad |B|=m,
\]
and total $p$-masses
\[
S_C,\qquad S_B.
\]
Thus the old pure cross layer of rank $s$ equals
\[
X_s^{\mathrm{old}}
=
\sum_{j=1}^{s-1}
\binom{m+2}{j}\binom{m}{s-j}
\left(\frac{S_C}{m+2}\right)^{j/p}
\left(\frac{S_B}{m}\right)^{(s-j)/p}.
\]
After smoothing, the new blocks satisfy
\[
|C^\#|=|B^\#|=m+1,
\]
with the same total $p$-masses $S_C$ and $S_B$.
Hence the new pure cross layer is
\[
X_s^{\mathrm{new}}
=
\sum_{j=1}^{s-1}
\binom{m+1}{j}\binom{m+1}{s-j}
\left(\frac{S_C}{m+1}\right)^{j/p}
\left(\frac{S_B}{m+1}\right)^{(s-j)/p}.
\]
\Cref{lem:pure-cross} compares $X_s^{\mathrm{old}}$ and $X_s^{\mathrm{new}}$, proving that
\[
X_s^{\mathrm{new}}\ge X_s^{\mathrm{old}},
\]
with strict inequality whenever the old pure cross layer is nonzero.

\begin{lemma}\label{lem:pure-cross}
Let $s\ge2$.
Under the local hypothesis \eqref{eq:rho-assumption}, the pure cross layer of rank $s$ does not decrease under the gap-two smoothing.
If the old pure cross layer is nonzero, then the inequality is strict.
\end{lemma}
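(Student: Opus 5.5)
Throughout I normalize $S_C=1$ and $S_B=\rho\ge\frac{m+1}{m+2}$, and put $\alpha=1/p\in(0,1)$. The plan is to reduce the statement to the case $\alpha=1$, and more generally to a family of stop-loss inequalities, via \cref{lem:stop-loss}.

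\textbf{Reformulation.} Each summand of $X_s^{\mathrm{old}}$ and $X_s^{\mathrm{new}}$ has the form (multiplicity)$\,\times\,$(point value)$^\alpha$:
\[
x_j=(m+2)^{-j}(\rho/m)^{s-j},\qquad y_j=(m+1)^{-j}(\rho/(m+1))^{s-j},\qquad 1\le j\le s-1.
\]
The multiplicities are $A_j=\binom{m+2}{j}\binom{m}{s-j}$ and $B_j=\binom{m+1}{j}\binom{m+1}{s-j}$. So
\[
X_s^{\mathrm{old}}=\sum_j A_jx_j^\alpha,\qquad X_s^{\mathrm{new}}=\sum_j B_jy_j^\alpha .
\]
Terms with $A_j=0$ are discarded. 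By \cref{lem:stop-loss}, with the roles of old and new reversed, it suffices to prove
\[
\Phi(T):=\sum_j B_j\min(T,y_j)-\sum_j A_j\min(T,x_j)\ge0\qquad\text{for all }T\ge0 .
\]
If strict inequality holds on a set of $T$ of positive measure, integrating against the positive kernel gives the strict conclusion. This happens whenever the old layer is nonzero, i.e.\ $s\le 2m+1$ and some $A_j>0$.

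\textbf{Endpoints.} $\Phi$ is piecewise linear and concave on each gap between breakpoints, so it suffices to check $T\downarrow0$, $T\to\infty$, and the finitely many breakpoints $T\in\{x_j\}\cup\{y_j\}$.
\begin{itemize}[label=--]
\item For small $T$ the slope of $\Phi$ is $\sum B_j-\sum A_j$. This equals
\[
\Bigl[\tbinom{2m+2}{s}-2\tbinom{m+1}{s}\Bigr]-\Bigl[\tbinom{2m+2}{s}-\tbinom{m+2}{s}-\tbinom{m}{s}\Bigr]
=\tbinom{m+2}{s}+\tbinom{m}{s}-2\tbinom{m+1}{s},
\]
which is $\ge0$ by convexity of $M\mapsto\binom Ms$ (Vandermonde). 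It is strict for $2\le s\le m+2$. For $s>m+2$ it is zero, and I then need the next slope instead.
\item For $T$ large, $\Phi(\infty)$ is exactly the $\alpha=1$ version, i.e.\ the $p=1$ cross layer with point masses $1/(m+2),\rho/m$ versus $1/(m+1),\rho/(m+1)$. I will write it as
\[
e_s(\text{all})-(\text{pure }C)-(\text{pure }B)
\]
and treat it as a polynomial in $\rho$. The aim is to show it is positive on $[\frac{m+1}{m+2},\infty)$ by checking sign patterns of coefficients and using \cref{coro:Descartes-rule} after the substitution $\rho=\frac{m+1}{m+2}(1+u)$ or $\rho\mapsto1/\rho$. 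Positivity near the boundary point comes from a direct evaluation there.
\end{itemize}

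\textbf{Intermediate breakpoints (main obstacle).} I will order the points: both $j\mapsto x_j$ and $j\mapsto y_j$ are geometric in $j$, with ratios $\frac{m}{(m+2)\rho}$ and $\frac1\rho$ respectively. So for $T$ between consecutive breakpoints, $\Phi(T)$ equals
\[
T\cdot(\text{count of terms with value}\ge T)+(\text{sum of the smaller values}),
\]
i.e.\ a truncated sum of one-sided layers. My first attempt is to show that the truncated \emph{counts} compare correctly (new dominates old among the large values), using the binomial ratio monotonicity of \cref{lem:coeff-monotone}. Then the truncated \emph{sums} of small values are handled by the same polynomial-in-$\rho$ argument with \cref{coro:Descartes-rule}: one sign change in the coefficients, positivity at $\rho\to\infty$ or near $\rho=1$, and at the endpoint $\rho=\frac{m+1}{m+2}$.

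If the breakpoint-by-breakpoint check becomes unwieldy, the fallback is a majorization-type argument. I would show that the weighted multiset $\{(B_j,y_j)\}$ dominates $\{(A_j,x_j)\}$ in the increasing-concave order. I would do this by pairing the index $j$ old with $j$ and $j+1$ new, via Pascal's rule
\[
\tbinom{m+2}{j}=\tbinom{m+1}{j}+\tbinom{m+1}{j-1},\qquad \tbinom{m+1}{s-j}=\tbinom{m}{s-j}+\tbinom{m}{s-j-1},
\]
and applying a local two-point inequality for each pair.
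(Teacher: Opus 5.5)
\textbf{Verdict: there is a genuine gap.} Your reformulation is sound. Applying \cref{lem:stop-loss} with the new layer in the dominating role, and reducing $\Phi\ge0$ to finitely many breakpoints, is the same tool the paper uses. The gap is the step you yourself flag as the main obstacle, and the mechanism you propose for it is wrong.

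Put $\rho_0=\frac{m+1}{m+2}$. For $m\ge s-1$, the largest of all point values is always the old $x_1$:
\[
\frac{x_1}{y_j}=\frac{(m+1)^s\rho^{j-1}}{(m+2)m^{s-1}}\ge\frac{(m+1)^{s+j-1}}{(m+2)^j m^{s-1}}>1\qquad(1\le j\le s-1,\ \rho\ge\rho_0),
\]
since $(m+2)m<(m+1)^2$. So for $T$ between $\max_j y_j$ and $x_1$, the new side has no values $\ge T$, while the old side has $A_1>0$ of them. The new side does \emph{not} dominate among the large values. Nonnegativity of $\Phi$ at the upper breakpoints is therefore a genuine cancellation between this count deficit and the surplus in the truncated small values, and it has to be computed.

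At a general $\rho$ you have nothing fixed to compute with. The merged order of the two geometric sequences (ratios $m/((m+2)\rho)$ and $1/\rho$, the second reversing direction at $\rho=1$) changes with $\rho$. Already for $s=3$, $m=2$ there are three different orders on $[\rho_0,\infty)$. The Pascal-pairing fallback, with an unspecified two-point inequality, does not supply this computation. Asking for $\Phi\ge0$ at every $\rho$ also demands more than the lemma needs: it gives domination for all increasing concave functions, at all $\rho$ at once. Finally, the regime $s>m+2$, where your initial slope vanishes, and strictness there, are left open.

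The missing idea is to run the stop-loss argument at one value of $\rho$ only.
\begin{itemize}
\item Write $\Delta=X_s^{\mathrm{new}}-X_s^{\mathrm{old}}$ as a polynomial in $X=\rho^\alpha$. The paper shows that the new-to-old coefficient ratios $R_u$ are strictly decreasing in $u$ with $R_1>1$. So the coefficient sequence has one sign change and a positive leading coefficient, and by Descartes' rule positivity at $X_0=\rho_0^\alpha$ propagates to all $X\ge X_0$.
\item At $\rho=\rho_0$ the configuration is rigid. The new $B$-value $\rho_0/(m+1)$ equals the old $C$-value $1/(m+2)$. The old values form a single geometric sequence with ratio $m/(m+1)$. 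At the breakpoint $T=x_j$ the new values are cut off exactly at index $s-j$, and $T=\infty$ reduces to $T=x_1$.
\item Only old breakpoints matter, because $\Phi$ is concave between consecutive $x_j$. This yields a closed formula for $\Phi(x_j)/x_j$, which Abel summation plus Bernoulli-type estimates bound below by zero.
\end{itemize}

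Your $T\to\infty$ check, incidentally, is precisely \cref{lem:pone-mixed} with $a=m+2$, $b=m$, $r=s$. The case $m\le s-2$ needs no stop-loss at all: use \cref{cor:full-layers} for the full layer and the defect monotonicity from \cref{lem:one-sided}, with strictness from strict Schur-concavity.
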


\begin{proof}
By homogeneity set $S_C=1$, write $\rho=S_B/S_C$, and put $\alpha=1/p\in(0,1)$.


If $m\le s-2$, then $\binom m s=\binom{m+1}{s}=0$.
Thus the small-block monochromatic defect is zero both before and after smoothing.
The pure cross layer equals the full layer minus the large-block monochromatic defect.
The full layer does not decrease by \cref{cor:full-layers}, and the large-block defect decreases by the monotonicity argument in \cref{lem:one-sided}.
Hence the non-strict claim holds in this case.

We also record strictness in this subcase.
If $m=s-2$, then the old large-block defect is positive whereas the new large-block defect is zero, and the pure cross layer increases strictly.
If $m<s-2$ and the old pure cross layer is nonzero, then necessarily $1\le s\le2m+2$ and both local $p$-mass vectors are positive.
The old local $p$-mass vector strictly majorizes the new one and is not a permutation of it.
Hence the full $s$-layer strictly increases by \cref{lem:standard-schur-elementary}.
Since both large-block defects are zero when $m<s-2$, the pure cross layer also strictly increases.
Thus the lemma is proved when $m\le s-2$.

It remains to assume $m\ge s-1$.
Write
\[
 c:=m+1.
\]
The old sizes are $(c+1,c-1)$ and the new sizes are $(c,c)$, with $c\ge s$.
We first reduce to the boundary value
\[
 \rho_0:=\frac{c}{c+1}.
\]
Let $X=\rho^\alpha$.
The pure-layer difference is a polynomial
\[
 \Delta(X)=\sum_{u=1}^{s-1} C_u X^{s-u},
\]
where
\[
 C_u=\binom c u\binom c{s-u}c^{-\alpha s}
 -\binom{c+1}{u}\binom{c-1}{s-u}(c+1)^{-\alpha u}(c-1)^{-\alpha(s-u)}.
\]
For indices for which the old coefficient is nonzero, let $R_u$ be the ratio of the new coefficient to the old coefficient.
Then
\[
 R_u=\frac{c+1-u}{c+1}\cdot \frac{c}{c-s+u}\left(\frac{(c+1)^u(c-1)^{s-u}}{c^s}\right)^\alpha.
\]
We first note that $R_1>1$.
If
\[
 W:=\frac{(c+1)(c-1)^{s-1}}{c^s}\ge1,
\]
then
\[
 R_1\ge \frac{c}{c+1}\cdot \frac{c}{c-s+1}>1.
\]
If $W<1$, then $W^\alpha>W$ and hence
\[
 R_1>\frac{c}{c+1}\cdot\frac{c}{c-s+1}\cdot W
 =\frac{(c-1)^{s-1}}{c^{s-2}(c-s+1)}.
\]
The last expression is at least $1$: indeed this is equivalent to
\[
 \left(1-\frac1c\right)^{s-1}\ge1-\frac{s-1}{c},
\]
which follows from Bernoulli's inequality.
Therefore the preceding strict inequality gives $R_1>1$.

Moreover,
\[
 \frac{R_{u+1}}{R_u}
 =\frac{c-u}{c+1-u}\cdot\frac{c-s+u}{c-s+u+1}\left(\frac{c+1}{c-1}\right)^\alpha<1.
\]
Indeed, the first two factors are at most $c/(c+1)$ and $(c-1)/c$, respectively, and therefore the product is at most
\[
 \frac{c-1}{c+1}\left(\frac{c+1}{c-1}\right)^\alpha
 =\left(\frac{c+1}{c-1}\right)^{\alpha-1}<1.
\]
Thus the coefficient sequence $C_1,\ldots,C_{s-1}$ has at most one sign change, with direction from positive to negative.
The coefficient sequence of $\Delta(X)$, read in ascending powers of $X$, is the reverse of $C_1,\ldots,C_{s-1}$.
Hence it also has at most one sign change.
Hence, by Descartes' rule of signs, $\Delta$ has at most one positive zero.

It remains to prove the boundary inequality.
At $\rho=\rho_0$, set
\[
 A_u=\binom c u\binom c{s-u},
 \qquad
 B_u=\binom{c+1}{u}\binom{c-1}{s-u},
\]
\[
 x_u=c^{-u}(c+1)^{-(s-u)},
 \qquad
 y_u=(c+1)^{-u}\left(\frac{c}{(c+1)(c-1)}\right)^{s-u}.
\]
By \cref{lem:stop-loss}, it suffices to prove
\[
 H(T):=\sum_{u=1}^{s-1}A_u\min(T,x_u)-\sum_{u=1}^{s-1}B_u\min(T,y_u)\ge0
 \qquad (T\ge0).
\]
Since $y_{u+1}/y_u=(c-1)/c<1$, the $y_u$ are decreasing.
On every interval whose endpoints are consecutive $y$-values, the second sum is affine in $T$, while the first sum is concave in $T$.
Therefore $H$ is concave on each such interval, also $H(0)=0$, and it is enough to check $T=y_j$ for $1\le j\le s-1$, together with $T=\infty$.

We next show that the check at $T=y_1$ is the same as the check at infinity.
For $1\le u\le s-1$,
\[
 \frac{x_u}{y_1}=\left(\frac{c+1}{c}\right)^u\left(\frac{c-1}{c}\right)^{s-1}.
\]
The right-hand side is increasing in $u$, and so
\[
 \frac{x_u}{y_1}\le \left(\frac{c+1}{c}\right)^{s-1}\left(\frac{c-1}{c}\right)^{s-1}
 =\left(1-\frac1{c^2}\right)^{s-1}<1.
\]
Thus $x_u<y_1$ for every $u$.
Hence
\[
 \min(y_1,x_u)=x_u,
 \qquad
 \min(y_1,y_u)=y_u,
\]
for all $u$, and consequently
\[
 H(y_1)=\sum_{u=1}^{s-1}A_ux_u-\sum_{u=1}^{s-1}B_uy_u=H(\infty).
\]
It is therefore enough to check $T=y_j$ for $1\le j\le s-1$.

Put
\[
 Y_j:=\frac{H(y_j)}{y_j},
 \qquad
 q:=\frac{c-1}{c},
 \qquad
 \beta:=1-\frac1{c^2}.
\]
A direct calculation gives
\[
 \Theta_{u,j}:=\frac{x_u}{y_j}=\left(\frac{c+1}{c}\right)^u\left(\frac{c-1}{c}\right)^{s-j}.
\]
For fixed $j$, the quantity $\Theta_{u,j}$ is strictly increasing in $u$.
Moreover,
\[
 \Theta_{s-j,j}=\left(1-\frac1{c^2}\right)^{s-j}<1.
\]
If $j\ge2$, then $s-j\le s-2\le c-2$, and Bernoulli's inequality gives
\[
 \Theta_{s-j+1,j}=\frac{c+1}{c}\left(1-\frac1{c^2}\right)^{s-j}
 \ge \frac{c+1}{c}\left(1-\frac{s-j}{c^2}\right)
 \ge \frac{c+1}{c}\left(1-\frac{c-2}{c^2}\right)>1.
\]
If $j=1$, then $s-j+1=s$, which is outside the range $1\le u\le s-1$; in this case only the inequality $\Theta_{u,1}<1$ for $u\le s-1$ is needed.
Hence the truncation point is $s-j$ for every $j$.

We now derive the formula for $Y_j$ explicitly.
At $T=y_j$, the $B$-side contribution is
\[
 \sum_{u=1}^jB_u+\sum_{u=j+1}^{s-1}B_uq^{u-j},
\]
because $y_u\ge y_j$ for $u\le j$ and $y_u/y_j=q^{u-j}$ for $u>j$.
On the $A$-side, the truncation gives
\[
 \sum_{u=1}^{s-j}A_u\Theta_{u,j}+\sum_{u=s-j+1}^{s-1}A_u.
\]
In the first sum set $v=s-u$.
Since $A_u=A_{s-u}$ and
\[
 \Theta_{u,j}=q^{s-u-j}\beta^u,
\]
we obtain
\[
 \sum_{u=1}^{s-j}A_u\Theta_{u,j}=\sum_{v=j}^{s-1}A_vq^{v-j}\beta^{s-v}.
\]
Similarly,
\[
 \sum_{u=s-j+1}^{s-1}A_u=\sum_{v=1}^{j-1}A_v.
\]
Combining these two re-indexed sums with the $B$-side contribution yields
\begin{equation}\label{eq:Yj-formula}
 Y_j=\sum_{u=1}^{j-1}(A_u-B_u)+\sum_{u=j}^{s-1}q^{u-j}\left(A_u\beta^{s-u}-B_u\right).
\end{equation}
Let $d_u=A_u-B_u$ and $D_i=\sum_{u=1}^i d_u$.
In the present gap-two situation, by a telescoping/Vandermonde identity,
\[
 D_i=\binom c i\binom{c-1}{s-i-1}-\binom{c-1}{s-1}=\frac{s-i}{c}A_i-K,
 \qquad
 K:=\binom{c-1}{s-1}.
\]
Write $Y_j=B_j^{*}-L_j$, where Abel summation gives
\[
 B_j^{*}=D_{s-1}q^M+(1-q)\sum_{i=j}^{s-2}D_iq^{i-j},
 \qquad
 M:=s-1-j,
\]
and
\[
 L_j=\sum_{u=j}^{s-1}q^{u-j}A_u(1-\beta^{s-u}).
\]
Since $1-\beta^{s-u}\le (s-u)/c^2$,
\[
 L_j\le \frac1{c^2}\sum_{u=j}^{s-1}(s-u)A_uq^{u-j}.
\]
Substituting the displayed formula for $D_i$ into $B_j^{*}$, all terms involving $(s-u)A_u$ cancel except the final one.
Therefore
\[
 B_j^{*}-\frac1{c^2}\sum_{u=j}^{s-1}(s-u)A_uq^{u-j}
 =q^M\left(D_{s-1}-\frac{A_{s-1}}{c^2}\right)-\frac Kc\sum_{i=j}^{s-2}q^{i-j}.
\]
Now
\[
 D_{s-1}=\binom{c-1}{s-2},
 \qquad
 \frac{A_{s-1}}{c^2}=\frac1{s-1}\binom{c-1}{s-2},
\]
\[
 K=\binom{c-1}{s-1}=\frac{c-s+1}{s-1}\binom{c-1}{s-2},
\]
and
\[
 \sum_{i=j}^{s-2}q^{i-j}=c(1-q^M).
\]
Consequently,
\[
 Y_j\ge \frac{\binom{c-1}{s-2}}{s-1}\left((s-2)q^M-(c-s+1)(1-q^M)\right).
\]
The bracket is nonnegative precisely when
\[
 (c-1)q^M\ge c-s+1.
\]
But $0\le M\le s-2$ and $q=1-1/c$, so Bernoulli's inequality gives
\[
 q^M\ge1-\frac Mc\ge\frac{c-s+2}{c}\ge\frac{c-s+1}{c-1}.
\]
Thus $Y_j\ge0$ for all $j$, and hence $H(T)\ge0$ for all $T\ge0$.

If $s=2$, the pure cross-layer before smoothing is
\[
 m(m+2)\left(\frac{S_C}{m+2}\right)^{1/p}\left(\frac{S_B}{m}\right)^{1/p},
\]
whereas after smoothing it is
\[
 (m+1)^2\left(\frac{S_C}{m+1}\right)^{1/p}\left(\frac{S_B}{m+1}\right)^{1/p}.
\]
Since $(m+1)^2>m(m+2)$ and $1-1/p>0$, the latter is strictly larger.
For $s\ge3$, the lower bound above is strict at $j=s-1$.
Thus the boundary inequality gives
\[
 \Delta(\rho_0^\alpha)>0
\]
whenever the old pure cross-layer is nonzero.

We now explain why this strictness propagates from the boundary point to every admissible value of $\rho$.
Put $X_0=\rho_0^\alpha$.
We have already shown that the coefficient sequence of $\Delta$ has at most one sign change and that its leading nonzero coefficient is positive.
Hence $\Delta(X)>0$ for all sufficiently large $X>0$, and Descartes' rule implies that $\Delta$ has at most one positive zero, counted with multiplicity.
Since $\Delta(X_0)>0$, there can be no zero on $[X_0,\infty)$: a simple zero would force the polynomial to change sign and then require a second positive zero before becoming positive again for large $X$, while a zero without sign change would have even multiplicity and hence would already count as at least two positive zeros.
Therefore
\[
 \Delta(X)>0\qquad\text{for every }X\ge X_0.
\]
Since $X=\rho^\alpha$ and $\rho\ge\rho_0$ imply $X\ge X_0$, the pure cross-layer inequality is strict for every admissible $\rho$ whenever the old pure cross-layer is nonzero.
In particular, the desired non-strict inequality also holds for every $\rho\ge\rho_0$.
\end{proof}


\subsection{Proof of Theorem~\ref{thm:main} for $p > 1$}

We now complete the proof of Theorem~\ref{thm:main} for the case $p > 1$. 

\begin{proposition}\label{prop:embedded-smoothing}
Let $r\ge4$, $k\ge2$, $p>1$, and $n>(r-1)k$.
Let $\mathbf n=(n_1,\ldots,n_k)$ be an extremal tuple for $\Lambda_p$, ordered so that $n_1\ge\cdots\ge n_k$.
Then $n_1-n_k\le1$.
\end{proposition}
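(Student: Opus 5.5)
We argue by contradiction. Suppose $d:=n_1-n_k\ge2$, and let $x$ be a positive eigenvector for $\lambda^{(p)}(Q(\mathbf n))$ with $\sum_v x_v^p=1$. By \cref{prop:structural-hypotheses}, $x$ takes a constant value $a_i$ on $V_i$ and satisfies (S1)--(S2). Let $C=V_1$ and $B=V_k$, and write $S_C=n_1a_1^p$ and $S_B=n_ka_k^p$ for their $p$-masses.

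The first step is to reduce to a configuration of exact gap two, so that the local machinery of this section applies with $m=n_k$. If $d\ge 2$, I would take $m=n_1-2\ge n_k$. The cleanest choice of pair is $C=V_1$ together with a class $B=V_i$ of size exactly $n_1-2$. If no such class exists, we can still work with $V_1,V_k$ and a general gap. I expect the gap-two lemmas to be the intended tool, though. Note that a general gap $d$ can be smoothed one unit at a time only if intermediate sizes appear. So I would instead argue directly: the sizes $(n_1,n_k)$ can be replaced by $(n_1-1,n_k+1)$, and this move is itself a gap-two smoothing when $d=2$.

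For larger $d$, I would redo the three layer lemmas with $(m+d,m)\to(m+d-1,m+1)$. Alternatively, and more simply, I would apply (S3) when $p\ge2$ (which gives $d\le1$ outright), and then handle $1<p<2$ via the generalization. If a uniform argument is needed, I would first try to show that the proofs of \cref{lem:gap-two-majorization,lem:one-sided,lem:pure-cross} only use the hypothesis $\rho\ge (m+1)/(m+2)$. By (S2), this hypothesis in the gap-$d$ setting reads $S_B/S_C\ge (n_1-1)/n_1$, and that is exactly the form supplied by (S2) with $i=k$.

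Assume now that $n_1=m+2$ and $n_k=m$. By (S2), $S_B\ge(n_1-1)a_1^p=\frac{m+1}{m+2}S_C$, which is precisely \eqref{eq:rho-assumption}. I define a new tuple by moving one vertex from $V_1$ to $V_k$. On the new classes of size $m+1$, I put constant values $(S_C/(m+1))^{1/p}$ and $(S_B/(m+1))^{1/p}$, keeping all other coordinates unchanged. The $p$-norm is preserved. Next, expand $P_Q$ by the number $h=r-s$ of vertices taken from outside $C\cup B$. Each such term is $e_h(Z)$ times an $s$-subset weight from $C\cup B$ satisfying the constraints below.

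For $h\ge1$, every $s$-subset is allowed when $s<r$, except that monochromatic edges inside a single outside class are unaffected. So the local factor is the full layer $F_s$, which does not decrease by \cref{cor:full-layers}. For $h=0$ ($s=r$), the allowed sets are those not inside $C$ and not inside $B$. This local factor is the pure cross layer $X_r$, which does not decrease and strictly increases by \cref{lem:pure-cross}, since the old cross layer is nonzero ($m\ge r-1$ by \cref{lem:no-small-class-unified}).

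Hence $P_Q$ strictly increases at a unit vector for the new tuple. This gives $\Lambda_p(\mathbf n')>\Lambda_p(\mathbf n)$, contradicting extremality. The one-sided lemma would be used in the variant where the outside contribution is drawn from within the same class structure. I would check carefully whether it is needed at all, for instance when $k=2$.

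The main obstacle is the reduction from an arbitrary gap $d\ge2$ (possible only when $p<2$) to the gap-two local statement. I would first try to iterate: choose $i$ with $n_i$ maximal subject to $n_1-n_i\ge2$, so that some pair of classes has sizes differing by exactly two. If that fails, I would extend \cref{lem:gap-two-majorization} and \cref{lem:pure-cross} to sizes $(M+1,N-1)$ versus $(M,N)$ under the mass condition coming from (S2).
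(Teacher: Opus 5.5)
Your argument is sound when $n_1-n_k=2$. With $C=V_1$ and $B=V_k$, (S2) gives \eqref{eq:rho-assumption}. Every term with an outside vertex is then a full layer, and the all-local terms form the gap-two pure cross layer. Strictness follows from $n_k\ge r-1$, and here the one-sided lemma is indeed not needed. Together with (S3), this settles $p\ge 3/2$.

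For $1<p<3/2$, however, (S3) does not exclude a gap $d\ge3$, and neither of your fallbacks handles it. Choosing $i$ with $n_i$ maximal subject to $n_1-n_i\ge2$ need not produce a pair of classes at distance exactly two; for instance, $k=2$ with sizes $(m+3,m)$ has no such pair. Extending the lemmas to $(m+d,m)\to(m+d-1,m+1)$ is not a routine check either. The majorization step does go through under (S2). In the pure cross layer, though, the small-block defect increases while the large-block defect decreases. The paper's proof of \cref{lem:pure-cross} resolves this trade-off only for sizes $(c+1,c-1)\to(c,c)$: the reduction to the boundary value $\rho_0=c/(c+1)$, the stop-loss comparison and the telescoping formula for $D_i$ are all tailored to that case. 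So the case $d\ge3$ remains unproved in your proposal.

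The missing idea is that the local block need not be a whole class. Take $C\subset V_1$ with $\abs{C}=n_k+2$, put $E=V_1\setminus C$, and move one vertex $v\in C$ into $V_k$. Use the test vector that keeps the old values on $E$ and on the middle classes, and takes the value $(S_C/(n_k+1))^{1/p}$ on $C\setminus\{v\}$ and $(S_B/(n_k+1))^{1/p}$ on $V_k\cup\{v\}$, where $S_C=(n_k+2)a_1^p$ and $S_B=n_ka_k^p$. This vector is not constant on the new class $E\cup(C\setminus\{v\})$, which is harmless for a test vector.

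Now (S2) gives $S_B/S_C\ge (n_1-1)/(n_k+2)\ge (n_k+1)/(n_k+2)$. So \eqref{eq:rho-assumption} holds with $m=n_k$ whatever $d$ is.

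Expanding over the fixed part $F\subseteq E\cup V_2\cup\cdots\cup V_{k-1}$ produces one new type of term, $\emptyset\ne F\subseteq E$. In such a term a local completion is forbidden exactly when it lies inside $C$ before the move, or inside $C\setminus\{v\}$ after it. This is the one-sided layer of \cref{lem:one-sided}, which is precisely what that lemma is for. All other terms are full layers or the gap-two pure cross layer, exactly as in your $d=2$ argument.
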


\begin{proof}
Assume, toward a contradiction, that
\[
 n_1-n_k\ge2.
\]
Let
\[
 A:=V_1,
 \qquad
 B:=V_k,
 \qquad
 \abs A=a,
 \qquad
 \abs B=b.
\]
Let the old eigenvector take the values
\[
 \alpha:=a_1\quad\text{on }A,
 \qquad
 \beta:=a_k\quad\text{on }B.
\]
By \cref{prop:structural-hypotheses}, $\alpha\le\beta$ and
\begin{equation}\label{eq:embedded-structural-bound}
 b\beta^p\ge (a-1)\alpha^p.
\end{equation}
Choose a subset
\[
 C\subset A,
 \qquad
 \abs C=b+2,
\]
and put
\[
 E:=A\setminus C,
 \qquad
 \abs E=a-b-2.
\]
We smooth only the local pair $(C,B)$.
Choose one vertex $v\in C$ and define
\[
 C^\#:=C\setminus\{v\},
 \qquad
 B^\#:=B\cup\{v\}.
\]
Thus $\abs{C^\#}=\abs{B^\#}=b+1$.
The new color classes are
\[
 V_1^\#:=E\cup C^\#,
 \qquad
 V_k^\#:=B^\#,
\]
and all middle classes are unchanged.
Let $G^\#$ be the complete $k$-chromatic $r$-graph with these new classes.

Define a comparison vector $y$ by keeping the old values on $E$ and on all middle classes, while setting
\[
 y_i=\alpha^\#:=\left(\frac{(b+2)\alpha^p}{b+1}\right)^{1/p}\quad (i\in C^\#),
\]
and
\[
 y_i=\beta^\#:=\left(\frac{b\beta^p}{b+1}\right)^{1/p}\quad (i\in B^\#).
\]
Then
\[
 (b+1)(\alpha^\#)^p+(b+1)(\beta^\#)^p=(b+2)\alpha^p+b\beta^p,
\]
so $\norm y_p=\norm x_p=1$.
Notice that $y$ need not be constant on the new class $V_1^\#=E\cup C^\#$; this is harmless, since $y$ is only a test vector for $G^\#$.

The local masses of $C$ and $B$ are
\[
 S_C=(b+2)\alpha^p,
 \qquad
 S_B=b\beta^p.
\]
The structural bound \eqref{eq:embedded-structural-bound} gives
\[
 \frac{S_B}{S_C}=\frac{b\beta^p}{(b+2)\alpha^p}
 \ge \frac{a-1}{b+2}\ge \frac{b+1}{b+2}.
\]
Thus the local gap-two hypotheses of \cref{sec:smoothing} are satisfied with $m=b$.

Let
\[
 R:=E\cup V_2\cup\cdots\cup V_{k-1}
\]
be the part of the vertex set not moved by the local smoothing.
We compare $P/r!$ by fixing a set
\[
 F\subseteq R,
 \qquad
 \abs F=f,
\]
and completing it with
\[
 s:=r-f
\]
vertices from $C\cup B$ before smoothing, or from $C^\#\cup B^\#$ after smoothing.
The product of the weights on $F$ is the same before and after smoothing, so it suffices to compare the local completion contribution for each fixed $F$.

There are three cases.

\smallskip \noindent\emph{Case 1: $F$ is not contained in $E$, and $F\ne\emptyset$.}
Then the fixed set $F$ already prevents monochromaticity after any nonempty local completion.
Indeed, either $F$ itself meets at least two fixed color classes, in which case the completed $r$-set is already non-monochromatic, or $F$ is contained in a single middle color class.
In the latter case every local vertex chosen from $C\cup B$ before smoothing, or from $C^\#\cup B^\#$ after smoothing, lies in one of the two extreme color classes and hence has a color different from that middle class.
Therefore, if $s>0$, every choice of $s$ local vertices is admissible and the local contribution is the full local $s$-layer, which does not decrease by \cref{cor:full-layers}.
If $s=0$, no local vertex is chosen; the contribution is either zero or the fixed monomial of $F$, and in either case it is independent of the local smoothing.

\smallskip \noindent\emph{Case 2: $\emptyset\ne F\subseteq E$.}
Here $F$ lies entirely in the old largest color class.
A local completion is forbidden exactly when all $s$ local vertices are chosen from the $C$-side.
Hence the local contribution is a one-sided layer, and it does not decrease by \cref{lem:one-sided}.
If $s=0$, both old and new contributions are zero.

\smallskip \noindent\emph{Case 3: $F=\emptyset$.}
Then the whole edge lies in the local pair.
The admissible local $r$-sets are exactly those meeting both local blocks.
Hence the local contribution is the pure cross layer of rank $r$, and \cref{lem:pure-cross} shows that it does not decrease.

Summing over all $F\subseteq R$ gives
\[
 P_{G^\#}(y)\ge P_Q(x).
\]
In fact the inequality is strict.
By \cref{lem:no-small-class-unified}, the smallest class has size $b\ge r-1$.
Hence the old pure rank-$r$ cross layer is nonzero; for example, it contains the contribution obtained by choosing one vertex from $C$ and $r-1$ vertices from $B$.
Therefore Case 3 is strict by \cref{lem:pure-cross}.
Consequently,
\[
 P_{G^\#}(y)>P_Q(x)=\lambda^{(p)}(Q(\mathbf n)).
\]
Since $\norm y_p=1$, the new tuple has larger $p$-spectral radius, contradicting the extremality of $\mathbf n$.
Hence $n_1-n_k\le1$.
\end{proof}

\section{Proof of Theorem~\ref{thm:main} for $p=1$}
\label{sec:pone}

In this section, we present the proof of Theorem~\ref{thm:main} for the case $p = 1$. 

Throughout this section write
\[
 \lambda(G):=\lambda^{(1)}(G)=\max_{x_v\ge0,\,\sum_v x_v=1}P_G(x).
\]
The argument in this section works for every $r\ge3$.

Note that it suffices to prove the following proposition. 

\begin{proposition}\label{prop:pone-balancing}
Let $r\ge3$, $k\ge2$, and $n>(r-1)k$.
If $\mathbf n=(n_1,\ldots,n_k)$ maximizes $\Lambda_1(\mathbf n)=\lambda(Q(\mathbf n))$, then $\mathbf n$ is balanced; that is, $\abs{n_i-n_j}\le1$ for all $i,j$.
\end{proposition}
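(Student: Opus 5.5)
We argue by contradiction, following the outline given in the introduction. Suppose $\mathbf n$ maximizes $\Lambda_1$ but is unbalanced, and fix two classes $V_i,V_j$ with sizes $a=n_i\ge b+2$, $b=n_j$. Let $x\ge0$ be an optimal weighting for $\lambda(Q(\mathbf n))$ with $\sum_v x_v=1$. By \cref{lem:quoted-complete-chromatic-positivity}, $x$ is positive and constant on each class; say it equals $\alpha$ on $V_i$ and $\beta$ on $V_j$. By \cref{lem:no-small-class-unified}, $b\ge r-1$. Write $\mu=a\alpha+b\beta$ for the total $\ell_1$-mass of the two classes. We move one vertex from $V_i$ to $V_j$, giving sizes $(a-1,b+1)$. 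We then replace the weights on $V_i\cup V_j$ by the uniform weight $\mu/(a+b)$ on all $a+b$ vertices, and keep every other coordinate unchanged. The total mass is preserved, so this is a valid test vector for the new tuple $\mathbf n'$. The goal is to show $P_{Q(\mathbf n')}(y)>P_{Q(\mathbf n)}(x)$, which contradicts maximality.

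Group the terms of $P/r!$ according to the fixed set $F$ of vertices outside $V_i\cup V_j$, as in the proof of \cref{prop:embedded-smoothing}, and let $s=r-|F|$ be the number of local vertices.

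\emph{Case 1: $F$ is nonempty and meets some other class.} Then every local completion is admissible. The contribution is $e_s$ of the local weight vector. Uniform weights are majorized by any weight vector with the same sum, and $e_s$ is Schur-concave on $\mathbb R_+^{a+b}$ (this is \cref{lem:standard-schur-elementary} with the exponent taken to be $1$, or simply the classical fact for $e_s$). Hence this contribution does not decrease.

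\emph{Case 2: $F=\emptyset$, so $s=r$.} The contribution is the two-class mixed term
\[
e_r(\text{local weights})-\tbinom{a}{r}\alpha^r-\tbinom{b}{r}\beta^r .
\]
After the move it becomes
\[
\bigl(\tbinom{a+b}{r}-\tbinom{a-1}{r}-\tbinom{b+1}{r}\bigr)\Bigl(\frac{\mu}{a+b}\Bigr)^r .
\]
This is the strict two-class balancing inequality, and it is the main obstacle. Since $\mathbf n$ is optimal, I would first use the optimality of $x$ to control the ratio $\beta/\alpha$. An analogue of \cref{lem:opposite-order} for the Lagrangian (KKT conditions on a positive optimum) gives $\alpha\le\beta$. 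Moving a single vertex with fixed weights, as in \cref{lem:one-vertex-transfer-pgreater}, gives
\[
\tbinom{a-1}{r-1}\alpha^{r-1}\le\tbinom{b}{r-1}\beta^{r-1}.
\]
These two constraints restrict $t=b\beta/\mu$ to a bounded interval. The task then reduces to a one-variable inequality: the old mixed polynomial in $t$ (namely $\sum_{0<\ell<r}\binom a\ell\binom b{r-\ell}\alpha^\ell\beta^{r-\ell}$) is at most the new uniform value. For this I would try to maximize the old mixed term over $t$ directly, for example by a concavity or Descartes-sign argument modelled on \cref{lem:pure-cross}, and then compare the maximum with the uniform value using $\binom{a}{\ell}\binom{b}{r-\ell}$ versus $\binom{a-1}{\ell}\binom{b+1}{r-\ell}$ and a Vandermonde identity. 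If that fails, the fallback is to smooth differently. Instead of full uniformization, keep the classes' own masses, as in the $p>1$ argument: set $(a-1)\alpha'=a\alpha-\delta$ and $(b+1)\beta'=b\beta+\delta$, with $\delta$ chosen so that the one-sided layers also behave.

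\emph{Case 3: $F\ne\emptyset$ and $F$ lies inside $V_i$ or $V_j$.} This cannot happen, since $F$ is chosen outside $V_i\cup V_j$. However, the one-sided completions forbidden because of $V_i$ or $V_j$ only arise within Case 2, so nothing further is needed here.

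Strictness comes from Case 2 alone. Because $b\ge r-1$, the old mixed term is positive, and the balancing inequality is strict when $a\ge b+2$. Summing over all $F$ then gives the contradiction, and hence $|n_i-n_j|\le1$ for all $i,j$.
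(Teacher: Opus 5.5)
\textbf{There is a genuine gap: the Case~2 inequality is not proved, and for your test vector it is false.} Your decomposition is the same as the paper's \eqref{eq:pone-decomp}. External layers are handled by Schur-concavity of $e_s$, and everything rests on the $F=\emptyset$ mixed term. But the step that carries all the weight, the strict inequality in Case~2, is left unproved.

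The failure comes from your choice to spread $\mu$ uniformly over all $a+b$ vertices. This forces the two new classes to carry masses in ratio $(a-1):(b+1)$. For large classes, the mixed term is to leading order $\bigl((X+Y)^r-X^r-Y^r\bigr)/r!$, where $X,Y$ are the class masses. It depends on the masses, not on the sizes, and at an optimum $X\approx Y$. So full uniformization loses a constant factor whenever $a/b$ stays away from $1$, while rebalancing gains only lower-order terms.

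Here is a concrete counterexample. Take $r=3$, $k=2$, $(a,b)=(30,20)$, $\mu=1$, $\alpha=1/60$, $\beta=1/40$, so each class has mass $1/2$. This is essentially the true optimum of $Q(30,20)$, whose class masses are about $0.502$ and $0.498$. Both of your constraints hold: $\alpha\le\beta$ and $\binom{29}{2}\alpha^2=406/3600<190/1600=\binom{20}{2}\beta^2$. Yet the old mixed term and your new value are
\[
30\tbinom{20}{2}\alpha\beta^2+20\tbinom{30}{2}\alpha^2\beta=\tfrac{23}{192}\approx0.1198,
\qquad
\Bigl(\tbinom{50}{3}-\tbinom{29}{3}-\tbinom{21}{3}\Bigr)50^{-3}=\tfrac{14616}{125000}\approx0.1169 .
\]
Since $k=2$, there are no external layers to compensate. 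Your test vector therefore gives less than $\lambda(Q(30,20))$, and no contradiction follows. The claim that Case~2 is strict whenever $a\ge b+2$ is false as stated. Maximizing the old mixed term over $t$ within your constraints cannot rescue it.

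\textbf{The missing idea is to keep each class's own mass.} This is your fallback with $\delta=0$: put $\alpha^+=a\alpha/(a-1)$ on the $a-1$ vertices and $\beta^-=b\beta/(b+1)$ on the $b+1$ vertices. The new local vector is then no longer uniform, so the external layers need a different argument. One shows that every new entry lies in $[\alpha,\beta]$, and then uses majorization (\cref{lem:pone-robin}).
\begin{itemize}
\item The bound $\alpha<\beta^-$ follows from your one-vertex-move inequality plus Bernoulli. Indeed $\binom{b}{r-1}/\binom{b+1}{r-1}=(b-r+2)/(b+1)<\bigl(b/(b+1)\bigr)^{r-1}$, which gives $\alpha/\beta<b/(b+1)$.
\item The bound $\alpha^+<\beta$ follows from $a\ge b+2$.
\end{itemize}
The mixed term (\cref{lem:pone-mixed}) becomes $\sum_{s=1}^{r-1}D_st^s$ with $t=\alpha/\beta$. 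The ratios $R_s$ of new to old coefficients strictly decrease in $s$, with $R_1>1>R_{r-1}$. By Descartes' rule it is enough to check the endpoint $t=b/(b+1)$. There, Schur-concavity of $e_r$ together with strict increase of $L_r(x)=\binom xr-\binom{x-1}{r}\bigl(\frac{x}{x-1}\bigr)^r$ on $[r,\infty)$ gives the strict inequality. The only input needed from optimality is $\alpha/\beta<b/(b+1)$; your opposite-order (KKT) step is unnecessary.
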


The proof of \cref{prop:pone-balancing} is divided into several lemmas. 

We shall also use the following elementary form of Schur concavity.

\begin{lemma}\label{lem:pone-robin}
Let $u=(\beta,\ldots,\beta,\alpha,\ldots,\alpha)$ have $b$ entries equal to $\beta$ and $a$ entries equal to $\alpha$, where $0\le\alpha\le\beta$.
If $v$ is any vector of length $a+b$, all of whose entries lie in $[\alpha,\beta]$, and if $\sum_i v_i=\sum_i u_i$, then
\[
 e_q(v)\ge e_q(u)
\]
for every $q\ge2$.
\end{lemma}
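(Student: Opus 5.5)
Our plan is to show that $u$ majorizes $v$ and then apply the Schur-concavity of elementary symmetric functions, in the same spirit as \cref{lem:standard-schur-elementary} with $\alpha=1$.

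\emph{Step 1: $u\succ v$.} Write $N=a+b$, let $v_{[1]}\ge\cdots\ge v_{[N]}$ be the decreasing rearrangement of $v$, and let $T$ be the common total. For $j\le b$, the sum of the $j$ largest entries of $u$ is $j\beta$, and $\sum_{i\le j}v_{[i]}\le j\beta$ because every entry of $v$ is at most $\beta$. For $j=b+t$ with $0\le t\le a$, the sum of the $j$ largest entries of $u$ is $T-(a-t)\alpha$. The corresponding sum for $v$ is $T$ minus its $a-t$ smallest entries, and those entries total at least $(a-t)\alpha$ since each is at least $\alpha$. So every partial sum of $v$ is bounded by the partial sum of $u$, the totals agree, and $u\succ v$.

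\emph{Step 2: Schur-concavity of $e_q$ on $\mathbb R_{\ge0}^N$.} We run the Schur derivative criterion as in \cref{lem:standard-schur-elementary}, now with exponent $1$. For $q\ge2$, fix $i\ne j$ and let $W$ be the remaining coordinates. Then
\[
\frac{\partial e_q}{\partial z_i}-\frac{\partial e_q}{\partial z_j}=(z_j-z_i)\,e_{q-2}(W),
\]
so
\[
(z_i-z_j)\left(\frac{\partial e_q}{\partial z_i}-\frac{\partial e_q}{\partial z_j}\right)=-(z_i-z_j)^2e_{q-2}(W)\le0.
\]
Hence $e_q$ is Schur-concave on the positive orthant. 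It extends to $\mathbb R_{\ge0}^N$ by the same $\varepsilon$-shift continuity argument used in \cref{lem:standard-schur-elementary}.

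Alternatively, one can avoid calculus. Use the Robin-Hood characterization: $v$ is reached from $u$ by finitely many transfers $(z_i,z_j)\mapsto(z_i-\delta,z_j+\delta)$ with $0\le\delta\le(z_i-z_j)/2$. Under such a transfer $z_i+z_j$ is fixed and $z_iz_j$ does not decrease, so
\[
e_q=e_q(W)+(z_i+z_j)e_{q-1}(W)+z_iz_je_{q-2}(W)
\]
does not decrease, because $e_{q-2}(W)\ge0$.

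\emph{Step 3: conclusion.} Combining Steps 1 and 2 gives $e_q(v)\ge e_q(u)$ for every $q\ge2$. Whenever $q>N$ both sides vanish. The only place needing care is Step 1, and specifically the treatment of the $a-t$ smallest entries of $v$ through the box constraint $[\alpha,\beta]$. Once the majorization is written out this way it is routine, so we expect no genuine obstacle.
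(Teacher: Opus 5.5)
Your proposal is correct and follows the paper's route: the paper's proof is the one-line observation that $u$ majorizes $v$, combined with Schur-concavity of $e_q$ on the nonnegative orthant. Your Step 1 (partial sums controlled by the box constraint $[\alpha,\beta]$) and Step 2 (either the derivative identity or the Robin-Hood transfer formula) supply the details the paper leaves implicit.
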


\begin{proof}
Since $u$ majorizes $v$, the claim follows from the Schur-concavity of $e_q$ on the nonnegative orthant.
\end{proof}

For a $k$-tuple $\mathbf n=(n_1,\ldots,n_k)$ of nonnegative integers summing to $n$, put
\[
 \Lambda_1(\mathbf n)=\lambda(Q(n_1,\ldots,n_k)).
\]
Choose $\mathbf n$ so that $\Lambda_1(\mathbf n)$ is maximal among all such $k$-tuples.

Now suppose that a maximizing tuple is not balanced.
Choose two classes $A,B$ with
\[
 \abs A=a,
 \qquad
 \abs B=b,
 \qquad
 a\ge b+2.
\]
By \cref{lem:no-small-class-unified}, $b\ge r-1$.
Let $x$ be a nonnegative eigenvector to $\lambda(Q(\mathbf n))$.
By \cref{lem:quoted-complete-chromatic-positivity}, $x$ is positive and is constant on each class.
Write
\[
 x_v=\alpha\quad(v\in A),
 \qquad
 x_v=\beta\quad(v\in B).
\]


The following lemma provides a necessary optimality inequality at $p=1$. 

\begin{lemma}\label{lem:pone-necessary}
Let $r\ge3$, $k\ge2$, and $n>(r-1)k$.
Let $\mathbf n=(n_1,\ldots,n_k)$ maximize $\Lambda_1$.
Let $A,B$ be two classes of $Q(\mathbf n)$ with sizes $a,b$, where $a\ge b+2$.
Let a Lagrangian maximizer take the values $\alpha$ on $A$ and $\beta$ on $B$.
Then
\[
        \binom{a-1}{r-1}\alpha^{r-1}
        \le
        \binom{b}{r-1}\beta^{r-1}.
\]
Consequently,
\[
        \frac{\alpha}{\beta}<\frac{b}{b+1}.
\]
\end{lemma}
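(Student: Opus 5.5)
The plan is to copy the one-vertex-transfer argument of \cref{lem:one-vertex-transfer-pgreater}, which did not use $p>1$. Let $x$ be the Lagrangian maximizer; by \cref{lem:quoted-complete-chromatic-positivity} it is positive and constant on classes. Pick a vertex $v\in A$ and move it to $B$, keeping the same vector on the same labelled vertex set. The $\ell_1$-norm is unchanged, so $x$ is a feasible test vector for the new complete $k$-chromatic graph $Q'$. Since $\mathbf n$ maximizes $\Lambda_1$, we get $P_{Q'}(x)\le\lambda(Q')\le\lambda(Q(\mathbf n))=P_{Q(\mathbf n)}(x)$.

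Next I compute the change in edges. Exactly as before, the new graph gains the sets $\{v\}\cup S$ with $S\in(A\setminus\{v\})^{(r-1)}$ and loses the sets $\{v\}\cup T$ with $T\in B^{(r-1)}$; every other $r$-set keeps its status. Comparing polyforms then gives
\[
 \binom{a-1}{r-1}\alpha^{r}\le\binom{b}{r-1}\alpha\beta^{r-1},
\]
and dividing by $\alpha>0$ yields the first assertion.

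For the consequence, put $s=r-1$, $N=a-1$ and $M=b$. \Cref{lem:no-small-class-unified} gives $M\ge s$, and $a\ge b+2$ gives $N\ge M+1>M$. Since $(N-q)/(M-q)\ge N/M$ for $0\le q\le s-1$, we have
\[
\frac{\binom Ns}{\binom Ms}\ge\left(\frac NM\right)^s,
\]
so $(\beta/\alpha)^s\ge (N/M)^s$ and hence $\alpha/\beta\le M/N=b/(a-1)\le b/(b+1)$.

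The strict inequality is the only delicate point. I would get it from the $q=s-1$ factor: this factor equals $(N-s+1)/(M-s+1)$, which is strictly larger than $N/M$ because $N>M$ and $s\ge2$ (recall $r\ge3$). Then $\binom Ns/\binom Ms>(N/M)^s$, so $\alpha/\beta<b/(a-1)\le b/(b+1)$, as required.
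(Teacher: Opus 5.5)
Your proposal is correct and follows the paper's proof: the same one-vertex transfer from $A$ to $B$ with the unchanged test vector gives the first inequality. For the strict consequence the paper first replaces $a-1$ by $b+1$ and applies strict Bernoulli to $\binom{b}{r-1}/\binom{b+1}{r-1}=(b-r+2)/(b+1)$, whereas you reuse the factorwise ratio bound from \cref{lem:one-vertex-transfer-pgreater} with the strict $q=s-1$ factor; this is equally valid and gives the slightly sharper intermediate bound $\alpha/\beta<b/(a-1)$.
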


\begin{proof}
Move one vertex $v$ from $A$ to $B$ and keep the old vector $x$.
By maximality, the value cannot increase.
As in the proof of \cref{lem:no-small-class-unified}, the only changed edges are those of the form $\{v\}\cup S$ with $S\in(A\setminus\{v\})^{(r-1)}$, which are gained, and those of the form $\{v\}\cup T$ with $T\in B^{(r-1)}$, which are lost.
Therefore
\[
 \binom{a-1}{r-1}\alpha^r\le \binom b{r-1}\alpha\beta^{r-1}.
\]
Since $\alpha>0$, this gives the first assertion.

Let $t=\alpha/\beta$.
Since $a-1\ge b+1$,
\[
 t^{r-1}\le \frac{\binom b{r-1}}{\binom{a-1}{r-1}}
 \le \frac{\binom b{r-1}}{\binom{b+1}{r-1}}
 =\frac{b-r+2}{b+1}.
\]
For $r\ge3$, Bernoulli's inequality gives
\[
 \frac{b-r+2}{b+1}=1-\frac{r-1}{b+1}<\left(1-\frac1{b+1}\right)^{r-1}=\left(\frac b{b+1}\right)^{r-1}.
\]
Thus $t<b/(b+1)$.
\end{proof}

We now define the balancing move with a new test vector.
Replace the pair of class sizes $(a,b)$ by $(a-1,b+1)$, and redistribute the total $\ell_1$-mass of each of the two classes uniformly over its new class.
Thus the new vertex weights are
\[
 \alpha^+=\frac{a\alpha}{a-1},
 \qquad
 \beta^- =\frac{b\beta}{b+1},
\]
on the new $A$-class and new $B$-class respectively, while all other class weights remain unchanged.
The $\ell_1$-constraint is preserved because
\[
 (a-1)\alpha^+ +(b+1)\beta^-=a\alpha+b\beta.
\]
Let
\[
 U=\{\alpha,\ldots,\alpha\}_{a\text{ times}}\cup \{\beta,\ldots,\beta\}_{b\text{ times}},
\]
and
\[
 U^+=\{\alpha^+,\ldots,\alpha^+\}_{a-1\text{ times}}\cup \{\beta^-,\ldots,\beta^-\}_{b+1\text{ times}}.
\]
Let $R$ be the multiset of all weights outside $A\cup B$, and put $E_j=e_j(R)$, with the conventions $E_0=1$ and $E_j=0$ if $j>\abs R$.

The change in the polynomial form, divided by $r!$, is
\begin{equation}\label{eq:pone-decomp}
 \frac{P_{\rm new}-P_{\rm old}}{r!}
 =\sum_{q=2}^{r-1}E_{r-q}\bigl(e_q(U^+)-e_q(U)\bigr)+\bigl(m_r(U^+)-m_r(U)\bigr),
\end{equation}
where
\[
 m_r(U)=\sum_{s=1}^{r-1}\binom as\binom b{r-s}\alpha^s\beta^{r-s}
\]
is the contribution of the $r$-sets using vertices from $A\cup B$ and meeting both classes.

Indeed, for $1\le q\le r-1$, every $r$-set using $q$ vertices from $A\cup B$ and $r-q$ vertices from outside $A\cup B$ is an edge both before and after the move, and these terms contribute $E_{r-q}(e_q(U^+)-e_q(U))$.
The $q=1$ term cancels because $e_1(U^+)=e_1(U)$, while the $q=0$ contribution is unchanged.
For $q=r$, one must exclude the monochromatic $r$-sets inside $A$ or inside $B$, leaving precisely $m_r$.

The nonnegativity of the first sum in \eqref{eq:pone-decomp} follows from \cref{lem:pone-robin}.
Indeed, \cref{lem:pone-necessary} gives
\[
 \alpha<\beta^- =\frac{b\beta}{b+1}.
\]
Also, since $a\ge b+2$,
\[
 \frac b{b+1}<\frac{a-1}{a},
\]
and hence
\[
 \alpha^+=\frac{a\alpha}{a-1}<\beta.
\]
Thus every entry of $U^+$ lies in $[\alpha,\beta]$, and $U^+$ has the same total sum as $U$.
\Cref{lem:pone-robin} yields
\begin{equation}\label{eq:pone-external-nonnegative}
 e_q(U^+)\ge e_q(U),
 \qquad
 2\le q\le r-1.
\end{equation}
It remains to prove that the mixed two-class term strictly increases.

\begin{lemma}\label{lem:pone-mixed}
Let $r\ge3$, $a\ge b+2$, $b\ge r-1$, and $\alpha,\beta>0$.
Suppose
\[
 \frac\alpha\beta\le \frac b{b+1}.
\]
Set
\[
 \alpha^+=\frac{a\alpha}{a-1},
 \qquad
 \beta^- =\frac{b\beta}{b+1}.
\]
Then
\[
 \sum_{s=1}^{r-1}\binom{a-1}{s}\binom{b+1}{r-s}(\alpha^+)^s(\beta^-)^{r-s}
 >
 \sum_{s=1}^{r-1}\binom as\binom b{r-s}\alpha^s\beta^{r-s}.
\]
\end{lemma}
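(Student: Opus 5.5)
The inequality is homogeneous of degree $r$ in $(\alpha,\beta)$, so I would normalize $\beta=1$ and write $t=\alpha/\beta\in(0,b/(b+1)]$. The difference ``new minus old'' is then a polynomial
\[
 \Delta(t)=\sum_{s=1}^{r-1}c_s t^s,\qquad
 c_s=\binom{a-1}{s}\binom{b+1}{r-s}\Bigl(\tfrac{a}{a-1}\Bigr)^s\Bigl(\tfrac{b}{b+1}\Bigr)^{r-s}-\binom as\binom b{r-s},
\]
and all old coefficients are positive because $b\ge r-1$. The plan mirrors the proof of \cref{lem:pure-cross}. First control the sign pattern of $(c_s)$. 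Then apply \cref{coro:Descartes-rule} on the interval $(0,b/(b+1)]$. This reduces everything to positivity near $0$ and at the single endpoint $t_0=b/(b+1)$.

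For the sign pattern I would study the ratio of new to old coefficient,
\[
 R_s=\frac{a-s}{a}\Bigl(\frac a{a-1}\Bigr)^s\cdot\frac{b+1}{b+1-r+s}\Bigl(\frac b{b+1}\Bigr)^{r-s}.
\]
A direct computation gives
\[
 \frac{R_{s+1}}{R_s}=\frac{(a-s-1)a}{(a-s)(a-1)}\cdot\frac{(b+1-r+s)(b+1)}{(b+2-r+s)b}.
\]
The first factor is at most $1$. The second factor is less than $1$ exactly when $b+2-r+s<b+1$, that is, $s<r-1$, which holds throughout the range $1\le s\le r-2$. Hence $R_s$ is strictly decreasing, so the sequence $(c_s)$ goes from positive to negative with at most one sign change.

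Positivity near $0$ is governed by the lowest coefficient $c_1$. For it, $R_1=\frac{b+1}{b+2-r}\bigl(\frac b{b+1}\bigr)^{r-1}$. This exceeds $1$ by the Bernoulli-type inequality $\bigl(1-\frac1{b+1}\bigr)^{r-1}>1-\frac{r-1}{b+1}$, used exactly as in \cref{lem:pone-necessary}. So $c_1>0$ and $\Delta>0$ for small $t>0$. By \cref{coro:Descartes-rule} it then suffices to prove $\Delta(t_0)>0$.

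The endpoint is the main obstacle. At $t=t_0$ we have $\alpha=\beta^-$, so every entry of $U^+$ lies in $[\alpha,\beta]$. Therefore \cref{lem:pone-robin} with $q=r$ gives $e_r(U^+)\ge e_r(U)$, and the mixed term equals $e_r$ minus the two monochromatic defects. The difficulty is that the defects trade off. The $A$-defect $\binom{a}{r}\alpha^r$ decreases when passing to $\binom{a-1}{r}(\alpha^+)^r$, by the monotonicity of $M\mapsto\binom Ms M^{-s}$ proved in \cref{lem:one-sided} (the case $p=1$ of that computation, where now the $\ell_1$-mass is preserved). The $B$-defect, however, may increase, since Bernoulli runs the wrong way for $\binom{b+1}{r}(b/(b+1))^r$ against $\binom br$.

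I would first try to show that the $A$-side gain dominates. Write both defect changes explicitly at $t_0$, where $\alpha^+=\frac{ab}{(a-1)(b+1)}$. Reduce to the worst case $a=b+2$ by checking that the $A$-defect drop is monotone in $a$. Then compare the two changes directly, together with the strict gain in $e_r$ coming from the strict majorization $U\succ U^+$ in \cref{lem:standard-schur-elementary}-style Schur concavity. If this bookkeeping fails, the fallback is to mimic the stop-loss argument of \cref{lem:pure-cross}, checking a concave piecewise function at finitely many breakpoints.
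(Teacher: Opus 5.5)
Up to the endpoint your argument coincides with the paper's proof: the normalization $\beta=1$, the ratios $R_s$ and their strict decrease, $R_1>1$ by Bernoulli, the reduction via \cref{coro:Descartes-rule} to $t_0=b/(b+1)$, and $e_r(U^+)\ge e_r(U)$ from \cref{lem:pone-robin}. For that last step you also need $\alpha^+\le\beta$, which holds because $a\ge b+1$.

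The gap is the endpoint itself, which you correctly flag as the main obstacle but do not resolve. The monotonicity of $M\mapsto\binom Mr M^{-r}$ from \cref{lem:one-sided} only gives the \emph{signs} of the two defect changes: the $A$-defect goes down and the $B$-defect goes up. It says nothing about which change is larger, and that comparison is exactly what is needed. Your reduction to the worst case $a=b+2$ assumes the $A$-drop is monotone in $a$, which you do not prove. What is left at $a=b+2$ is again an unproved comparison of the same kind. The appeal to a strict gain in $e_r$ and the stop-loss fallback are not carried out.

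The missing idea is that at $t_0=c$ both changes are values of a single function. Put $L_r(x)=\binom xr-\binom{x-1}r\bigl(\frac x{x-1}\bigr)^r$.
\begin{itemize}
\item The $A$-defect drop is $\binom ar c^r-\binom{a-1}r\bigl(\frac{ac}{a-1}\bigr)^r=c^rL_r(a)$.
\item Because $1=c^r\bigl(\frac{b+1}b\bigr)^r$, the $B$-defect rise is $\binom{b+1}rc^r-\binom br=c^rL_r(b+1)$.
\end{itemize}
So the monochromatic part decreases by exactly $c^r\bigl(L_r(a)-L_r(b+1)\bigr)$. Since $a>b+1\ge r$, it suffices that $L_r$ is strictly increasing on $[r,\infty)$.

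This needs a genuine argument, not just the first-order fact you cite. With $\psi(x)=\binom xr x^{-r}$ one has $L_r(x)=x^r\bigl(\psi(x)-\psi(x-1)\bigr)$, so the claim concerns the increments of $\psi$ weighted by $x^r$, not merely their sign. The paper factors $L_r=A_rB_r$:
\begin{itemize}
\item $A_r(x)=\frac{x(x-2)\cdots(x-r+1)}{r!(x-1)^{r-1}}$ is increasing by a logarithmic-derivative check;
\item $B_r(x)=(x-1)^r-(x-r)x^{r-1}$ is positive and increasing because $B_r(r+y)$ has nonnegative coefficients in $y$, with positive constant and linear terms (Bernoulli again).
\end{itemize}
With this, the non-strict inequality $e_r(U^+)\ge e_r(U)$ already yields $\Delta(c)>0$, and no quantitative strict gain in $e_r$ is required.
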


\begin{proof}
By homogeneity, set $\beta=1$ and write
\[
 t=\alpha,
 \qquad
 c=\frac b{b+1}.
\]
It is enough to prove the assertion for $0<t\le c$.
Let $\Delta(t)$ be the left side minus the right side.
Then
\[
 \Delta(t)=\sum_{s=1}^{r-1}D_s t^s,
\]
where
\[
 D_s=\binom{a-1}{s}\binom{b+1}{r-s}\left(\frac a{a-1}\right)^s\left(\frac b{b+1}\right)^{r-s}
 -\binom as\binom b{r-s}.
\]
Define
\[
 R_s=\frac{\binom{a-1}{s}\binom{b+1}{r-s}\left(\frac a{a-1}\right)^s\left(\frac b{b+1}\right)^{r-s}}{\binom as\binom b{r-s}}
 \qquad (1\le s\le r-1).
\]
Then $D_s$ has the sign of $R_s-1$.
A direct simplification gives
\[
 R_s=\frac{a-s}{a}\left(\frac a{a-1}\right)^s\frac{b+1}{b+1-r+s}\left(\frac b{b+1}\right)^{r-s}.
\]
For $1\le s\le r-2$,
\[
 \frac{R_{s+1}}{R_s}=\frac{a(a-s-1)}{(a-1)(a-s)}\cdot \frac{(b+1)(b+1-r+s)}{b(b+2-r+s)}<1.
\]
Indeed, the first factor is strictly smaller than $1$ because the denominator minus the numerator is $s>0$, and the second factor is strictly smaller than $1$ because the denominator minus the numerator is $r-s-1>0$.
Hence $R_1,\ldots,R_{r-1}$ is strictly decreasing.

Moreover, Bernoulli's inequality gives
\[
 R_1=\frac{b+1}{b-r+2}\left(\frac b{b+1}\right)^{r-1}>1
\]
and
\[
 R_{r-1}=\frac{a-r+1}{a}\left(\frac a{a-1}\right)^{r-1}<1.
\]
Thus the coefficient sequence $D_1,\ldots,D_{r-1}$ has exactly one sign change, ignoring possible zero coefficients.
By Descartes' rule of signs, the polynomial
\[
 \frac{\Delta(t)}{t}=D_1+D_2t+\cdots+D_{r-1}t^{r-2}
\]
has at most one positive zero, counted with multiplicity.
Since $D_1>0$, $\Delta(t)>0$ for all sufficiently small positive $t$.
Therefore it remains to prove
\begin{equation}\label{eq:pone-delta-c}
 \Delta(c)>0.
\end{equation}

At $t=c$, the two multisets are
\[
 U=\{c,\ldots,c\}_{a\text{ times}}\cup\{1,\ldots,1\}_{b\text{ times}}
\]
and
\[
 U^+=\left\{\frac{ac}{a-1},\ldots,\frac{ac}{a-1}\right\}_{a-1\text{ times}}
 \cup\{c,\ldots,c\}_{b+1\text{ times}}.
\]
Because $a\ge b+2$, we have
\[
 c<\frac{ac}{a-1}<1.
\]
Thus every entry of $U^+$ lies in the interval $[c,1]$.
Moreover,
\[
        \sum_{u\in U}u
        =
        ac+b,
\]
whereas, using $c=b/(b+1)$,
\[
        \sum_{u\in U^+}u
        =
        (a-1)\frac{ac}{a-1}+(b+1)c
        =
        ac+(b+1)c
        =
        ac+b.
\]
Hence $U$ and $U^+$ have the same total sum.
Applying Lemma~\ref{lem:pone-robin} with $\alpha=c$, $\beta=1$, $u=U$, and $v=U^+$, we obtain
\begin{equation}\label{eq:pone-er}
 e_r(U^+)\ge e_r(U).
\end{equation}

Now compare the monochromatic $r$-subsets within the two classes.
Define, for real $x\ge r$,
\[
 L_r(x)=\binom xr-\binom{x-1}{r}\left(\frac x{x-1}\right)^r.
\]
We claim that $L_r$ is strictly increasing on $[r,\infty)$.
Indeed,
\[
 L_r(x)=A_r(x)B_r(x),
\]
where
\[
 A_r(x)=\frac{x(x-2)(x-3)\cdots(x-r+1)}{r!(x-1)^{r-1}}
\]
and
\[
 B_r(x)=(x-1)^r-(x-r)x^{r-1}.
\]
The logarithmic derivative of $A_r$ is
\[
 \frac{A_r'(x)}{A_r(x)}=\frac1x+\sum_{j=2}^{r-1}\frac1{x-j}-\frac{r-1}{x-1}.
\]
This is positive for $x\ge r$: indeed,
\[
 \frac1x+\frac1{x-2}-\frac2{x-1}=\frac{2}{x(x-1)(x-2)}>0,
\]
and, for $j\ge3$, each term $1/(x-j)-1/(x-1)$ is positive.
Hence $A_r$ is strictly increasing and positive.

For $B_r$, put $x=r+y$, $y\ge0$.
Then
\[
 B_r(r+y)=(r-1+y)^r-y(r+y)^{r-1}.
\]
In this polynomial in $y$, the coefficient of $y^j$ for $1\le j\le r-2$ is
\[
 \binom rj(r-1)^{r-j}-\binom{r-1}{j-1}r^{r-j}
 =\binom{r-1}{j-1}r^{r-j}\left(\frac rj\left(1-\frac1r\right)^{r-j}-1\right)>0.
\]
The strict positivity follows from Bernoulli's inequality, since for $1\le j\le r-2$,
\[
 \left(1-\frac1r\right)^{r-j}>1-\frac{r-j}{r}=\frac jr,
\]
and hence $\frac rj(1-1/r)^{r-j}>1$.
The constant term is positive, and the coefficients of $y^{r-1}$ and $y^r$ are zero.
Hence $B_r$ is positive and strictly increasing on $[r,\infty)$.
Therefore $L_r=A_rB_r$ is strictly increasing.

Let $M$ and $M^+$ denote the monochromatic contributions at $t=c$ before and after the balancing move.
Then
\begin{align*}
 M-M^+
&=\left[\binom ar c^r+\binom br\right]
 -\left[\binom{a-1}{r}\left(\frac{ac}{a-1}\right)^r+\binom{b+1}{r}c^r\right]\\
&=c^r\bigl(L_r(a)-L_r(b+1)\bigr)>0,
\end{align*}
since $a\ge b+2$.
Combining this strict inequality with \eqref{eq:pone-er}, we get
\[
 \Delta(c)=\bigl(e_r(U^+)-e_r(U)\bigr)+(M-M^+)>0.
\]
This proves \eqref{eq:pone-delta-c}.

We now return to the Descartes step.
Since $\Delta(t)/t$ has at most one positive zero counted with multiplicity, it has no zero in $(0,c]$.
Indeed, a sign-changing zero in $(0,c)$ would force a second positive zero before reaching the positive value at $c$, while a non-sign-changing interior zero would have even multiplicity and hence would already count at least twice.
Also $\Delta(c)>0$, so $c$ itself is not a zero.
Therefore $\Delta(t)>0$ for every $0<t\le c$, completing the proof.
\end{proof}

We now complete the proof of \cref{prop:pone-balancing}. 

\begin{proof}[Proof of \cref{prop:pone-balancing}]
Assume that a maximizing tuple is not balanced.
Choose two classes $A,B$ of sizes $a,b$ with $a\ge b+2$.
By \cref{lem:no-small-class-unified}, $b\ge r-1$.
Let $\alpha,\beta$ be the corresponding class weights in a nonnegative eigenvector.
\Cref{lem:pone-necessary} gives $\alpha/\beta<b/(b+1)$.

Perform the balancing move $(a,b)\mapsto(a-1,b+1)$ and use the redistributed test vector described above.
By \eqref{eq:pone-decomp} and \eqref{eq:pone-external-nonnegative}, all terms in the external sum are nonnegative.
By \cref{lem:pone-mixed}, the mixed two-class term is strictly positive.
Thus
\[
 P_{\rm new}>P_{\rm old}=\lambda(Q(\mathbf n)),
\]
so the new complete $k$-chromatic graph has Lagrangian larger than $\Lambda_1(\mathbf n)$, contradicting the maximality of $\mathbf n$.
Hence every maximizing tuple is balanced.
\end{proof}







\section{Proof of Theorem~\ref{thm:evaluation-Qkr}}
\label{sec:evaluation-Qkr}

In this section, we present the proof of Theorem~\ref{thm:evaluation-Qkr}. 
We write $[z^r]f(z)$ for the coefficient of $z^r$ in $f(z)$.

\begin{lemma}
\label{lem:edge-count-Qkr}
Let $r\ge3$, $k\ge2$, and $n>(r-1)k$.
Write
\[
        n=kq+t,\qquad 0\le t<k.
\]
Then
\[
        |Q_k^r(n)|
        =
        \binom nr
        -
        t\binom{q+1}{r}
        -
        (k-t)\binom q r.
\]
Moreover,
\[
        |Q_k^r(n)|
        \le
        \binom nr-k\binom{n/k}{r},
\]
with equality if and only if $t=0$.
\end{lemma}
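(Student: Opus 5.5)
The edge-count formula is immediate. In $Q_k^r(n)$ there are $t$ classes of size $q+1$ and $k-t$ classes of size $q$. An $r$-set fails to be an edge exactly when it lies inside a single class, and distinct classes are disjoint. Subtracting the monochromatic $r$-sets class by class from $\binom nr$ therefore gives
\[
|Q_k^r(n)|=\binom nr-t\binom{q+1}{r}-(k-t)\binom qr .
\]

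For the inequality it suffices to prove
\[
t\binom{q+1}{r}+(k-t)\binom qr\ge k\binom{n/k}{r},
\]
with equality iff $t=0$. The plan is to use convexity of the polynomial $f(x)=\binom xr=x(x-1)\cdots(x-r+1)/r!$ on $[r-1,\infty)$.

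First we check the range. Since $n>(r-1)k$, we have $q\ge r-1$, so all arguments $q$, $q+1$ and $n/k$ lie in $[r-1,\infty)$.

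Next we show that $f$ is strictly convex on $[r-1,\infty)$. The roots of $f$ are $0,1,\dots,r-1$. For $x>r-1$ we can write
\[
\frac{f''}{f}=\Bigl(\sum_j \frac1{x-j}\Bigr)^2-\sum_j\frac1{x-j^{}}\,{}^{2},
\]
where the second sum is $\sum_j (x-j)^{-2}$. Since all terms $1/(x-j)$ are positive and $r\ge2$, the square of the sum strictly exceeds the sum of squares, so $f''>0$ there. An alternative route is Rolle's theorem: all roots of $f''$ lie in $[0,r-1]$, and $f''$ has positive leading coefficient, so $f''>0$ beyond $r-1$. Either argument gives strict convexity on $(r-1,\infty)$. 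By continuity, convexity extends to the closed interval, and strict convexity holds on any interval of positive length inside $[r-1,\infty)$.

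Finally we apply Jensen's inequality. The weighted average of the class sizes is
\[
\frac{t(q+1)+(k-t)q}{k}=\frac nk .
\]
Jensen gives
\[
\frac tk f(q+1)+\frac{k-t}k f(q)\ge f(n/k),
\]
which is the desired bound. If $t=0$, all sizes equal $n/k$ and equality holds. If $0<t<k$, the two points $q$ and $q+1$ are distinct, both weights are positive, and strict convexity on $[q,q+1]\subset[r-1,\infty)$ gives strict inequality.

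The only mildly delicate step is the strict convexity of $\binom xr$ on $[r-1,\infty)$, in particular at the left endpoint $q=r-1$. This is handled by the root-location argument together with the observation that strict convexity on the open interval suffices for points of $[q,q+1]$ with $q\ge r-1$.
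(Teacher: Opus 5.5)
Your proposal is correct and takes the paper's route: count the monochromatic $r$-sets class by class to get the formula, then apply Jensen's inequality to the strictly convex polynomial $\binom{x}{r}$ at the points $q,q+1\ge r-1$, with strictness coming from $0<t<k$. The paper only asserts the strict convexity and the range check; you supply both, the convexity via $f''/f=2\sum_{i<j}(x-i)^{-1}(x-j)^{-1}>0$ (or the Rolle argument) and the range via the verification that $q\ge r-1$.
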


\begin{proof}
The non-edges of $Q_k^r(n)$ are precisely the $r$-sets contained in a single colour class, giving the stated formula for $|Q_k^r(n)|$.

Since $n/k>r-1$, the function $x\mapsto \binom{x}{r}$ is strictly convex on the interval relevant to us.
Therefore
\[
        t\binom{q+1}{r}+(k-t)\binom q r
        \ge
        k\binom{q+t/k}{r}
        =
        k\binom{n/k}{r}.
\]
The inequality is strict when $0<t<k$.
\end{proof}

By \cref{lem:quoted-holder-reduction} and \cref{lem:edge-count-Qkr}, \cref{thm:evaluation-Qkr} is reduced to the following proposition. 

\begin{proposition}
\label{prop:pone-evaluation-Qkr}
Let $r\ge3$, $k\ge2$, and $n>(r-1)k$.
Then
\[
        \lambda^{(1)}(Q_k^r(n))
        \le
        r!\left(\binom nr-k\binom{n/k}{r}\right)n^{-r}.
\]
If $k\mid n$, equality holds.
If $k\nmid n$, the inequality is strict.
\end{proposition}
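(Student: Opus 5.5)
Our plan is to reduce the Lagrangian of $Q_k^r(n)$ to a one-variable problem and then bound that function directly. Write $n=kq+t$ with $0\le t<k$, so there are $t$ classes of size $q+1$ and $k-t$ classes of size $q$. By \cref{lem:quoted-complete-chromatic-positivity}, a maximizer $x$ of $P_{Q_k^r(n)}$ on the simplex is positive and constant on each class. We first show that classes of equal size carry equal values. Write
\[
P_Q(x)/r! \;=\; e_r(x)\;-\;\sum_i \binom{n_i}{r}a_i^r .
\]
Take two classes of the same size and replace both values by their average. This preserves the $\ell_1$-mass. It does not decrease $e_r(x)$, by Schur-concavity of $e_r$ (the case used in \cref{lem:pone-robin}). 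It does not increase $\binom{n_i}{r}a_i^r+\binom{n_j}{r}a_j^r$, by convexity of $z\mapsto z^r$. Hence we may assume the value is $\alpha$ on every class of size $q+1$ and $\beta$ on every class of size $q$. In particular, when $k\mid n$ the maximizer is the uniform vector $x\equiv 1/n$, and evaluating $P$ there gives exactly $r!\bigl(\binom nr-k\binom{n/k}{r}\bigr)n^{-r}$. This settles the equality case.

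For $0<t<k$ we parametrize by the total mass $w=t(q+1)\alpha$ on the larger classes, so that $(k-t)q\beta=1-w$. We then write
\[
g(w)=e_r\bigl(\alpha^{(t(q+1))},\beta^{((k-t)q)}\bigr)-t\binom{q+1}{r}\alpha^r-(k-t)\binom{q}{r}\beta^r .
\]
Expanding $e_r$ by the numbers of vertices taken from the two groups shows that $g$ is a polynomial of degree $r$ in $w$. The next step is to localize the maximizer of $g$ to the interval between the equal-vertex-mass point $w_1=t(q+1)/n$ and the equal-class-mass point $w_0=t/k$. We expect $w_1\le w\le w_0$. The upper bound $w\le w_0$, i.e.\ $(q+1)\alpha\le q\beta$ (each larger class carries at most the mass of each smaller class), should follow from the vertex-moving comparison in \cref{lem:pone-necessary} together with a mass-switching argument as in \cref{lem:t-mass-switch} with $t=1$. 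The lower bound $\alpha\le\beta$ should follow from the Lagrange conditions at $p=1$: the two partial derivatives of $P$ must agree, and we compare them exactly as in \cref{lem:opposite-order}.

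It then remains to prove $g(w)<\binom nr n^{-r}-k\binom{n/k}{r}n^{-r}$ for all $w\in[w_1,w_0]$. We will substitute $w=(1-\theta)w_1+\theta w_0$ and expand $T-g$ in the Bernstein basis $\binom{r}{j}\theta^j(1-\theta)^{r-j}$ on $[0,1]$, where $T$ denotes the target bound. Positivity of all Bernstein coefficients then suffices. The coefficient at $\theta=0$ is
\[
T-g(w_1)\;=\;n^{-r}\Bigl(t\binom{q+1}{r}+(k-t)\binom qr-k\binom{n/k}{r}\Bigr),
\]
which is strictly positive by the strict convexity in \cref{lem:edge-count-Qkr}. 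The intermediate coefficients we will attack by writing each as a sum of $e_r$-type contributions, using Maclaurin's inequality against the monochromatic defects.

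The main obstacle will be the coefficients near $\theta=1$, i.e.\ the endpoint $w_0$. There all classes have equal mass $1/k$, and the comparison with $k\binom{n/k}{r}$ is delicate, since the target is attained only in the divisible case. For this endpoint we plan a sharp direct estimate. With equal class masses, the monochromatic term is $\sum_i\binom{n_i}{r}(kn_i)^{-r}$, and $x\mapsto\binom{x}{r}x^{-r}$ is increasing in $x$. Meanwhile, $e_r$ at $w_0$ falls short of $\binom nr n^{-r}$ by Schur-concavity. We would combine these two effects with a Bernoulli-type bound to obtain a strict inequality. If the Bernstein coefficients fail to be uniformly positive, the fallback is to check concavity of $T-g$ on $[w_1,w_0]$ and use the two endpoint estimates.
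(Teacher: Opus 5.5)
You have the paper's architecture: the class-constant reduction, averaging over equal-size classes, reduction to one variable, a Bernstein expansion between the equal-vertex-weight and equal-class-mass points, and Maclaurin for the middle coefficients. By homogeneity, your Bernstein coefficients along the linear path in $w$ are positive multiples of the paper's $\Gamma_m$. However, the localization step is stated backwards, and the endpoint estimate is missing its key idea.

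\textbf{Localization.} Since $w_1-w_0=\frac{t(q+1)}{n}-\frac tk=\frac{t(k-t)}{kn}>0$, the interval is $[w_0,w_1]$, not $[w_1,w_0]$. At the optimum one has $\alpha\le\beta\le\frac{q+1}{q}\alpha$, so the larger classes carry \emph{more} total mass: $(q+1)\alpha\ge q\beta$. The inequality $(q+1)\alpha\le q\beta$ you plan to prove is false. For example, with $r=3$, $k=2$, $n=5$, the optimum puts mass $(\sqrt{13}-2)/3\approx0.535>\tfrac12$ on the class of size $3$.

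The tools you cite actually give the opposite directions:
\begin{itemize}
\item The switch of \cref{lem:t-mass-switch} with $t=1$ preserves the $\ell_1$-mass and moves the larger mass onto the larger class. It strictly increases $P$, via the $(\ell,m)=(r-1,1)$ term. This yields the \emph{lower} bound $w\ge w_0$.
\item The inequality $\alpha\le\beta$ from the $p=1$ Lagrange conditions gives $n\alpha\le1$. This yields the \emph{upper} bound $w\le w_1$, not a lower one.
\end{itemize}
With the directions swapped, this structural localization is a valid substitute for the paper's sign analysis of $F'$, and your parametrization is unaffected.

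\textbf{Endpoint $w_0$.} This is the real gap. After scaling by $k^r$, the endpoint inequality reads $D_{\mathrm{all}}>D_{\mathrm{mono}}$, where:
\begin{itemize}
\item $D_{\mathrm{all}}$ is the shortfall of $e_r$ of the vertex-weight vector below its value at the constant vector with the same sum;
\item $D_{\mathrm{mono}}=k\psi_r(\nu)-t\psi_r(q+1)-(k-t)\psi_r(q)$ is the Jensen gap of $\psi_r(y)=\binom yr y^{-r}$.
\end{itemize}
Monotonicity of $\psi_r$ says nothing about a Jensen gap. Since $\psi_r''<0$ for large $y$, $D_{\mathrm{mono}}>0$ is a genuine obstruction. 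Both deficits are of order $k\alpha(1-\alpha)q^{-3}$ with $\alpha=t/k$. Their ratio tends to $k^{r-2}$, which is only $2$ when $k=2$, $r=3$. So plain Schur-concavity plus a first-order Bernoulli bound cannot separate them.

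The paper supplies matching second-order bounds on both sides:
\begin{itemize}
\item A stability form of Maclaurin: $\binom Nr c^r-e_r(w)\ge\frac12\binom{N-2}{r-2}m^{r-2}\sum_i(w_i-c)^2$. This follows from the exact gain $\frac{(x-y)^2}{4}e_{r-2}$ of each pairwise averaging.
\item An explicit bound $-\psi_r''(y)\le\frac{y-r+1}{(r-2)!\,y^4}$, proved via a polynomial with nonnegative coefficients, which controls the Jensen gap.
\item A scalar comparison using $n\ge2q+1$.
\end{itemize}
Your concavity fallback needs this same endpoint bound, so it does not avoid the difficulty.

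Relatedly, for the middle coefficients Maclaurin must be paired with strict convexity of $y\mapsto\binom yr y^{-m}$ for $m\le r-1$. This convexity fails at $m=r$, which is exactly why the last coefficient needs separate treatment.
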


\begin{proof}
Let $Q=Q_k^r(n)$, and let $V_1,\ldots,V_k$ be its colour classes.

By \cref{lem:quoted-basic-variational,lem:quoted-complete-chromatic-positivity}, an optimal vector may be chosen nonnegative and is constant on each colour class.

For a colour class $V_i$, let $b_i$ be its total weight.
Thus, after the preceding averaging, every vertex in $V_i$ has weight $b_i/|V_i|$, and
\[
        b_1+\cdots+b_k=1.
\]

We shall also use that among colour classes of the same size, the total class masses may be assumed equal.
Indeed, take two classes $U,V$ of the same size $m$, with total masses $b_U,b_V$, and replace these masses by
\[
        \bar b=\frac{b_U+b_V}{2}.
\]
All other class masses are fixed.
If an edge uses $s$ vertices from $U\cup V$, with $1\le s\le r-1$, and also uses at least one vertex outside $U\cup V$, then the relevant local contribution is the $s$-th elementary symmetric sum of the $2m$-vector
\[
        \underbrace{b_U/m,\ldots,b_U/m}_{m},
        \underbrace{b_V/m,\ldots,b_V/m}_{m}.
\]
For fixed total $b_U+b_V$, this is maximized when all $2m$ coordinates are equal.
If an edge uses all $r$ vertices from $U\cup V$, then its local contribution is
\[
        e_r(w)
        -
        \binom mr\left(\frac{b_U}{m}\right)^r
        -
        \binom mr\left(\frac{b_V}{m}\right)^r,
\]
where $w$ is the above $2m$-vector.
The first term does not decrease by Schur-concavity, while the subtracted term decreases by convexity of $x^r$.
Thus this averaging does not decrease $P_Q$.
Iterating proves the claim.

Write
\[
        n=kq+t,\qquad 0\le t<k.
\]
Since $n>(r-1)k$, we have $q\ge r-1$.

If $t=0$, all classes have the same size $q=n/k$.
The preceding equal-size class averaging shows that an optimal vector may be chosen with all class masses equal to $1/k$, hence all vertex weights equal to $1/n$.
Therefore
\[
        \lambda^{(1)}(Q_k^r(n))
        =
        r!\left(\binom nr-k\binom{n/k}{r}\right)n^{-r}.
\]
So assume henceforth that
\[
        1\le t\le k-1.
\]

Set
\[
        A=t(q+1),\qquad B=(k-t)q,\qquad n=A+B.
\]
Let $x\in[0,1]$ be the total mass on the $t$ classes of size $q+1$.
Then each large-class vertex has weight
\[
        a=\frac{x}{A},
\]
and each small-class vertex has weight
\[
        b=\frac{1-x}{B}.
\]
Define
\[
\begin{aligned}
        F(x)
        &:=
        \sum_{j=0}^{r}
        \binom Aj\binom B{r-j}
        \left(\frac{x}{A}\right)^j
        \left(\frac{1-x}{B}\right)^{r-j}  \\
        &\quad
        -
        t\binom{q+1}{r}\left(\frac{x}{A}\right)^r
        -
        (k-t)\binom q r\left(\frac{1-x}{B}\right)^r .
\end{aligned}
\]
Then
\[
        \lambda^{(1)}(Q_k^r(n))=r!\max_{0\le x\le1}F(x).
\]

Let
\[
        \tau=\frac tk,
        \qquad
        \xi=\frac{A}{n}
        =
        \frac{t(q+1)}{n}.
\]
The point $x=\tau$ corresponds to equal colour-class masses, while $x=\xi$ corresponds to equal vertex weights.

We first localize the maximum of $F$.

\medskip

\begin{claim}\label{claim:localize-maximum}
Every maximizer of $F$ lies in $[\tau,\xi]$.
\end{claim}

\smallskip

For $0<x<1$, put
\[
        R=\frac ba.
\]
Let
\[
        S_{r-2}(a,b)
        =
        \sum_{j=0}^{r-2}
        \binom{A-1}{j}\binom{B-1}{r-2-j}
        a^j b^{r-2-j}.
\]

A generating-function differentiation gives
\begin{equation}
        F'(x)
        =
        (b-a)S_{r-2}(a,b)
        -
        \binom q{r-1}a^{r-1}
        +
        \binom{q-1}{r-1}b^{r-1}.
\label{eq:derivative}
\end{equation}
Indeed, differentiating the all-$r$-set term gives
\[
\frac{d}{dx}
\sum_{j=0}^r
\binom Aj\binom B{r-j}a^j b^{r-j}
=
e_{r-1}(a^{A-1},b^B)-e_{r-1}(a^A,b^{B-1})
=
(b-a)S_{r-2}(a,b),
\]
because $da/dx=1/A$, $db/dx=-1/B$, and
\[
e_{r-1}(a^{A-1},b^B)-e_{r-1}(a^A,b^{B-1})
=
(b-a)e_{r-2}(a^{A-1},b^{B-1}).
\]
Here $e_j(a^M,b^N)$ denotes the $j$-th elementary symmetric sum of the multiset with $M$ copies of $a$ and $N$ copies of $b$.

The two monochromatic correction terms contribute
\[
-\frac{r}{q+1}\binom{q+1}{r}a^{r-1}
+
\frac{r}{q}\binom qr b^{r-1}
=
-\binom q{r-1}a^{r-1}
+
\binom{q-1}{r-1}b^{r-1}.
\]
Therefore
\[
F'(x)
=
(b-a)S_{r-2}(a,b)
-
\binom q{r-1}a^{r-1}
+
\binom{q-1}{r-1}b^{r-1}.
\]

If $x\ge\xi$, then $a\ge b$.
Hence the first term in \eqref{eq:derivative} is nonpositive, and
\[
        \binom{q-1}{r-1}b^{r-1}
        \le
        \binom q{r-1}a^{r-1}.
\]
Thus $F'(x)\le0$ for $x\ge\xi$.

It remains to prove $F'(x)\ge0$ for $0<x\le\tau$.
The endpoint $x=0$ follows by continuity.
In this range,
\[
        R=\frac ba\ge\frac{q+1}{q}=:\rho.
\]
Dividing \eqref{eq:derivative} by $a^{r-1}$, we get
\begin{equation}
\frac{F'(x)}{a^{r-1}}
=
(R-1)
\sum_{j=0}^{r-2}
\binom{A-1}{j}\binom{B-1}{r-2-j}R^{r-2-j}
-
\binom q{r-1}
+
\binom{q-1}{r-1}R^{r-1}.
\label{eq:normalized}
\end{equation}
The right-hand side is increasing in $R\ge1$.
Hence it suffices to prove its nonnegativity at $R=\rho$.

Since $A-1\ge q$ and $B-1\ge q-1$, the sum in \eqref{eq:normalized} is at least
\[
        \sum_{j=0}^{r-2}
        \binom qj\binom{q-1}{r-2-j}\rho^{r-2-j}.
\]
Keeping only the two terms $j=r-2$ and $j=r-3$, it is enough to prove
\begin{equation}
        \frac1q\binom q{r-2}
        +
        \frac{q-1}{q}\rho\binom q{r-3}
        +
        \binom{q-1}{r-1}\rho^{r-1}
        \ge
        \binom q{r-1}.
\label{eq:rho-bound}
\end{equation}
Let $m=r-1$.
By Bernoulli's inequality,
\[
        \rho^m=\left(1+\frac1q\right)^m\ge 1+\frac mq.
\]
Thus the left-hand side of \eqref{eq:rho-bound} is at least
\[
        \frac1q\binom q{m-1}
        +
        \left(1-\frac1{q^2}\right)\binom q{m-2}
        +
        \binom{q-1}{m}
        +
        \frac mq\binom{q-1}{m}.
\]
Since
\[
        \binom qm=\binom{q-1}{m}+\binom{q-1}{m-1},
\]
it remains to verify
\begin{equation}
        \frac1q\binom q{m-1}
        +
        \left(1-\frac1{q^2}\right)\binom q{m-2}
        +
        \frac mq\binom{q-1}{m}
        \ge
        \binom{q-1}{m-1}.
\label{eq:reduced}
\end{equation}
Put $d=q-m+1\ge1$.
After dividing by $\binom{q-1}{m-1}$, inequality \eqref{eq:reduced} becomes
\[
        \frac1d
        +
        \left(1-\frac1{q^2}\right)\frac{q(m-1)}{d(d+1)}
        \ge
        \frac mq .
\]
Multiplying by $qd(d+1)$, this is equivalent to
\[
        q(d+1)+(q^2-1)(m-1)-md(d+1)\ge0.
\]
Since $q=m+d-1$, the left-hand side is
\[
        (m-1)^2(2d+m-1)\ge0.
\]
Thus $F'(x)\ge0$ for $0<x\le\tau$.
\Cref{claim:localize-maximum} follows.

\medskip

We now work on the interval $[\tau,\xi]$.
Set
\[
        R=\frac ba=1+\frac{s}{q},
        \qquad 0\le s\le1.
\]
Then
\[
        a=\frac1{A+BR},
        \qquad
        b=\frac{R}{A+BR}.
\]
Define
\[
        N(R)
        =
        \sum_{j=0}^{r}
        \binom Aj\binom B{r-j}R^{r-j}
        -
        t\binom{q+1}{r}
        -
        (k-t)\binom q r R^r.
\]
Then
\[
        F(x)=\frac{N(R)}{(A+BR)^r}.
\]

Put
\[
        \nu=\frac nk=q+\frac tk,
        \qquad
        C=\binom nr-k\binom{\nu}{r}.
\]
The desired inequality $F(x)\le Cn^{-r}$ is equivalent to
\[
        P(s)
        :=
        C\bigl(A+B(1+s/q)\bigr)^r
        -
        n^r N(1+s/q)
        \ge0.
\]
Write the Bernstein expansion
\[
        P(s)=
        \sum_{m=0}^{r}\Gamma_m\binom rm s^m(1-s)^{r-m}.
\]

\begin{claim}\label{claim:Bernstein-coeff}
With the notation of Proposition~\ref{prop:pone-evaluation-Qkr}, the Bernstein coefficients of $P(s)$ are given by
\[
\Gamma_m
=
C n^{r-m}(k(q+1))^m
-
n^r\left[
\frac{\binom{n-m}{r-m}}{\binom rm}S_m
-
t\binom{q+1}{r}
-
(k-t)\binom qr\rho^m
\right].
\]
\end{claim}

We compute the degree-$r$ Bernstein coefficients of each term in
\[
        P(s)=C\bigl(A+B(1+s/q)\bigr)^r
        -n^r N(1+s/q)
\]
separately.
Recall that the degree-$r$ Bernstein basis is
\[
        \binom rm s^m(1-s)^{r-m},
        \qquad 0\le m\le r.
\]

First,
\[
        A+B(1+s/q)
        =
        A+B+\frac{B}{q}s
        =
        n+(k-t)s.
\]
Since
\[
        k(q+1)-n=k(q+1)-(kq+t)=k-t,
\]
we may rewrite this as
\[
        A+B(1+s/q)=n(1-s)+k(q+1)s.
\]
Therefore
\[
\bigl(A+B(1+s/q)\bigr)^r
=
\sum_{m=0}^r
\binom rm
n^{r-m}\bigl(k(q+1)\bigr)^m
s^m(1-s)^{r-m}.
\]
Thus the $m$-th Bernstein coefficient of the first term $C(A+B(1+s/q))^r$ is
\[
        C n^{r-m}\bigl(k(q+1)\bigr)^m .
\]

It remains to compute the Bernstein coefficients of $N(1+s/q)$.
Put
\[
        \rho=\frac{q+1}{q},
        \qquad
        R=1+\frac{s}{q}=(1-s)+\rho s.
\]
Let $\Omega$ be a set of size $n=A+B$, split into two parts
\[
        \Omega_A,\qquad \Omega_B,
        \qquad |\Omega_A|=A,\quad |\Omega_B|=B.
\]
Define
\[
        \theta_u=
        \begin{cases}
        1, & u\in \Omega_A,\\
        \rho, & u\in \Omega_B.
        \end{cases}
\]
Then
\[
        S_m=e_m(\theta_u:u\in\Omega)
        =
        \sum_{i=0}^m \binom Bi\binom A{m-i}\rho^i.
\]

The first part of $N(R)$ is
\[
        \sum_{j=0}^r \binom Aj\binom B{r-j}R^{r-j}.
\]
This can be written as
\[
        \sum_{E\in\binom{\Omega}{r}}
        \prod_{u\in E}\bigl((1-s)+\theta_u s\bigr).
\]
Indeed, a set $E$ with $j$ vertices in $\Omega_A$ and $r-j$ vertices in $\Omega_B$ contributes $R^{r-j}$.

For each fixed $E\in\binom{\Omega}{r}$,
\[
\prod_{u\in E}\bigl((1-s)+\theta_u s\bigr)
=
\sum_{m=0}^r
\left(
\sum_{\substack{T\subseteq E\\ |T|=m}}
\prod_{u\in T}\theta_u
\right)
s^m(1-s)^{r-m}.
\]
Hence its $m$-th Bernstein coefficient is
\[
        \frac{1}{\binom rm}
        \sum_{\substack{T\subseteq E\\ |T|=m}}
        \prod_{u\in T}\theta_u.
\]
Summing over all $E\in\binom{\Omega}{r}$, the $m$-th Bernstein coefficient of the first part of $N(1+s/q)$ is
\[
\frac{1}{\binom rm}
\sum_{E\in\binom{\Omega}{r}}
\sum_{\substack{T\subseteq E\\ |T|=m}}
\prod_{u\in T}\theta_u.
\]
Interchanging the order of summation, each fixed $m$-set $T\subseteq\Omega$ is contained in exactly $\binom{n-m}{r-m}$ sets $E\in\binom{\Omega}{r}$.
Therefore this coefficient equals
\[
        \frac{\binom{n-m}{r-m}}{\binom rm}
        \sum_{T\in\binom{\Omega}{m}}\prod_{u\in T}\theta_u
        =
        \frac{\binom{n-m}{r-m}}{\binom rm}S_m.
\]

Now consider the two monochromatic correction terms in $N(R)$.
The constant term
\[
        t\binom{q+1}{r}
\]
has every degree-$r$ Bernstein coefficient equal to
\[
        t\binom{q+1}{r},
\]
because
\[
        1=\sum_{m=0}^r \binom rm s^m(1-s)^{r-m}.
\]
Also,
\[
        R^r=((1-s)+\rho s)^r
        =
        \sum_{m=0}^r \binom rm \rho^m s^m(1-s)^{r-m},
\]
so the $m$-th Bernstein coefficient of
\[
        (k-t)\binom qr R^r
\]
is
\[
        (k-t)\binom qr\rho^m.
\]
Consequently, the $m$-th Bernstein coefficient of $N(1+s/q)$ is
\[
        \frac{\binom{n-m}{r-m}}{\binom rm}S_m
        -
        t\binom{q+1}{r}
        -
        (k-t)\binom qr\rho^m.
\]

Combining this with the first-term computation and using
\[
        P(s)=C\bigl(A+B(1+s/q)\bigr)^r-n^rN(1+s/q),
\]
we obtain
\[
\Gamma_m
=
C n^{r-m}(k(q+1))^m
-
n^r\left[
\frac{\binom{n-m}{r-m}}{\binom rm}S_m
-
t\binom{q+1}{r}
-
(k-t)\binom qr\rho^m
\right],
\]
as claimed.

\medskip

\begin{claim}\label{claim:positive-lower-bernstein-coefficients}
For every $0\le m\le r-1$, one has $\Gamma_m>0$.
\end{claim}

\smallskip

Let
\[
        \rho=\frac{q+1}{q},
        \qquad
        L=\frac{k(q+1)}{n}
        =
        \frac{q+1}{\nu}.
\]
A direct Bernstein-coefficient computation gives
\begin{equation}
\begin{aligned}
        \Gamma_m
        &=
        C\,n^{r-m}\bigl(k(q+1)\bigr)^m  \\
        &\quad
        -
        n^r\left[
        \frac{\binom{n-m}{r-m}}{\binom rm}S_m
        -
        t\binom{q+1}{r}
        -
        (k-t)\binom q r\rho^m
        \right],
\end{aligned}
\label{eq:gamma}
\end{equation}
where
\[
        S_m
        =
        \sum_{i=0}^{m}\binom Bi\binom A{m-i}\rho^i .
\]
The quantity $S_m$ is the $m$-th elementary symmetric sum of the vector with $A$ entries equal to $1$ and $B$ entries equal to $\rho$.
Its average is
\[
        \frac{A+B\rho}{n}=L.
\]
By Maclaurin's inequality,
\[
        S_m\le \binom nm L^m.
\]
Using
\[
        \binom nm\binom{n-m}{r-m}
        =
        \binom nr\binom rm,
\]
we get from \eqref{eq:gamma}
\begin{equation}
\frac{\Gamma_m}{n^r}
\ge
(q+1)^m
\left[
        t\frac{\binom{q+1}{r}}{(q+1)^m}
        +(k-t)\frac{\binom q r}{q^m}
        -k\frac{\binom{\nu}{r}}{\nu^m}
\right].
\label{eq:gamma-lower}
\end{equation}

It remains to prove convexity of
\[
        g_m(y)=\frac{\binom yr}{y^m}
        \qquad (0\le m\le r-1)
\]
on $[r-1,\infty)$.
Up to the positive factor $1/r!$,
\[
        g_m(y)=y^{1-m}(y-1)(y-2)\cdots(y-r+1).
\]
For $y>r-1$, set
\[
        a=\frac1y,\qquad
        a_i=\frac1{y-i}\quad(1\le i\le r-1),
\]
and
\[
        S=\sum_{i=1}^{r-1}a_i,\qquad
        S_2=\sum_{i=1}^{r-1}a_i^2,\qquad
        h=m-1.
\]
Then
\[
        \frac{g_m''(y)}{g_m(y)}
        =
        S^2-S_2-2haS+h(h+1)a^2.
\]
Put $y_i=a_i/a=y/(y-i)>1$ and $s_0=r-1$.
Dividing by $a^2$, we obtain
\begin{equation}
        \frac{g_m''(y)}{g_m(y)a^2}
        =
        2\sum_{i<j}y_i y_j
        -
        2h\sum_i y_i
        +
        h(h+1).
\label{eq:convexity}
\end{equation}
If $h\le0$, the right-hand side is positive.
If $0\le h\le s_0-1$, then
\[
        \sum_{i<j}y_i y_j
        \ge
        (s_0-1)\sum_i y_i-\binom{s_0}{2},
\]
because
\[
        \sum_{i<j}(y_i-1)(y_j-1)>0.
\]
Substitution in \eqref{eq:convexity}, together with $\sum_i y_i\ge s_0$, gives
\[
        \frac{g_m''(y)}{g_m(y)a^2}
        \ge
        2s_0(s_0-1-h)-s_0(s_0-1)+h(h+1).
\]
Writing $d=s_0-1-h\ge0$, the right-hand side is $d(d+1)\ge0$.
In the borderline case $d=0$, the earlier strict inequality $\sum_{i<j}(y_i-1)(y_j-1)>0$ gives strict positivity.
Hence $g_m''(y)>0$ on $(r-1,\infty)$.

Since
\[
        \nu=(1-t/k)q+(t/k)(q+1),
\]
Jensen's inequality gives
\[
        t g_m(q+1)+(k-t)g_m(q)>k g_m(\nu),
\]
because $1\le t\le k-1$.
This is exactly the strict positivity of the bracket in \eqref{eq:gamma-lower}.
Therefore $\Gamma_m>0$ for $0\le m\le r-1$.

\medskip

It remains to prove $\Gamma_r>0$.
Since at $s=1$ only the highest Bernstein coefficient remains, this is equivalent to proving $P(1)>0$.
The point $s=1$ corresponds to $x=\tau=t/k$, namely to equal class masses.
We call this the endpoint inequality.

At the endpoint, every colour class has total mass $1/k$.
After multiplying by $k^r$, the desired endpoint inequality is
\begin{equation}
[z^r]\left(1+\frac z{q+1}\right)^{t(q+1)}
       \left(1+\frac zq\right)^{(k-t)q}
-
t\psi_r(q+1)-(k-t)\psi_r(q)
\le
\binom nr\nu^{-r}-k\psi_r(\nu),
\label{eq:endpoint}
\end{equation}
where
\[
        \psi_r(y)=\binom yr y^{-r}.
\]
Equivalently, define
\[
        D_{\mathrm{all}}
        =
        \binom nr\nu^{-r}
        -
        [z^r]\left(1+\frac z{q+1}\right)^{t(q+1)}
              \left(1+\frac zq\right)^{(k-t)q},
\]
and
\[
        D_{\mathrm{mono}}
        =
        k\psi_r(\nu)-t\psi_r(q+1)-(k-t)\psi_r(q).
\]
It is enough to prove
\begin{equation}
        D_{\mathrm{all}}>D_{\mathrm{mono}}.
\label{eq:defect}
\end{equation}

Let
\[
        \alpha=\frac tk,
        \qquad
        \nu=q+\alpha.
\]

\medskip

\begin{claim}\label{claim:maclaurin-stability}
If $w_1,\ldots,w_N\ge m>0$ and $\sum_i w_i=Nc$, then
\begin{equation}
        \binom Nr c^r-e_r(w_1,\ldots,w_N)
        \ge
        \frac12\binom{N-2}{r-2}m^{r-2}
        \sum_{i=1}^{N}(w_i-c)^2.
\label{eq:stability}
\end{equation}
\end{claim}

Indeed, replacing two coordinates $x,y$ by their average increases $e_r$ by exactly
\[
        \frac{(x-y)^2}{4}e_{r-2}(\text{remaining coordinates}).
\]
Since all coordinates remain at least $m$, this increase is at least
\[
        \frac{(x-y)^2}{4}\binom{N-2}{r-2}m^{r-2}.
\]
Now put
\[
        V(w)=\sum_{i=1}^N (w_i-c)^2 .
\]
For one averaging step, suppose that two coordinates \(x,y\) are replaced by
\((x+y)/2,(x+y)/2\). The total sum is preserved, all coordinates remain at least
\(m\), and
\[
        V_{\rm before}-V_{\rm after}=\frac{(x-y)^2}{2}.
\]
On the other hand, as observed above,
\[
        e_r{}_{\rm after}-e_r{}_{\rm before}
        =
        \frac{(x-y)^2}{4}e_{r-2}(\text{remaining coordinates})
        \ge
        \frac{(x-y)^2}{4}\binom{N-2}{r-2}m^{r-2}.
\]
Consequently every averaging step satisfies
\[
        e_r{}_{\rm after}-e_r{}_{\rm before}
        \ge
        \frac12\binom{N-2}{r-2}m^{r-2}
        \bigl(V_{\rm before}-V_{\rm after}\bigr).
\]

We now apply these averaging steps repeatedly, in a fixed cyclic order over all
unordered pairs of coordinates. We claim that the resulting sequence converges to the
constant vector \((c,\ldots,c)\). Indeed, \(V\) is nonincreasing and bounded below, hence
it has a limit. Consider the sequence at the beginnings of complete cycles. If a
subsequential limit \(z\) were not constant, then one full cycle of pair-averaging would
strictly decrease \(V\). By continuity of the full-cycle map, the same would hold with a
uniform positive decrease for all sufficiently close vectors, contradicting the existence of
the limit of \(V\). Thus every subsequential limit is constant. Since the total sum is
always \(Nc\), the only possible constant limit is \((c,\ldots,c)\). Hence the whole
sequence converges to \((c,\ldots,c)\).

Summing the preceding one-step inequality over the first \(T\) averaging steps gives
\[
        e_r(w^{(T)})-e_r(w^{(0)})
        \ge
        \frac12\binom{N-2}{r-2}m^{r-2}
        \bigl(V(w^{(0)})-V(w^{(T)})\bigr).
\]
Letting \(T\to\infty\), and using the convergence \(w^{(T)}\to(c,\ldots,c)\), we obtain
\[
        \binom Nr c^r-e_r(w^{(0)})
        \ge
        \frac12\binom{N-2}{r-2}m^{r-2}
        \sum_{i=1}^N(w_i^{(0)}-c)^2.
\]
This proves the claim.

In our application, $N=n$, and the vector $w$ consists of
\[
        t(q+1) \text{ entries equal to } \frac1{q+1},
        \qquad
        (k-t)q \text{ entries equal to } \frac1q.
\]
Its average is $1/\nu$, and its minimum is $1/(q+1)$.
A direct calculation gives
\[
        \sum_i\left(w_i-\frac1\nu\right)^2
        =
        \frac{k\alpha(1-\alpha)}{q(q+1)\nu}.
\]
Thus \eqref{eq:stability} yields
\begin{equation}
        D_{\mathrm{all}}
        \ge
        \frac{k\alpha(1-\alpha)}2
        \frac{\binom{n-2}{r-2}}{q(q+1)^{r-1}\nu}.
\label{eq:all-defect}
\end{equation}

\medskip

\begin{claim}\label{claim:differential-estimate}
For every $r\ge3$ and $y\ge r-1$,
\begin{equation}
        -\frac{d^2}{dy^2}\left(\binom yr y^{-r}\right)
        \le
        \frac{y-r+1}{(r-2)!y^4}.
\label{eq:differential}
\end{equation}
\end{claim}

Let $X=y-r+1$, $Y=y=X+r-1$, and
\[
        P_r(X)=X(X+1)\cdots(X+r-1).
\]
Then
\[
        \binom yr y^{-r}=\frac{P_r(X)}{r!Y^r}.
\]
A direct differentiation shows that \eqref{eq:differential} is equivalent to
\[
        E_r(X)\ge0,
\]
where
\begin{equation}
        E_r(X)
        =
        Y^2P_r''-2rYP_r'+r(r+1)P_r+r(r-1)XY^{r-2}.
\label{eq:e-polynomial}
\end{equation}
We prove that $E_r$ has nonnegative coefficients.

For $s\ge3$, define
\[
        P_s(X)=X(X+1)\cdots(X+s-1),
\]
and define $E_s$ by the analogue of \eqref{eq:e-polynomial}.
Directly,
\[
        E_3(X)=0.
\]
Using $P_{s+1}=(X+s)P_s$, a direct calculation gives
\begin{equation}
        E_{s+1}(X)=(X+s)(E_s(X)+\Delta_s(X)),
\label{eq:recurrence}
\end{equation}
where
\begin{equation}
\begin{aligned}
        \Delta_s(X)
        &=
        (2X+2s-1)P_s''-2sP_s'  \\
        &\quad
        +sX\left((s+1)(X+s)^{s-2}
        -(s-1)(X+s-1)^{s-2}\right).
\end{aligned}
\label{eq:delta}
\end{equation}
It remains to prove that $\Delta_s$ has nonnegative coefficients.

Write
\[
        P_s(X)=\sum_{j=1}^{s}c_jX^j.
\]
Then
\[
        c_j=e_{s-j}(1,2,\ldots,s-1).
\]
Let
\[
        A_s(X)=(2X+2s-1)P_s''-2sP_s'.
\]
For $0\le m\le s-2$, the coefficient of $X^m$ in $A_s$ is
\begin{equation}
        (m+1)\left((2s-1)(m+2)c_{m+2}
        -2(s-m)c_{m+1}\right).
\label{eq:coefficient}
\end{equation}
Set $\ell=s-m-1$.
Then
\[
        c_{m+1}=e_\ell(1,\ldots,s-1),
        \qquad
        c_{m+2}=e_{\ell-1}(1,\ldots,s-1).
\]

By Newton's inequalities, the normalized elementary symmetric means
\[
        \frac{e_j(1,\ldots,s-1)}{\binom{s-1}{j}}
\]
form a log-concave sequence.
Hence
\[
        \frac{e_\ell(1,\ldots,s-1)}{e_{\ell-1}(1,\ldots,s-1)}
        \le
        \frac{\binom{s-1}{\ell}}{\binom{s-1}{\ell-1}}
        \cdot
        \frac{e_1(1,\ldots,s-1)}{s-1}.
\]
Since $e_1(1,\ldots,s-1)=s(s-1)/2$, this gives
\begin{equation}
        \frac{c_{m+1}}{c_{m+2}}
        \le
        \frac{s(m+1)}{2(s-m-1)}.
\label{eq:ratio}
\end{equation}
Also,
\begin{equation}
        \frac{s(m+1)}{2(s-m-1)}
        \le
        \frac{(2s-1)(m+2)}{2(s-m)}.
\label{eq:bound}
\end{equation}
Indeed, with $\ell=s-m-1$, inequality \eqref{eq:bound} is equivalent to
\[
        \ell(2s-1)(s-\ell+1)-s(s-\ell)(\ell+1)\ge0.
\]
Writing $\ell=d+1$, this becomes
\[
        s+d(s^2+1)-d^2(s-1)\ge0,
\]
a concave quadratic in $d\in[0,s-2]$, nonnegative at both endpoints.

Combining \eqref{eq:ratio} and \eqref{eq:bound}, all coefficients in \eqref{eq:coefficient} are nonnegative.
The coefficient of $X^{s-1}$ in $A_s$ is $-2s$.

Now set
\[
        B_s(X)
        =
        sX\left((s+1)(X+s)^{s-2}
        -(s-1)(X+s-1)^{s-2}\right).
\]
The polynomial $B_s$ has nonnegative coefficients, because for each $j$,
\[
        (s+1)s^{s-2-j}-(s-1)(s-1)^{s-2-j}\ge0.
\]
Moreover, the coefficient of $X^{s-1}$ in $B_s$ is $2s$.
Hence
\[
        \Delta_s=A_s+B_s
\]
has nonnegative coefficients.
By induction from \eqref{eq:recurrence}, $E_r$ has nonnegative coefficients.
This proves \eqref{eq:differential}.

\medskip

By the standard second-order interpolation estimate, that is, if $f\in C^2([q,q+1])$ and $0\le\alpha\le1$, then
\[
f(q+\alpha)-\bigl((1-\alpha)f(q)+\alpha f(q+1)\bigr)
\le
\frac{\alpha(1-\alpha)}2
\max_{y\in[q,q+1]}(-f''(y))_+ .
\]
applied to $\psi_r(y)=\binom yr y^{-r}$,
\[
        D_{\mathrm{mono}}
        \le
        \frac{k\alpha(1-\alpha)}2
        \max_{y\in[q,q+1]}\bigl(-\psi_r''(y)\bigr)_+.
\]

Using \eqref{eq:differential},
\begin{equation}
        D_{\mathrm{mono}}
        \le
        \frac{k\alpha(1-\alpha)}2
        \max_{y\in[q,q+1]}
        \frac{y-r+1}{(r-2)!y^4}.
\label{eq:mono-defect}
\end{equation}

\medskip

\begin{claim}\label{claim:scalar-endpoint-estimate}
For every $r\ge3$ and $q\ge r-1$,
\begin{equation}
        \max_{y\in[q,q+1]}
        \frac{y-r+1}{(r-2)!y^4}
        <
        \frac{\binom{2q-2}{r-2}}{q^2(q+1)^{r-1}}.
\label{eq:scalar}
\end{equation}
\end{claim}

Let $a_0=r-1$.
Inequality \eqref{eq:scalar} is equivalent to
\begin{equation}
        \max_{y\in[q,q+1]}\frac{y-a_0}{y^4}
        <
        \frac{(2q-2)_{a_0-1}}{q^2(q+1)^{a_0}},
\label{eq:equivalent}
\end{equation}
where $(x)_s=x(x-1)\cdots(x-s+1)$.
Set
\[
        h(y)=\frac{y-a_0}{y^4}.
\]
Then
\[
        h'(y)=\frac{4a_0-3y}{y^5},
\]
so $h$ is maximized at $y=4a_0/3$, with
\begin{equation}
        h(4a_0/3)=\frac{27}{256a_0^3}.
\label{eq:maximum}
\end{equation}
Since $q\ge a_0$, for $0\le i\le a_0-2$,
\[
        2q-2-i\ge2q-a_0\ge q.
\]
Thus
\[
        (2q-2)_{a_0-1}\ge q^{a_0-1},
\]
and hence
\begin{equation}
        \frac{(2q-2)_{a_0-1}}{q^2(q+1)^{a_0}}
        \ge
        \frac1{q^3(1+1/q)^{a_0}}.
\label{eq:falling}
\end{equation}

If $q\le 4a_0/3$, then
\[
        (1+1/q)^{a_0}\le (1+1/a_0)^{a_0}<4,
\]
and so by \eqref{eq:falling},
\[
        \frac{(2q-2)_{a_0-1}}{q^2(q+1)^{a_0}}
        >
        \frac1{4q^3}
        \ge
        \frac{27}{256a_0^3}.
\]
Together with \eqref{eq:maximum}, this proves \eqref{eq:equivalent} in this case.

If $q\ge4a_0/3$, then $h$ is decreasing on $[q,q+1]$, and its maximum is
\[
        h(q)=\frac{q-a_0}{q^4}.
\]
By \eqref{eq:falling}, it is enough to prove
\[
        \frac1{q^3(1+1/q)^{a_0}}\ge \frac{q-a_0}{q^4}.
\]
Equivalently,
\[
        (1-a_0/q)(1+1/q)^{a_0}\le1.
\]
Let $\theta=a_0/q\in(0,1)$.
Taking logarithms,
\[
\begin{aligned}
        \log\left((1-\theta)(1+1/q)^{a_0}\right)
        &=
        \log(1-\theta)+a_0\log(1+1/q)\\
        &<
        \log(1-\theta)+\theta
        \le0,
\end{aligned}
\]
because $q\log(1+1/q)<1$.
This proves \eqref{eq:equivalent}, and hence \eqref{eq:scalar}.

Finally, since $1\le t\le k-1$, we have $\nu=q+t/k<q+1$ and
\[
        n=kq+t\ge 2q+1 .
\]
We claim that
\begin{equation}
        \frac{\binom{2q-2}{r-2}}{q^2(q+1)^{r-1}}
        <
        \frac{\binom{n-2}{r-2}}{q(q+1)^{r-1}\nu}.
\label{eq:comparison}
\end{equation}
Equivalently, it is enough to prove
\begin{equation}
        \frac q\nu \binom{n-2}{r-2}
        >
        \binom{2q-2}{r-2}.
\label{eq:sufficient}
\end{equation}
We prove this in two cases.

First assume $r\ge 4$.
Since $n\ge 2q+1$, we have
\[
        \binom{n-2}{r-2}
        \ge
        \binom{2q-1}{r-2}.
\]
Since $\nu<q+1$, it follows that
\[
        \frac q\nu \binom{n-2}{r-2}
        >
        \frac q{q+1}\binom{2q-1}{r-2}.
\]
Thus it remains to show that
\[
        \frac q{q+1}\binom{2q-1}{r-2}
        \ge
        \binom{2q-2}{r-2}.
\]
Using
\[
        \binom{2q-1}{r-2}
        =
        \frac{2q-1}{2q-r+1}
        \binom{2q-2}{r-2},
\]
this is equivalent to
\[
        \frac{q(2q-1)}{(q+1)(2q-r+1)}\ge 1.
\]
The numerator minus the denominator is
\[
        q(2q-1)-(q+1)(2q-r+1)
        =
        (r-4)q+(r-1)\ge 0,
\]
because $r\ge 4$.
Hence \eqref{eq:sufficient}, and therefore \eqref{eq:comparison}, holds when $r\ge4$.

It remains to consider $r=3$.
In this case \eqref{eq:sufficient} becomes
\[
        \frac q\nu (n-2)>2q-2.
\]
Since $\nu=n/k$, this is equivalent to
\[
        qk(n-2)>(2q-2)n.
\]
Substituting $n=kq+t$, the difference between the left-hand side and the right-hand side is
\[
\begin{aligned}
        qk(n-2)-(2q-2)n
        &= qk(kq+t-2)-(2q-2)(kq+t)  \\
        &= (k-2)q(kq+t)+2t  \\
        &>0,
\end{aligned}
\]
because $k\ge 2$ and $t\ge 1$.
Therefore \eqref{eq:sufficient}, and hence \eqref{eq:comparison}, also holds for $r=3$.
This proves \eqref{eq:comparison}.

Combining \eqref{eq:all-defect}, \eqref{eq:mono-defect}, \eqref{eq:scalar}, and \eqref{eq:comparison}, we obtain
\[
    D_{\mathrm{all}}>D_{\mathrm{mono}}.
\]
Thus $P(1)>0$, so
\[
        \Gamma_r>0.
\]

All Bernstein coefficients $\Gamma_0,\ldots,\Gamma_r$ are therefore positive.
Hence
\[
        P(s)>0
        \qquad(0\le s\le1).
\]
Consequently,
\[
        F(x)
        <
        \left(\binom nr-k\binom{n/k}{r}\right)n^{-r}
        \qquad(\tau\le x\le\xi).
\]
By \cref{claim:localize-maximum}, the maximum of $F$ occurs in this interval.
Thus
\[
        \lambda^{(1)}(Q_k^r(n))
        <
        r!\left(\binom nr-k\binom{n/k}{r}\right)n^{-r}
\]
when $1\le t\le k-1$.
This proves the strict case.
The equality case $t=0$ was proved earlier.
Therefore \cref{prop:pone-evaluation-Qkr} is proved.
\end{proof}

\section{Concluding remarks}
\label{sec:concluding}

The results of this paper settle the $p$-spectral extremal problem for the complete $k$-chromatic pattern.
We close with two directions in which the methods and questions of the paper may be extended.

\subsection*{Pattern and mixed-pattern constructions}

It is natural to ask whether analogous spectral extremal results can be proved for more general recursive constructions.
We use the language of patterns and mixed pattern constructions introduced in \cite{Pik14,LiuPikhurko2025}.

Recall that an $r$-graph pattern is a triple $P=(m,E,R)$, where $E$ is a collection of $r$-multisets on $[m]$ and $R\subseteq[m]$ is the set of recursive indices.
A $P$-construction is obtained recursively: one first takes a blow-up of the profile family $E$, and then inserts further $P$-constructions inside the parts indexed by $R$.
More generally, if $\mathcal P=\{P_i:i\in I\}$ is a finite family of patterns, then a $\mathcal P$-mixing construction is obtained by allowing the choice of the pattern $P_i$ to vary at each recursive step.

The present paper treats the spectral optimization of one special pattern, namely the complete $k$-chromatic pattern.
For a general pattern, or a finite family of mixed patterns, the corresponding $p$-spectral problem seems largely open.

\begin{problem}
\label{prob:pattern-spectral-radius}
Let $r\ge3$, $p\ge1$, and let $\mathcal P$ be a finite family of $r$-graph patterns.
Let $\Sigma_{\mathcal P}$ be the family of all $\mathcal P$-mixing constructions.
Determine
\[
        \Lambda_{\mathcal P}^{(p)}(n)
        =
        \max\left\{
        \lambda^{(p)}(G):
        v(G)=n,\; G\in\Sigma_{\mathcal P}
        \right\}.
\]
\end{problem}

\subsection*{Other notions of independence and colorability}

Another natural direction is to vary the notion of independence used in the definition of colorability.
The present paper uses the standard weak notion: a set is independent if it contains no edge, and a $k$-coloring is a partition of the vertex set into $k$ independent sets.
Equivalently, each edge is allowed to contain at most $r-1$ vertices from any one color class.
At the other extreme, in a strong coloring every edge has all its vertices in distinct color classes; equivalently, each color class meets every edge in at most one vertex.

These notions can be interpolated as follows.
For $1\le \ell\le r-1$, call a set $U\subseteq V(G)$ $\ell$-independent if
\[
        |e\cap U|\le \ell
        \qquad\text{for every }e\in G.
\]
Let $\chi_\ell(G)$ be the least $k$ such that $V(G)$ can be partitioned into $k$ $\ell$-independent sets.
Thus $\chi_{r-1}$ is the usual weak chromatic number considered in this paper, while $\chi_1$ is the strong chromatic number.

Given a partition $V=V_1\cup\cdots\cup V_k$, define the complete $(k,\ell)$-colorable $r$-graph on this partition to be the $r$-graph whose edges are precisely the $r$-sets $e$ satisfying
\[
        |e\cap V_i|\le \ell
        \qquad\text{for every }i\in[k].
\]
Let $Q_{k,\ell}^r(n)$ denote the member with color classes as equal as possible.
Then
\[
        Q_{k,r-1}^r(n)=Q_k^r(n),
\]
the graph studied in this paper, while $Q_{k,1}^r(n)$ is the balanced complete $k$-partite $r$-graph.

\begin{problem}
\label{prob:ell-colorability}
Let $r \ge 4$, $2\le \ell\le r-2$, $k\ge2$, and $p\ge1$.
Determine
\[
        \max\left\{
        \lambda^{(p)}(G):
        v(G)=n,\ \chi_\ell(G)\le k
        \right\}.
\]
In particular, decide when the unique extremal graph is the balanced complete $(k,\ell)$-colorable $r$-graph $Q_{k,\ell}^r(n)$.
\end{problem}


\section*{Declaration on the use of AI}

Proposition~\ref{prop:structural-hypotheses}, which implies \cite[Conjecture~7]{KNY2015} of Kang--Nikiforov--Yuan for $p\ge2$, was proved without the use of artificial intelligence tools.
The proof of the remaining range $1\le p<2$, using Proposition~\ref{prop:structural-hypotheses} as an input, and the derivation of \cite[Conjecture~8]{KNY2015} from \cite[Conjecture~7]{KNY2015} were obtained with the assistance of an artificial intelligence system.
These AI-assisted arguments were checked, edited, and incorporated by the authors.
\bibliographystyle{abbrv}
\bibliography{spectral}
\end{document}